\newtheorem{theorem}{Theorem}[section]
\newtheorem*{claim*}{Claim}
\newtheorem*{theorem*}{Theorem}
\newtheorem*{definition*}{Definition}
\newtheorem*{remark*}{Remark}
\newtheorem{corollary}[theorem]{Corollary}
\newtheorem{lemma}[theorem]{Lemma}
\newtheorem{remark}[theorem]{Remark}
\newtheorem{claim}[theorem]{Claim}
\newtheorem{proposition}[theorem]{Proposition}
\newtheorem{definition}[theorem]{Definition}
\newtheorem{question}[theorem]{Question}
\newcommand{\comm}[1]{}
\crefname{theorem}{Theorem}{Theorems}
\crefname{proposition}{Proposition}{Propositions}
\crefname{observation}{Observation}{Observations}
\crefname{lemma}{Lemma}{Lemmas}
\crefname{claim}{Claim}{Claims}
\crefname{problem}{Problem}{Problems}
\crefname{conjecture}{Conjecture}{Conjectures}
\crefname{question}{Question}{Questions}
\crefname{example}{Example}{Examples}
\crefname{fact}{Fact}{Facts}
\newcommand{\local}{\mathsf{LOCAL}}
\newcommand{\Cay}{\operatorname{Cay}}
\newcommand{\fD}{\mathcal{D}}
\newcommand{\fG}{\mathcal{G}}
\newcommand{\fH}{\mathcal{H}}
\newcommand{\N}{\mathbb{N}}
\newcommand{\PI}{\operatorname{Alice}}
\newcommand{\PK}{\operatorname{Bob}}
\def\leukfrac#1/#2{\leavevmode
               \kern.1em
                \raise.9ex\hbox{\the\scriptfont0 ${}_#1$}
                \hskip -1pt\kern-.1em
                /\kern-.15em\lower.10ex\hbox{\the\scriptfont0 ${}_#2$}}
\def\diam{\mathop{\operator@font diam}\nolimits}
\newcommand{\proj}{proj}
\newcommand{\schreier}[3]{Sch(#1,#2,#3)}
\newcommand{\bbg}{\mathbf{\Gamma}}
\newcommand{\bbo}{\mathbf{\Delta}^1_1}
\newcommand{\bp}{\mathbf{\Pi}^1_1}
\newcommand{\oom}{\mathbb{N}^\mathbb{N}}
\newcommand{\bs}{\mathbf{\Sigma}^1_1}
\newcommand{\om}{\mathbb{N}}
\newcommand{\omm}{[\mathbb{N}]^\mathbb{N}}
\newcommand{\concatt}{%
	\mathbin{\raisebox{1ex}{\scalebox{.7}{$\frown$}}}%
}
\newcommand{\mc}{\mathcal}
\newcommand{\mb}{\mathbf}
\newcommand{\HOM}{\operatorname{\bf Hom}}
\newcommand{\AD}{\mathtt{AD}}
\newcommand{\DC}{\mathtt{DC}_{\aleph_0}}
\newcommand{\PD}{\mathtt{PD}}
\newcommand{\Homac}{\HOM^{ac}}
\newcommand{\Homed}{\HOM^{e}}
\newcommand{\Deltachrom}{\chi_{\Game \mathbf{\Delta}^1_1}}
\newcommand{\Edgelabel}[1]{el\chi(#1)}
\newcommand{\Edgelabeldelta}[1]{{el\chi}_{\Game \mathbf{\Delta}^1_1}(#1)}
\newcommand{\Root}{Root}
\newenvironment{customthm}[1]
{\innercustomthm}
{\endinnercustomthm}
\title{\vspace{-1cm} On Homomorphism Graphs}
\definecolor{pastelred}{rgb}{1.0, 0.41, 0.38}
\definecolor{pastelblue}{rgb}{0.52, 0.63, 0.94}
\definecolor{pastelyellow}{rgb}{0.99, 0.99, 0.59}
\definecolor{pastelgreen}{rgb}{0.47, 0.87, 0.47}
\definecolor{pastelorange}{rgb}{1.0, 0.7, 0.28}
\begin{document}
	
	\newcommand*\samethanks[1][\value{footnote}]{\footnotemark[#1]}
	
	\author{
		Sebastian Brandt \\
		\small CISPA Helmholtz Center for Information Security \\
		\small \texttt{brandt@cispa.de}\\
		\and 
		Yi-Jun Chang\thanks{Supported by  Dr.~Max R\"{o}ssler, by the Walter Haefner Foundation, and by the ETH Z\"{u}rich Foundation.} \\
		\small National University of Singapore \\
		\small \texttt{cyijun@nus.edu.sg} \\
		\and 
		Jan Greb\'{i}k\thanks{Supported by Leverhulme Research Project Grant RPG-2018-424. Main part of this work was carried out while affiliated with \emph{University of Warwick.}} 
		\\
		\small Masaryk University and UCLA\\
		\small \texttt{grebikj@math.ucla.edu}\\
		\and 
		Christoph Grunau\thanks{Supported by the European Research Council (ERC) under the European
			Unions Horizon 2020 research and innovation programme (grant agreement No.~853109). 
		}\\
		\small ETH Z\"{u}rich \\
		\small \texttt{cgrunau@inf.ethz.ch}\\
		\and 
		V\'{a}clav Rozho\v{n}\samethanks \\
		\small ETH Z\"{u}rich \\
		\small \texttt{rozhonv@ethz.ch}\\
		\and 
		Zolt\'{a}n Vidny\'{a}nszky\thanks{Partially supported by the
			National Research, Development and Innovation Office
			-- NKFIH, grants no.~113047, no.~129211 and FWF Grant M2779.  Main part of this work was carried out while affiliated with \emph{Caltech}.}\\
		\small E\"otv\"os University, Budapest\\
		\small \texttt{zoltan.vidnyanszky@ttk.elte.hu}\\}
	
	\insert\footins{\footnotesize{MSC codes: Primary 03E15, 05C15; Secondary 28A05}}
	\insert\footins{\footnotesize{Key Words: $\Sigma^1_2$-complete, Borel chromatic number, bounded degree, Borel graph, Brooks' theorem}}

	\date{}
	
	\maketitle 
	\pagenumbering{gobble}   
	
	\vspace{-1cm}
	\begin{abstract}
		
		We introduce a new type of examples of bounded degree acyclic Borel graphs and study their combinatorial properties in the context of descriptive combinatorics, using a generalization of the determinacy method of Marks \cite{DetMarks}.
		The motivation for the construction comes from the adaptation of this method to the $\local$ model of distributed computing \cite{brandt_chang_grebik_grunau_rozhon_vidnyaszky2021LCLs_on_trees_descriptive}. Our approach unifies the previous results in the area, as well as produces new ones.
		In particular, strengthening the main result of \cite{todorvcevic2021complexity}, we show that for $\Delta>2$ it is impossible to give a simple characterization of acyclic $\Delta$-regular Borel graphs with Borel chromatic number at most $\Delta$: such graphs form a $\mathbf{\Sigma}^1_2$-complete set.
		This implies a strong failure of Brooks'-like theorems in the Borel context. 
	\end{abstract}

	\pagenumbering{arabic}

	\section{Introduction}

	\emph{Descriptive combinatorics} is an area concerned with the investigation of combinatorial problems on infinite graphs that satisfy additional regularity properties (see, e.g., \cite{pikhurko2021descriptive_comb_survey,kechris_marks2016descriptive_comb_survey} for surveys of the most important results). In recent years, the study of such problems revealed a deep connection to other areas of mathematics and computer science.
	The most relevant to our study are the connections with the so-called $\local$ model from the area of distributed computing.
	There are several recent results that use distributed computing techniques in order to get results either in descriptive combinatorics \cite{Bernshteyn2021LLL,Bernshteyn2021local=cont,brandt_chang_grebik_grunau_rozhon_vidnyaszky2021LCLs_on_trees_descriptive,elek2018qualitative,grebik_rozhon2021LCL_on_paths}, or in the theory of random processes \cite{HolroydSchrammWilson2017FinitaryColoring,grebik_rozhon2021toasts_and_tails}. 
	
	The starting point of our work was the investigation of the opposite direction. Namely, our aim was to adapt the celebrated \emph{determinacy technique} of Marks \cite{DetMarks} to the $\local$ model of distributed computing. In order to perform the adaptation (which is indeed possible, see our conference paper \cite{brandt_chang_grebik_grunau_rozhon_vidnyaszky2021LCLs_on_trees_descriptive}\footnote{The connection between the current paper and the conference paper \cite{brandt_chang_grebik_grunau_rozhon_vidnyaszky2021LCLs_on_trees_descriptive} is the following.
		The latter paper builds a theory of local problems on trees from several perspectives and aims to a broader audience.
		The original version of this paper should have been a journal version of some results from \cite{brandt_chang_grebik_grunau_rozhon_vidnyaszky2021LCLs_on_trees_descriptive} aiming to people working in descriptive combinatorics.
		In the end, we added several new applications of our method that cannot be found in \cite{brandt_chang_grebik_grunau_rozhon_vidnyaszky2021LCLs_on_trees_descriptive}.}), we had to circumvent several technical hurdles that, rather surprisingly, lead to the main objects that we study in this paper, \emph{homomorphism graphs} (defined in \cref{s:hom}). 
	We refer the reader to \cite{brandt_chang_grebik_grunau_rozhon_vidnyaszky2021LCLs_on_trees_descriptive} for a detailed discussion of the concepts and their connections to the $\local$ model.

	Before we state our results, we recall several basic notions and facts.
	A \emph{graph} $G$ on a set $X$ is a symmetric subset of $X^2 \setminus \{(x,x):x \in X\}$. We will refer to $X$ as the \emph{vertex set of $G$}, in symbols $V(G)$, and to $G$ as the \emph{edge set}. If $n \in \{1,2,\dots,\aleph_0\}$, a \emph{(proper) $n$-coloring of $G$} is a mapping $c:V(G) \to n$ such that $(x,y) \in G \implies c(x) \neq c(y)$. The chromatic number of $G$, $\chi(G)$ is the minimal $n$ for which an $n$-coloring exists. If $G$ and $H$ are graphs, a \emph{homomorphism} from $G$ to $H$ is a mapping $c:V(G) \to V(H)$ that preserves edges. Note that $\chi(G) \leq n$ if and only if $G$ admits a homomorphism to the complete graph on $n$ vertices, $K_n$.
	We denote by $\Delta(G)$ the supremum of the vertex degrees of $G$. 
	In what follows, we will only consider graphs with degrees bounded by a finite number, unless explicitly stated otherwise. A graph is called \emph{$\Delta$-regular} if every vertex has degree $\Delta$. It is easy to see that $\chi(G) \leq \Delta(G)+1$.
	Moreover, Brooks' theorem states that this inequality is sharp only in trivial situations: if $\Delta(G)>2$, it happens if and only if $G$ contains a complete graph on $\Delta(G)+1$ vertices, and if $\Delta(G)=2$, it happens if and only if $G$ contains an odd cycle.
	
	We say that $G$ is a \emph{Borel} graph if $V(G)$ is a standard Borel space, see \cite{kechrisclassical}, and the set of edges of $G$ is a Borel subset of $V(G)\times V(G)$ endowed with the product Borel structure.
	The \emph{Borel chromatic number, $\chi_B(G)$}, of $G$ is defined as the minimal $n$ for which a Borel $n$-coloring exists, here we endow $n$ with the trivial Borel structure.
	Similar concepts are studied when we relax the notion of Borel measurable to merely \emph{measurable} with respect to some probability measure, or \emph{Baire} measurable with respect to some compatible Polish topology.
	
	It has been shown by Kechris-Solecki-Todor\v{c}evi\'c \cite{KST} that $\chi_B(G) \leq \Delta(G)+1$, and it was a long standing open problem, whether Brooks' theorem has a literal extension to the Borel context, at least in the case $\Delta(G)>2$. For example, it has been proved by Conley-Marks-Tucker-Drob \cite{conleymarks} that in the measurable or Baire measurable setting the answer is affirmative. Eventually, this problem has been solved by Marks 
	\cite{DetMarks}, who showed the existence of $\Delta$-regular acyclic Borel graphs with Borel chromatic number $\Delta+1$. Remarkably, this result relies on Martin's Borel determinacy theorem, one of the cornerstones of modern descriptive set theory.
	
	\subsection*{Results}
	
	First let us give a high-level overview of our new method for constructing Borel graphs (for the precise definition of the notions discussed below see \cref{s:hom}). Fix a $\Delta>2$. To a given Borel graph $\mathcal{H}$ we will associate a $\Delta$-regular acyclic Borel graph $\Homac(T_\Delta,\mathcal{H})$. Roughly speaking, the vertex set of the graph will be a collection of pairs $(x,h)$, where $x \in V(\mathcal{H})$ and $h$ is a homomorphism from the $\Delta$-regular infinite rooted tree $T_\Delta$ to $\mathcal{H}$ that maps the root to $x$, and $(x,h)$ is adjacent to $(x',h')$ if $h'$ is obtained from $h$ by moving the root to a neighbouring vertex.
	
	The main idea is that the we can use the combinatorial properties of $\mathcal{H}$ to control the properties of $\Homac(T_\Delta,\mathcal{H})$. Most importantly, we will argue that from a Borel $\Delta$-coloring $c$ of $\Homac(T_\Delta,\mathcal{H})$ we can construct a $\Delta$-coloring of $\mathcal{H}$: to each $x$ we associate games analogous to the ones developed by Marks, in order to select one of the sets $\{h:c(x,h)=i\}$ for $i \leq \Delta$  (in some sense, we select the largest), and color $x$ with the appropriate $i$. As this selection will be based on the existence of winning strategies, the coloring of $\mathcal{H}$ will not be Borel. However, it will still be in a class that has all the usual regularity properties (see \cref{s:prel} for the definition of this class and the corresponding chromatic number, $\Deltachrom$). Thus we will be able to prove the following.
	
	\begin{theorem}
		\label{t:implication}
		Let $\mathcal{H}$ be a locally countable Borel graph.
		Then we have 
		$$\Deltachrom(\mathcal{H})>\Delta \ \Rightarrow \ \chi_B(\Homac(T_\Delta,\mathcal{H}))>\Delta.$$
		In particular,  $\Deltachrom(\mathcal{H})>\Delta$ holds if the Ramsey measurable (if $V(\fH)=[\N]^\N$), Baire measurable, or $\mu$ measurable chromatic number of $\mathcal{H}$ is $>\Delta$.
	\end{theorem}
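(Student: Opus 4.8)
The plan is to construct, for each point $x \in V(\mathcal{H})$ and each color $i \leq \Delta$, a two-player game $\mathrm{Game}(x,i)$ — modeled on Marks's coloring games — whose winner encodes whether color $i$ is ``available'' at $x$ in a way that is consistent across neighbors, and then to argue that a Borel $\Delta$-coloring $c$ of $\Homac(T_\Delta, \mathcal{H})$ would let one of the players win, producing a $\Deltachrom$-measurable $\Delta$-coloring of $\mathcal{H}$ and hence contradicting the hypothesis $\Deltachrom(\mathcal{H}) > \Delta$. In the game for a vertex $x$, one player (say Alice) builds, branch by branch, an infinite homomorphism $h \colon T_\Delta \to \mathcal{H}$ with $h(\text{root}) = x$, while the other player (Bob) tries to ``escape'' a target color class; the moves are choices of neighbors in $\mathcal{H}$, so a run of the game is exactly a vertex $(x,h)$ of $\Homac(T_\Delta,\mathcal{H})$. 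The key design feature — this is the crux — is that neighboring vertices in $\mathcal{H}$ must get different colors, so the games at adjacent vertices $x, x'$ must be coupled: the strategies must be arranged so that one cannot simultaneously ``claim'' color $i$ at $x$ and at $x'$ when $(x,x') \in \mathcal{H}$. This is where the pigeonhole over the $\Delta$ colors against the $\Delta$-regularity of $T_\Delta$ enters, exactly as in Marks's original argument, but now relativized to the parameter $\mathcal{H}$.

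First I would set up the games precisely (deferring to \cref{s:hom} for $\Homac$): fix $x$, and for each color $i$ define $\mathrm{Game}(x,i)$ so that Alice wins iff she can build $h$ with $h(\text{root})=x$ avoiding $c^{-1}(i)$ along a designated branch while maintaining the homomorphism constraint; the payoff set is Borel (it is a countable Boolean combination of conditions on $c$, which is Borel, and on the moves), so Martin's Borel determinacy theorem applies and exactly one player has a winning strategy. Second, I would show the ``consistency lemma'': for a fixed $x$, it is impossible for Bob to win $\mathrm{Game}(x,i)$ for all $i \leq \Delta$ simultaneously — otherwise, running all $\Delta$ of Bob's strategies against a single suitably interleaved play of Alice along the $\Delta$ subtrees hanging off the root produces a homomorphism $h$ with $c(x,h) \notin \{1,\dots,\Delta\}$, impossible since $c$ is a $\Delta$-coloring. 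Hence for each $x$ there is a least $i = i(x)$ with Alice winning $\mathrm{Game}(x,i)$; set $c_\mathcal{H}(x) := i(x)$.

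Third, I would verify that $c_\mathcal{H}$ is a proper coloring of $\mathcal{H}$: if $(x,x') \in \mathcal{H}$ and $i(x) = i(x') = i$, then Alice has winning strategies in both $\mathrm{Game}(x,i)$ and $\mathrm{Game}(x',i)$; playing these against each other along the edge of $T_\Delta$ connecting the two roots — using that moving the root across an edge is exactly the adjacency relation in $\Homac(T_\Delta,\mathcal{H})$ — yields adjacent vertices $(x,h)$ and $(x',h')$ of $\Homac(T_\Delta,\mathcal{H})$ with $c(x,h) = c(x',h') = i$, contradicting that $c$ is a coloring. Fourth, I would check the complexity bound: the predicate ``Alice wins $\mathrm{Game}(x,i)$'' is of the form ``there exists a strategy (a real) such that for all plays (a real) the outcome is winning,'' i.e.\ $\mathbf{\Sigma}^1_2$, and by Borel determinacy its complement ``Bob wins'' is also $\mathbf{\Sigma}^1_2$; the standard relativization/absoluteness argument (Martin's proof of Borel determinacy is provable in a weak theory) places the truth value in the class weakly provably $\mathbf{\Delta}^1_2$, so $c_\mathcal{H}$ witnesses $\Deltachrom(\mathcal{H}) \leq \Delta$. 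Taking the contrapositive gives the implication. Finally, for the ``in particular'' clause, I would invoke that each of the Ramsey-measurable (on $[\N]^\N$), Baire-measurable, and $\mu$-measurable $\sigma$-algebras is closed under the operations used and contains all weakly provably $\mathbf{\Delta}^1_2$ sets, so $\Deltachrom(\mathcal{H}) \leq \Delta$ forces the corresponding measurable chromatic number to be $\leq \Delta$ as well; contrapositive again.

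I expect the main obstacle to be the coupling of the games across an edge of $\mathcal{H}$ while keeping everything Borel and the strategies ``playable'' simultaneously — in Marks's setting the two games live on a free product and the decomposition of $T_\Delta$ into the two subtrees is clean, whereas here the homomorphism $h$ must land in $\mathcal{H}$, so Alice's legal moves are constrained by $\mathcal{H}$'s edge set and one has to check that the interleaved play still produces a genuine vertex of $\Homac(T_\Delta,\mathcal{H})$ rather than getting stuck. Making the ``consistency lemma'' and the ``proper coloring'' step go through will require choosing the branch/subtree bookkeeping in the game definition carefully so that $\Delta$-regularity of $T_\Delta$ supplies exactly enough independent arenas; once that combinatorial skeleton is right, the determinacy and complexity bookkeeping are routine.
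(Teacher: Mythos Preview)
Your overall plan matches the paper's proof---define Marks-type games indexed by $(x,i)$ whose runs are homomorphisms $h\colon T_\Delta\to\mathcal{H}$ with $\Root(h)=x$, invoke Borel determinacy, read off a coloring of $\mathcal{H}$ from the winner pattern, and verify it is weakly provably $\mathbf{\Delta}^1_2$---but you have the roles of the two players inverted in a way that breaks both key steps. In the paper (as in Marks), the player controlling the \emph{single} $\alpha_i$-subtree is the one who wins when $c(h)\neq i$; the pigeonhole step says this player cannot win all $\Delta$ games, since merging her $\Delta$ single-subtree strategies would build an $h$ with $c(h)\neq i$ for every $i$. Hence the \emph{other} player, who controls the remaining $\Delta-1$ subtrees and wins when $c(h)=i$, wins some $\mathbb{G}(x,i)$, and one sets $d(x)$ to that $i$. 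The edge step then plays two of \emph{this} player's strategies against each other across an edge $(x,x')$ of $\mathcal{H}$, producing adjacent $h,h'$ with $c(h)=c(h')=i$. You instead color $x$ by the least $i$ for which Alice (your ``$c(h)\neq i$'' player) wins, and then combine \emph{Alice's} strategies at adjacent vertices---but that only yields $c(h)\neq i$ and $c(h')\neq i$, which is not a contradiction; your claimed conclusion $c(x,h)=c(x',h')=i$ simply does not follow from your setup. Your consistency lemma has the same inversion: combining strategies of the player who forces $c(h)=i$ would give the (equally impossible) $c(h)=i$ for all $i$, not $c(x,h)\notin\{1,\dots,\Delta\}$.

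There is also a technical point you do not address at all: the run of a game is a priori an element of $\HOM(T_\Delta,\mathcal{H})$, not of the free part $\Homac(T_\Delta,\mathcal{H})$, so $c(h)$ may be undefined and the payoff set is not the Borel set you claim. The paper deals with this by first constructing a Borel \emph{anti-game labeling} on the non-free part (\cref{l:cyclic}), extending the hypothetical Borel $\Delta$-coloring of $\Homac$ to a Borel anti-game labeling of all of $\HOM(T_\Delta,\mathcal{H})$, and then running the entire game argument against anti-game labelings rather than colorings. This reduction is not routine and needs to be present in your proof.
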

	
	Next we list the applications.
	In each instance we use a version of \cref{t:implication} for a carefully chosen target graph $\mathcal{H}$.
	These graphs come from well-studied contexts of descriptive combinatorics, namely, \emph{Ramsey property} and \emph{Baire category}.

	\paragraph{a) Complexity result.}
	We apply homomorphism graphs in connection to projective complexity and Brooks' theorem. One might conjecture that the right generalization of Brooks' theorem to the Borel context is that Marks' examples serve as the analogues of the complete graph, i.e., whenever $G$ is a Borel graph with $\chi_B(G)=\Delta(G)+1$, then $G$ must contain a Borel homomorphic copy of the corresponding example of Marks. Note that in the case $\Delta(G)=2$ this is the situation, as there is a Borel analogue of odd cycles that admits a homomorphism into each Borel graph $G$ with $\chi_B(G)>2$ (see \cite{benen}). 
	
	In \cite{todorvcevic2021complexity} it has been shown that it is impossible to give a simple characterization of acyclic Borel graphs with Borel chromatic number $\leq 3$. The construction there was based on a Ramsey theoretic statement, the Galvin-Prikry theorem \cite{galvin1973borel}. An important weakness of that proof is that it uses graphs of finite but unbounded degrees. Using the homomorphism graph combined with the method developed in \cite{todorvcevic2021complexity} and Marks technique, we obtain the analogous result for bounded degree graphs. 
	
	\begin{theorem}
		\label{t:mainc} For each $\Delta>2$ the family of $\Delta$-regular acyclic Borel graphs with Borel chromatic number $\leq \Delta$ has no simple characterization, namely, it is $\mathbf{\Sigma}^1_2$-complete.
	\end{theorem}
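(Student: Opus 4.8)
\textbf{Membership in $\mathbf{\Sigma}^1_2$.} Fix a standard Borel space $Z$ of codes for Borel subsets of $X\times X$, where $X$ is a fixed uncountable standard Borel space, and let $z\mapsto G_z$ be the induced assignment of Borel graphs. The conditions that $G_z$ be a (symmetric, irreflexive) graph, that it be $\Delta$-regular, and that it be acyclic are all analytic or co-analytic in $z$: acyclicity, for instance, asserts that no finite tuple of vertices spans a cycle, which is $\mathbf{\Pi}^1_1$. The assertion $\chi_B(G_z)\le\Delta$ says that there is a code $d\in Z$ for a Borel function $X\to\Delta$ whose graph is disjoint from $G_z$; for fixed $(z,d)$ this inner statement is $\mathbf{\Pi}^1_1$, so $\chi_B(G_z)\le\Delta$ is $\mathbf{\Sigma}^1_2$, and intersecting with the (co-)analytic side conditions above keeps us in $\mathbf{\Sigma}^1_2$.

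\textbf{Reduction scheme for hardness.} The plan is to reduce a fixed $\mathbf{\Sigma}^1_2$-complete set $S$ to our family through the homomorphism graph. Concretely, I would produce a Borel assignment $z\mapsto\mathcal{H}_z$ of locally countable Borel graphs on $[\N]^\N$ such that (i) $z\mapsto\mathcal{H}_z$ is a Borel reduction of $S$ to $\{z:\chi_B(\mathcal{H}_z)\le\Delta\}$, and (ii) there is no gap between the Borel and the Ramsey-measurable chromatic numbers of $\mathcal{H}_z$, i.e.\ $\chi_B(\mathcal{H}_z)>\Delta$ forces the Ramsey-measurable chromatic number of $\mathcal{H}_z$ to exceed $\Delta$ as well. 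Given such a family, let $G_z$ be a $\Delta$-regularization of $\Homac(T_\Delta,\mathcal{H}_z)$ (which is already an acyclic Borel graph of degree $\le\Delta$): at each vertex of defect degree, attach in a Borel fashion enough pendant locally finite rooted trees to bring every vertex to degree exactly $\Delta$ while preserving acyclicity, arranging that $z\mapsto G_z$ is Borel. If $z\in S$, then $\chi_B(\mathcal{H}_z)\le\Delta$, and precomposing a Borel $\Delta$-coloring $c$ of $\mathcal{H}_z$ with the root-projection $p\colon(x,h)\mapsto x$ — which is a graph homomorphism $\Homac(T_\Delta,\mathcal{H}_z)\to\mathcal{H}_z$, since an edge of the homomorphism graph projects onto an edge of $\mathcal{H}_z$ — gives a Borel $\Delta$-coloring of $\Homac(T_\Delta,\mathcal{H}_z)$; this extends over the pendant trees, each of which carries a Borel distance-to-its-attachment-point function and hence a Borel proper $2$-coloring refining the color already fixed at that point, so $\chi_B(G_z)\le\Delta$. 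If $z\notin S$, then $\chi_B(\mathcal{H}_z)>\Delta$, so by (ii) and \cref{t:implication} we get $\chi_B(\Homac(T_\Delta,\mathcal{H}_z))>\Delta$; since $\Homac(T_\Delta,\mathcal{H}_z)$ is an induced subgraph of $G_z$, restricting a Borel $\Delta$-coloring of $G_z$ would give one of $\Homac(T_\Delta,\mathcal{H}_z)$, so $\chi_B(G_z)>\Delta$.

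\textbf{Building $\mathcal{H}_z$.} Here I would adapt the construction of \cite{todorvcevic2021complexity}, which realizes a $\mathbf{\Sigma}^1_2$-complete set as $\{z:\chi_B(\mathcal{H}_z)\le 3\}$ for graphs $\mathcal{H}_z$ of finite but unbounded degree, keeping everything on $[\N]^\N$ so that the Galvin--Prikry/Ellentuck theory is in force; one must push from $3$ colors to $\Delta$ colors (e.g.\ by a product or padding trick that leaves the relevant Ramsey obstruction intact) and verify property (ii). Property (ii) is exactly where the Ramsey framework pays off: Baire-measurability with respect to the Ellentuck topology coincides with the completely Ramsey property, so a failure of the Ramsey-theoretic side condition defining $S$ translates directly into a lower bound on the Ramsey-measurable chromatic number of $\mathcal{H}_z$, not merely on the Borel one, while an actual solution to the instance yields an honest Borel $\Delta$-coloring. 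This is the same role that Baire category plays in the Baire-measurable applications of \cref{t:implication}, transported to the Ramsey setting.

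\textbf{Main obstacle.} The crux is the construction of $\mathcal{H}_z$ satisfying (i) and (ii) simultaneously: engineering a Ramsey-type Borel graph whose \emph{Borel} $\Delta$-colorability encodes a $\mathbf{\Sigma}^1_2$ statement while its \emph{Ramsey-measurable} $\Delta$-colorability encodes precisely the negation. This is the point where the determinacy-driven implication \cref{t:implication} and the Galvin--Prikry machinery of \cite{todorvcevic2021complexity} have to be fused; by comparison, the $\mathbf{\Sigma}^1_2$ upper bound, the $\Delta$-regularization gadget, and the Borelness of all assignments are routine bookkeeping.
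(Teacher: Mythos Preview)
Your proposal has a genuine gap, and it is precisely the one the paper flags explicitly before its own proof. You want a Borel family $z\mapsto\mathcal{H}_z$ with the property that $z\notin S$ forces the \emph{Ramsey-measurable} (or weakly provably $\mathbf{\Delta}^1_2$) chromatic number of $\mathcal{H}_z$ to exceed $\Delta$, so that \cref{t:implication} applies fiberwise. But the $\mathbf{\Sigma}^1_2$-hardness in \cite{todorvcevic2021complexity} is manufactured by a \emph{uniformization} argument: the bad instances are, roughly, disjoint unions over a second parameter of restrictions of the shift graph, each of which admits a Borel $3$-coloring individually; what fails is the existence of a \emph{Borel selector} of such colorings. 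There is no reason the selector should fail to exist in the larger class of weakly provably $\mathbf{\Delta}^1_2$ (or Ramsey-measurable) maps---indeed, choice-type obstructions of this kind typically disappear once one leaves the Borel world---so your condition (ii) is exactly the unjustified step. The paper says this outright: ``the graphs constructed in \cite{todorvcevic2021complexity} only have large Borel chromatic numbers, at least a priori.''

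The paper's fix is not to strengthen (ii) but to change the order of operations. Instead of first producing target graphs $\mathcal{H}_z$ and then applying $\Homac(T_\Delta,\cdot)$, it applies $\Homac(T_\Delta,\cdot)$ once to the shift graph $\mathcal{G}_S$ and runs the uniformization machinery of \cite{todorvcevic2021complexity} \emph{at the level of the homomorphism graph}. The only place \cref{t:implication} enters is the single fact $\chi_B(\Homac(T_\Delta,\mathcal{G}_S))=\Delta+1$ (\cref{c:shiftchrom}), which uses the Ramsey property of $\mathcal{G}_S$; the passage from nicely $\mathbf{\Sigma}^1_1$-hard to $\mathbf{\Sigma}^1_2$-hard is then handled entirely by the abstract Theorem~1.6 of \cite{todorvcevic2021complexity} (quoted here as \cref{t:maincomplex}), with no further appeal to regularity properties beyond Borel. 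Your $\Delta$-regularization step and the $\mathbf{\Sigma}^1_2$ upper bound are fine and match the paper.
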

	From this we deduce a strong negative answer to the conjecture described above. 
	\begin{corollary}
		Brooks' theorem has no analogue for Borel graphs in the following sense.
		Let $\Delta>2$.
		There is no countable family $\{\mathcal{H}_i\}_{i \in I}$ of Borel graphs such that for any  Borel graph $\mathcal{G}$ with $\Delta(\mathcal{G})\leq \Delta$ we have $\chi_B(\mathcal{G})>\Delta$ if and only if for some $i \in I$ the graph $\mathcal{G}$ contains a Borel homomorphic copy of $\mathcal{H}_i$. 
	\end{corollary}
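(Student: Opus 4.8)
The plan is to derive the corollary formally from \cref{t:mainc} via a computation of projective complexity; no further combinatorial input is needed. I argue by contradiction: assume there is a countable family $\{\mathcal{H}_i\}_{i\in I}$ with the stated property. Work in the standard coding of $\Delta$-regular acyclic (bounded-degree) Borel graphs by reals, and let $C$ be the set of codes of such graphs $\mathcal{G}$ with $\chi_B(\mathcal{G})\le\Delta$; by \cref{t:mainc}, $C$ is $\mathbf{\Sigma}^1_2$-complete.

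The one step with actual content is to check that, for each fixed Borel graph $\mathcal{H}_i$, the relation ``$\mathcal{G}$ contains a Borel homomorphic copy of $\mathcal{H}_i$'' is $\mathbf{\Sigma}^1_2$ in the code of $\mathcal{G}$. A Borel map $f\colon V(\mathcal{H}_i)\to V(\mathcal{G})$ between standard Borel spaces can itself be coded by a real (e.g.\ by Borel codes for the $f$-preimages of a fixed countable family generating the Borel structure of $V(\mathcal{G})$); the set of reals that are genuine such codes is $\mathbf{\Pi}^1_1$, and the graph relation ``$y=f(x)$'' is, uniformly on genuine codes, $\mathbf{\Delta}^1_1$ in the code, $x$ and $y$. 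The assertion that $f$ is a homomorphism, namely $\forall (x,y)\in E(\mathcal{H}_i)\ \bigl(f(x),f(y)\bigr)\in E(\mathcal{G})$ — and, should ``copy'' be read as requiring injectivity, the further clause $\forall x\,\forall x'\ (x\ne x'\to f(x)\ne f(x'))$ — then only adds universal real quantifiers over such matrices, so it is $\mathbf{\Pi}^1_1$ in the codes of $f$ and of $\mathcal{G}$. Prefixing the existential quantifier over the code of $f$ yields a $\mathbf{\Sigma}^1_2$ relation. Since $I$ is countable and $\mathbf{\Sigma}^1_2$ is closed under countable unions, the disjunction ``$\exists i\in I$: $\mathcal{G}$ contains a Borel homomorphic copy of $\mathcal{H}_i$'' is again $\mathbf{\Sigma}^1_2$ in the code of $\mathcal{G}$.

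The rest is formal. Among codes of $\Delta$-regular acyclic Borel graphs, the assumed characterization identifies the complement of $C$ — the codes of such $\mathcal{G}$ with $\chi_B(\mathcal{G})>\Delta$ — with the set of $\mathcal{G}$ that contain a Borel homomorphic copy of some $\mathcal{H}_i$, which by the previous paragraph is $\mathbf{\Sigma}^1_2$; hence $C$ is $\mathbf{\Pi}^1_2$. But a $\mathbf{\Sigma}^1_2$-complete set cannot be $\mathbf{\Pi}^1_2$: were it so, the completeness reductions would show every $\mathbf{\Sigma}^1_2$ set is $\mathbf{\Pi}^1_2$, i.e.\ $\mathbf{\Sigma}^1_2=\mathbf{\Pi}^1_2$, contradicting the strictness of the projective hierarchy (a theorem of $\mathsf{ZFC}$). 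This contradiction proves the corollary. I expect the only delicate point to be the bookkeeping in the second paragraph — fixing a convenient coding of Borel maps and verifying the $\mathbf{\Pi}^1_1$ bounds on ``$y=f(x)$'' and on being a homomorphism; everything else is a short formal deduction.
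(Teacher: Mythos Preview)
Your proposal is correct and follows exactly the route the paper takes: in the proof of \cref{t:mainc} the authors simply remark that ``one can check that if there was a collection $\{\mathcal{H}_i:i \in I\}$ as above, then this would yield that the set $S$ is $\mathbf{\Pi}^1_2$,'' and your argument is precisely this check spelled out in detail. The only point you leave implicit is that the ambient set of codes of $\Delta$-regular acyclic Borel graphs is itself at worst $\mathbf{\Pi}^1_2$, which is needed to pass from ``relative complement is $\mathbf{\Sigma}^1_2$'' to ``$C$ is $\mathbf{\Pi}^1_2$''; this is routine and the paper glosses over it as well.
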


	\paragraph{b) Chromatic number and hyperfiniteness.}
	Recall that a Borel graph $\mathcal{G}$ is called \emph{hyperfinite}, if it is the increasing union of Borel graphs with finite connected components.
	In \cite{conleyhyp} the authors examine the connection between \emph{hyperfiniteness} and notions of Borel combinatorics, such as Borel chromatic number and the Lov\'asz Local Lemma.
	Roughly speaking, they show that hyperfiniteness has no effect on Borel combinatorics, for example, they establish the following. 
	
	\begin{theorem}[\cite{conleyhyp}] \label{t:hyperfiniteness}
		There exists a hyperfinite $\Delta$-regular acyclic Borel graph with Borel chromatic number $\Delta+1$.  
	\end{theorem}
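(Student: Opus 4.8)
The plan is to apply \cref{t:implication} with a target graph $\mathcal{H}$ that is, in addition to the usual requirement on its measurable chromatic number, \emph{hyperfinite}. Concretely, I would take for $\mathcal{H}$ (a variant of) the graph used to prove \cref{t:mainc} --- a locally countable Borel graph built from a ``shift''-type map, whose orbit equivalence relation is tail-like and hence hyperfinite, and whose Ramsey measurable (respectively, Baire measurable) chromatic number exceeds $\Delta$. By the last clause of \cref{t:implication}, such an $\mathcal{H}$ satisfies $\Deltachrom(\mathcal{H})>\Delta$, hence $\chi_B(\Homac(T_\Delta,\mathcal{H}))>\Delta$; combined with the Kechris--Solecki--Todor\v{c}evi\'c bound $\chi_B\le\Delta+1$, this gives $\chi_B(\Homac(T_\Delta,\mathcal{H}))=\Delta+1$, and $\Homac(T_\Delta,\mathcal{H})$ is acyclic and Borel of degree at most $\Delta$ by construction (\cref{s:hom}). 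To upgrade ``degree $\le\Delta$'' to ``$\Delta$-regular'' one attaches, at each vertex of deficient degree, the appropriate number of new rooted $(\Delta-1)$-ary trees; this keeps the graph acyclic, does not lower $\chi_B$ (which stays $\le\Delta+1$ and $>\Delta$, hence equals $\Delta+1$), and preserves hyperfiniteness --- the attached forest is a Borel graph each of whose components is a rooted tree with a Borel-definable root, hence is smooth, and one extends a finite exhaustion of $E_{\Homac(T_\Delta,\mathcal{H})}$ to the enlarged graph by adjoining, at stage $n$, the depth-$\le n$ balls of the attached trees.

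The real content is the claim that $\Homac(T_\Delta,\mathcal{H})$ is hyperfinite when $\mathcal{H}$ is hyperfinite of the above ``shift-like'' form. I would prove this by writing $E_{\Homac(T_\Delta,\mathcal{H})}$ as an increasing union of finite Borel equivalence relations built from a witnessing exhaustion $E_1\subseteq E_2\subseteq\cdots$ of $E_\mathcal{H}$. A vertex of the homomorphism graph is a pair $(x,h)$ with $x\in V(\mathcal{H})$ and $h\colon T_\Delta\to\mathcal{H}$ a homomorphism rooted at $x$; its orbit projects onto the countable $\mathcal{H}$-class of $x$, and one refines this projection using a Borel-uniform finite truncation of $h$. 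Using that the components of the chosen $\mathcal{H}$ carry a canonical grading (a ``tail''/``toast'' structure coming from the shift map), one truncates $h$ to the finite piece that has been ``closed off'' by stage $n$, and declares $(x,h)$ and $(x',h')$ equivalent at stage $n$ when $x\mathrel{E_n}x'$ and the two truncations agree after re-rooting; the grading forces these truncations to range over a finite set and to stabilize along each orbit, which makes the resulting relations finite, Borel, increasing, and exhaustive.

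The main obstacle is exactly this hyperfiniteness-transfer. Hyperfiniteness of $\mathcal{H}$ by itself is \emph{not} sufficient: the ``homomorphism coordinate'' of $\Homac(T_\Delta,\mathcal{H})$ ranges over an uncountable space of maps out of the highly non-amenable tree $T_\Delta$, and for careless targets the homomorphism graph is genuinely non-hyperfinite --- e.g.\ taking for $\mathcal{H}$ a disjoint union of copies of $K_{\Delta+1}$ (trivially hyperfinite, with finite classes) makes the homomorphism graph a version of the space of proper $(\Delta+1)$-colorings of $T_\Delta$ under re-rooting, which is non-hyperfinite (it has positive entropy over the non-amenable tree). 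What rescues the construction is precisely that the finite ``closed-off'' pieces supplied by the grading of the correct target pin the truncations of $h$ to a finite set, so that the non-amenable tree contributes no new complexity; verifying that the Galvin--Prikry target (or its Baire category analogue) simultaneously has all three properties --- local countability, a grading that produces this stabilization, and measurable chromatic number $>\Delta$ --- is then a variant of the argument behind \cref{t:mainc}. Assembling these pieces, $\Homac(T_\Delta,\mathcal{H})$ regularized as above is a hyperfinite $\Delta$-regular acyclic Borel graph of Borel chromatic number $\Delta+1$, recovering the result of \cite{conleyhyp}.
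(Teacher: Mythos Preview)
Your overall strategy is the right one, and you correctly isolate the only nontrivial step: transferring hyperfiniteness from the target $\mathcal{H}$ to the homomorphism graph. But the paper handles this step very differently, and the difference matters.

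The paper does \emph{not} work with $\Homac(T_\Delta,\mathcal{H})$ and the shift graph $\mathcal{G}_S$. Instead it passes to the edge-label-preserving variant $\Homed(T_\Delta,\mathcal{H})$ and takes for $\mathcal{H}$ an edge-labeled version of $\mathbb{G}_0$ (restricted so that every vertex sees every label). The point is \cref{pr:hypfin}: when the target is \emph{acyclic} and the homomorphisms preserve the edge labels, each $h$ is injective, so $\Root$ is injective on every connected component of $\Homed(T_\Delta,\mathcal{H})$. Hyperfiniteness then transfers in one line---pull back a witnessing exhaustion of $\mathcal{H}$ along $\Root$---and $\Delta$-regularity and acyclicity come for free from the same proposition, with no ad hoc tree-attachment needed. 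The chromatic lower bound uses \cref{t:forg0} (the edge-labeled version of \cref{t:implication}) together with the standard Baire-category argument that the edge-labeled chromatic number of this $\mathbb{G}_0$ variant is infinite.

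By contrast, your hyperfiniteness argument for $\Homac(T_\Delta,\mathcal{G}_S)$ has a genuine gap. The shift graph $\mathcal{G}_S$ is locally \emph{infinite} (each $x$ has infinitely many preimages $\{n\}\cup x$), so a homomorphism $h\colon T_\Delta\to\mathcal{G}_S$ is not determined by any finite amount of data, and $\Root$ is uncountable-to-one even after restricting to a single $\mathcal{G}_S$-component. Your ``truncate $h$ to the piece closed off by stage $n$'' step therefore does not produce finitely many possibilities: over any finite $E_n$-class in the target there are still infinitely many partial homomorphisms from a finite subtree of $T_\Delta$, because at each step going ``backwards'' in $\mathcal{G}_S$ there are infinitely many choices. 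The grading on $\mathcal{G}_S$ controls the \emph{target} side, not the branching of $h$. You yourself note that hyperfiniteness of $\mathcal{H}$ alone is insufficient (your $K_{\Delta+1}$ example is apt); the extra structure you invoke does not close this hole. The paper's move to $\Homed$ with an acyclic labeled target is exactly what makes the problem disappear: it forces injectivity of $h$, which is what your truncation argument is implicitly trying to simulate but cannot achieve for $\Homac$ over a locally infinite target.
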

	
	Using homomorphism graphs, we provide a new, short and more streamlined proof of this result.
	In particular, the conclusion about the chromatic number follows from our general result about $\Homed$ (a version of $\Homac$), while to get hyperfiniteness we can basically choose any acyclic hyperfinite graph as a target graph.
	To get both properties at once, we simply pick a variant of the graph $\mathbb{G}_0$ (see \cite[Section 6]{KST}) as our target graph.
	
	\paragraph{c) Graph homomorphism.}
	We also consider a slightly more general context: homomorphisms to finite graphs. Clearly, the $\Delta$-regular examples constructed by Marks do not admit a Borel homomorphism to finite graphs of chromatic number at most $\Delta$, as this would imply that their Borel chromatic number is $\leq \Delta$.
	No other examples of such graphs were known.
	We show the following.

	\begin{restatable}{theorem}{MainBorel}\label{thm:MainBorel}
		For every $\Delta>2$ and every $\ell \leq 2\Delta-2$ there are a finite graph $H$ and a $\Delta$-regular acyclic Borel graph $\fG$ such that $\chi(H)=\ell$ and $\fG$ does not admit Borel homomorphism to $H$. The graph $\mathcal{G}$ can be chosen to be hyperfinite. 
	\end{restatable}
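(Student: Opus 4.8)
The plan is to prove a graph-homomorphism strengthening of \cref{t:implication} for the variant $\Homed(T_\Delta,\mathcal{H})$ and then to run it on a target graph coming from Baire category, in the same way the streamlined proof of \cref{t:hyperfiniteness} proceeds. Fix $\Delta>2$. Recall that for every locally countable Borel graph $\mathcal{H}$ with at least one edge, $\Homed(T_\Delta,\mathcal{H})$ is an acyclic $\Delta$-regular Borel graph --- the degree and acyclicity are built into the tree $T_\Delta$ and do not depend on $\mathcal{H}$ --- and that it is hyperfinite whenever $\mathcal{H}$ is an acyclic hyperfinite Borel graph, exactly as in the reproof of \cref{t:hyperfiniteness}. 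So it suffices to produce, for each $\ell\in\{2,\dots,2\Delta-2\}$, a finite graph $H$ with $\chi(H)=\ell$ and a hyperfinite acyclic locally countable Borel graph $\mathcal{H}$ such that $\Homed(T_\Delta,\mathcal{H})$ admits no Borel homomorphism to $H$; one then takes $\fG:=\Homed(T_\Delta,\mathcal{H})$.

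The core of the argument is a homomorphism analogue of \cref{t:implication}: if $\mathcal{H}$ is locally countable Borel and $H$ is a finite graph with $\chi(H)\le 2\Delta-2$, then any Borel homomorphism $c\colon\Homed(T_\Delta,\mathcal{H})\to H$ gives rise to a weakly provably $\mathbf{\Delta}^1_2$ (hence Baire-measurable) homomorphism $\mathcal{H}\to H$. The proof mimics Marks' game: to each $x\in V(\mathcal{H})$ one attaches a family of two-player, Borel-determined games in which the players jointly build a homomorphism $h\colon T_\Delta\to\mathcal{H}$ sending the root to $x$, sharing control of the $\Delta$ subtrees below the root, and the payoff is read off from $c(x,h)\in V(H)$. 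The ``unfolding'' trick transports winning strategies across each edge $x\sim x'$ of $\mathcal{H}$, and since $c$ is a homomorphism it turns ``the builder can force the color $v$ at $x$'' into ``the builder can force an $H$-neighbor of $v$ at $x'$''; one tunes the payoff families so that the set $S_x\subseteq V(H)$ of builder-forceable colors is non-empty for every $x$ and so that, for $x\sim x'$, the $H$-least element of $S_x$ is $H$-adjacent to that of $S_{x'}$, and then $f(x):=\min S_x$ is a homomorphism $\mathcal{H}\to H$ whose definition quantifies only over strategies. The bound $\chi(H)\le 2\Delta-2$ is exactly what keeps this bookkeeping consistent: after setting aside the subtree used by the unfolding, $\Delta-1$ subtrees remain, the alternation of moves lets each of them carry two color-roles, and a proper coloring of $H$ by $2(\Delta-1)$ colors is precisely the room needed to split $V(H)$ into the parts the games can separately defeat.

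For the target, take $\mathcal{H}$ to be a hyperfinite, acyclic, locally countable Borel graph with $\chi_{\mathrm{Baire}}(\mathcal{H})>2\Delta-2$ --- for instance a suitable variant of $\mathbb{G}_0$, of the type already used in the proof of \cref{t:hyperfiniteness}. For $\ell\in\{2,\dots,2\Delta-2\}$ let $H$ be the $(\ell-2)$-fold iterated Mycielskian of $K_2$ (the Mycielskian raises the chromatic number by one and preserves triangle-freeness), so that $\chi(H)=\ell$ and $H$ is triangle-free; in particular $H$ contains no $K_{\Delta+1}$, so a Borel homomorphism $\Homed(T_\Delta,\mathcal{H})\to H$ cannot exist merely because $\Homed(T_\Delta,\mathcal{H})$ is $\Delta$-regular. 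If such a homomorphism existed, the previous step would yield a Baire-measurable homomorphism $\mathcal{H}\to H$, and composing it with a proper $\ell$-coloring of $H$ would give a Baire-measurable $\ell$-coloring of $\mathcal{H}$, contradicting $\chi_{\mathrm{Baire}}(\mathcal{H})>2\Delta-2\ge\ell$. Thus $\fG:=\Homed(T_\Delta,\mathcal{H})$ is an acyclic $\Delta$-regular hyperfinite Borel graph with $\chi(H)=\ell$ and no Borel homomorphism to $H$.

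The step I expect to be the real obstacle is the second one: lifting Marks' determinacy argument from proper colorings to homomorphisms into an arbitrary $H$, i.e. designing the payoff families so that non-emptiness of the sets $S_x$ and the $H$-adjacency of the colors $\min S_x$ along edges of $\mathcal{H}$ hold at the same time. The subtlety is the familiar one that a winning strategy for a union of payoff sets need not decompose into winning strategies for the individual pieces; resolving this is what forces the two-fold interleaving of the $\Delta-1$ free subtrees, and hence the exact threshold $2\Delta-2$. By contrast, the acyclicity, $\Delta$-regularity and hyperfiniteness of $\Homed(T_\Delta,\mathcal{H})$, and the existence of a hyperfinite Borel graph of large Baire-measurable chromatic number, are either already available or routine.
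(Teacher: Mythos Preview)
Your central lemma is false as stated. You claim that whenever $H$ is finite with $\chi(H)\le 2\Delta-2$, a Borel homomorphism $\Homed(T_\Delta,\mathcal H)\to H$ yields a weakly provably $\mathbf{\Delta}^1_2$ homomorphism $\mathcal H\to H$. Take $H=K_{\Delta+1}$: since $\Delta\ge 3$ we have $\chi(K_{\Delta+1})=\Delta+1\le 2\Delta-2$, and $\Homed(T_\Delta,\mathcal H)$, being a bounded-degree Borel graph of maximum degree $\Delta$, \emph{always} admits a Borel $(\Delta+1)$-coloring by Kechris--Solecki--Todor\v{c}evi\'c. Your lemma would then force every admissible target $\mathcal H$ to have a Baire-measurable $(\Delta+1)$-coloring, contradicting the very $\mathbb G_0$-type graph you intend to use, whose Baire-measurable chromatic number is infinite. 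So no amount of ``tuning the payoff families'' can make the map $x\mapsto\min S_x$ a homomorphism $\mathcal H\to H$ in general; the obstacle you flag in your last paragraph is not a subtlety but an actual impossibility.

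The paper's argument avoids this by \emph{not} attempting to produce a homomorphism $\mathcal H\to H$. Instead it plays games $\mathbb G(x,i,R)$ indexed by all subsets $R\subseteq V(H)$, records for each $x$ the entire pattern of winners (a map into a set of size $2^{\Delta\cdot 2^{|V(H)|}}$), and uses that the \emph{edge-labeled} chromatic number of $\mathcal H$ exceeds this huge bound to find a set on which the pattern is constant and which spans all edge labels. The contradiction is then extracted not from $\chi(H)\le 2\Delta-2$ but from a strictly stronger combinatorial hypothesis on $H$, the property $\Delta$-(*) (two sets $R_0,R_1$ with no edges between them whose complements are $(\Delta-1)$-colorable). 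The paper then builds an explicit graph $H_\Delta$ with $\Delta$-(*) and $\chi(H_\Delta)=2\Delta-2$, and obtains the smaller values of $\ell$ by deleting vertices. Your Mycielskian detour and the claimed direct transfer $\mathcal H\to H$ are the pieces that need to be replaced by this $\Delta$-(*) machinery.
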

	
	This theorem is a step towards the better understanding of Problem 8.12  from \cite{kechris_marks2016descriptive_comb_survey}.
	
	\begin{remark}
		The upper bound $2\Delta-2$ on the chromatic number is implied by the combinatorial condition \emph{almost $\Delta$-colorable}, see \cref{def:DeltaStar}, that is utilized in the generalization of Marks' determinacy technique.
		It is an interesting open problem to determine exactly to what graphs the determinacy argument may be applied.
		
		Recently Cs\'oka and the last author showed that there is no \emph{factor of iid} homomorphism from the $\Delta$-regular tree to finite graphs of arbitrarily large chromatic number using the theory of entropy inequalities.
		This, of course, implies the same result in the Borel setting.
		Observe, however, that there is a factor of iid homomorphism from the $\Delta$-regular tree to examples constructed in this paper as the universal graph $H_\Delta$ that is almost $\Delta$-colorable, see \cref{subsec:measure} and \cref{fig:h3}, contains the complete graph on $\Delta$ vertices.
		This shows that the difference between factor of iid and Borel chromatic numbers extends non-trivially to the question about graph homomorphism.
		The exact relationship between the existence of factor of iid and Borel homomorphisms is wide open.
	\end{remark}

	\paragraph{Roadmap.}

	The paper is structured as follows. In \cref{s:prel} we collect the most important definitions and theorems that are going to be used. Then, in \cref{s:hom} we establish the basic properties of homomorphism graphs and their various modifications. \cref{s:marks} contains Marks' technique's adaptation to our context, while in \cref{s:applications} we prove our main results. We conclude the paper with a couple of remarks in \cref{s:problems}.

	\section{Preliminaries}
	\label{s:prel}
	For standard facts and notations of descriptive set theory not explained here we refer the reader to \cite{kechrisclassical} (see also \cite{moschovakis2009descriptive}).
	
	Given a graph $G$, we refer to maps $V(G) \to S$ and $G \to S$ as \emph{vertex ($S$)-labelings} and \emph{edge ($S$)-labelings}, respectively. An edge labeling is called an \emph{edge coloring}, if incident edges have different labels.
	Let $\mathcal{F}$ be a family of subsets of $V(G)$, and $n \in \{1,2,\dots, \aleph_0\}$. An \emph{$\mathcal{F}$ measurable $n$-coloring} is an $n$-coloring $c$ of $G$ such that $c^{-1}(i) \in \mathcal{F}$ for each $i<n$. Using this notion, we define the \emph{$\mathcal{F}$ measurable chromatic number of $G$}, $\chi_\mathcal{F}(G)$ to be the minimal $n$ for which such a coloring exists. 
	
	We denote by $[S]^\N$ the collection of infinite subsets of the set $S$, and by $S^{<\N}$ the family of finite sequences of elements of $S$. Points of the space $[\N]^\N$ will be identified with their increasing enumeration, making $[\N]^\N$ a $G_\delta$ subset of $
 \N^\N$, and hence the product topology of $\N^\N$ gives rise to a Polish topology on $[\N]^\N$.
	
	Define the \emph{shift-graph (on $[\mathbb{N}]^\mathbb{N}$)}, $\mathcal{G}_S$, by letting $x$ and $y$ be adjacent if $y=x \setminus \min x$ or $x=y \setminus \min y$. The shift-graph has a close connection to the notion of so called Ramsey property: for $s \subset \mathbb{N}$ finite and $A \in [\mathbb{N}]^\mathbb{N}$ with $\max s < \min A$ let $[s,A]=\{B \in [\mathbb{N}]^\mathbb{N}: s \subset B, A \supseteq B \setminus s\}$. A set $S \subseteq [\mathbb{N}]^\mathbb{N}$ is called \emph{Ramsey} if for each set of the form $[s,A]$ there exists $B\in [A]^\N$ such that $[s,B] \cap S= \emptyset$ or $[s,B] \subseteq S$ (see, e.g., \cite{KST,khomskii2012regularity,ramseyspaces} for results on the shift-graph and Ramsey measurability). The following follows from the definition.
	
	\begin{theorem}
		\label{t:galvinprikry}
		The graph $\mathcal{G}_S$ has no Ramsey measurable finite coloring. 
	\end{theorem}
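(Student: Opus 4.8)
The plan is to derive a contradiction from the assumption that $c \colon [\mathbb{N}]^\mathbb{N} \to n$ is a proper colouring of $\mathcal{G}_S$ with $n$ finite and every colour class $c^{-1}(i)$ Ramsey. The idea is to extract, by a standard Galvin--Prikry style iteration over the colour classes, an infinite set on which $c$ is constant, together with a $\mathcal{G}_S$-neighbour of that set lying in the same class, which is absurd.

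First I would run through the colour classes one at a time. Applying the definition of a Ramsey set to $c^{-1}(0)$ and to $[\emptyset,\mathbb{N}] = [\mathbb{N}]^\mathbb{N}$, we obtain $B_0 \in [\mathbb{N}]^\mathbb{N}$ with either $[\emptyset,B_0] \subseteq c^{-1}(0)$ or $[\emptyset,B_0] \cap c^{-1}(0) = \emptyset$. In the first case $[B_0]^\mathbb{N} = [\emptyset,B_0]$ is already $c$-monochromatic; in the second case colour $0$ is not used on $[\emptyset,B_0]$, so we apply the Ramsey property of $c^{-1}(1)$ to $[\emptyset,B_0]$ to get $B_1 \in [B_0]^\mathbb{N}$, and repeat. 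Since there are only $n$ colours, after at most $n$ steps we reach $B = \{b_0 < b_1 < b_2 < \cdots\} \in [\mathbb{N}]^\mathbb{N}$ and an index $i < n$ with $c$ identically $i$ on $[\emptyset,B] = [B]^\mathbb{N}$. Note that only the original Ramsey property of each $c^{-1}(j)$ is invoked, each time for a set of the form $[\emptyset, A]$, so no relativization lemma is needed.

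To finish, observe that $B$ and $B \setminus \min B = \{b_1 < b_2 < \cdots\}$ are both infinite subsets of $B$, hence both lie in $[B]^\mathbb{N}$, so $c(B) = c(B\setminus\min B) = i$; but $B$ is adjacent to $B \setminus \min B$ in $\mathcal{G}_S$ by definition, contradicting that $c$ is a proper colouring. I do not anticipate a real obstacle here: the only mildly delicate point is the bookkeeping in the iteration --- ensuring that at the $k$-th stage we still have a genuine set $[\emptyset,B_{k-1}]$ to which the Ramsey property of $c^{-1}(k)$ applies --- and this is immediate since $B_{k-1} \in [\mathbb{N}]^\mathbb{N}$.
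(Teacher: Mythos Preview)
Your argument is correct and is precisely the standard derivation the paper has in mind: the paper does not spell out a proof at all, merely remarking that the statement ``follows from the definition'' of Ramsey sets, and your iteration through the colour classes followed by the observation that $B$ and $B\setminus\min B$ are adjacent elements of the monochromatic cube $[B]^{\mathbb{N}}$ is exactly that unpacking.
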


	Note that the Galvin-Prikry theorem asserts that Borel sets are Ramsey measurable. However, adapting Marks' technique to our setting will require the usage of families of sets that are much larger than the collection of Borel sets. 
	
	If $T \subseteq \N^{<\N}$ is a nonempty pruned tree, and $A \subseteq \N^\N$, $G(T,A)$ will denote the two-player infinite game on $\N$ with legal positions in $T$ and payoff set $A$. We will call the first player $\PI$ and the second $\PK$, $\PI$ wins the game if the resulting element is in $A$. Note that the Borel Determinacy Theorem \cite{martin} states that one of the players has a winning strategy in $G(T,A),$ whenever $A$ is Borel. 
	
	Recall that a subset of $A$ a Polish space $X$ is in the class $\Game \mathbf{\Delta}^1_1$ if there is some Borel set $B \subset X \times \N^\N$ such that
	$$A=\{x:\PI \text{ has a winning strategy in } G(\N^{<\N},B_x)\},$$
	see \cite{moschovakis2009descriptive}. 
	
	By modifying the payoff sets, it is easy to see the following (see, \cite[p138]{kechrisclassical}).
	\begin{lemma}
		\label{l:gamesection}
		Let $X$ be a Polish space and $B \subseteq X \times \N^\mathbb{N}$ be Borel and $x \mapsto T_x$ be a Borel map such that $T_x$ is a pruned subtree of $\N^{<\N}$. Then the set \[\{x \in X: \text{ $\PI$ has a winning strategy in } G(T_x,B_x)\}\] is $\Game \mathbf{\Delta}^1_1$.
	\end{lemma}
	
	The class $\Game \mathbf{\Delta}^1_1$ enjoys a number of regularity properties.
	\begin{proposition} 
		\label{f:provably}
		Let $X$ be a Polish space. $\Game \mathbf{\Delta}^1_1$ sets
		\begin{enumerate}
			\item form a $\sigma$-algebra,
			\item \label{c:Baireprop} have the Baire property w.r.t. any compatible Polish topology,
			\item \label{c:univmeas} are measurable w.r.t. any Borel probability measure,
			\item \label{c:Ramseyprop} in the case $X=[\mathbb{N}]^\mathbb{N}$ have the Ramsey property.
		\end{enumerate}
	\end{proposition}
	
	Before proving this statement we need to fix an encoding of Borel sets. Let $\mb{BC}(X)$ be a set of Borel codes and sets $\mb{A}(X)$ and $\mb{C}(X)$ with the properties summarized below:

	\begin{proposition} (see \cite[3.H]{moschovakis2009descriptive})
		\label{f:prel}
		\begin{itemize}
			\item $\mb{BC}(X) \in \bp(\oom)$, $\mb{A}(X) \in \bs(\oom \times X)$, $\mb{C}(X) \in \bp(\oom \times X)$,
			\item for $c\in \mb{BC}(X)$ and $x \in X$ we have $(c,x) \in \mb{A}(X) \iff (c,x) \in \mb{C}(X)$,
			\item if $P$ is a Polish space and $B \in \bbo(P \times X)$ then there exists a Borel map $f:P \to \oom$ so that $ran(f) \subset \mb{BC}(X)$ and for every $p \in P$ we have $\mathbf{A}(X)_{f(p)}=B_p$.
		\end{itemize}
		Moreover, in the case $X=(\N^\N)^k$ the sets $\mb{BC}(X)$, $\mb{A}(X)$, and $\mb{C}(X)$ can be described by $\Pi^1_1$, $\Sigma^1_1$, and $\Pi^1_1$ formulas, respectively. 
	\end{proposition}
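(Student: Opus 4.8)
The plan is to reconstruct the standard machinery of a \emph{good system of Borel codes}; the cited reference \cite[3.H]{moschovakis2009descriptive} sets this up, and I would follow that template. First I would fix a countable basis $\{N^X_n\}_{n\in\N}$ of $X$ (say with $N^X_0=X$, and taken to be the recursive clopen basis when $X=(\N^\N)^k$) together with a recursive bijection $\langle\cdot,\cdot\rangle$ coding finite sequences of naturals, and declare a \emph{code} to be an element $\alpha\in\N^\N$, read as a labelled tree: $\alpha(0)\in\{0,1,2\}$ says whether $\alpha$ names the basic set $N^X_{\alpha(1)}$, the complement of the set named by the subcode $(\alpha)_1$, or the union of the sets named by the subcodes $(\alpha)_n$, $n\in\N$, where $(\alpha)_n(k)=\alpha(\langle n,k\rangle)$. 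Let $T_\alpha\subseteq\N^{<\N}$ be the (recursively decidable) tree of finite sequences that can be followed through this unfolding, and put $\mb{BC}(X)=\{\alpha:T_\alpha\text{ is well-founded}\}$. Since ``$T_\alpha$ has no infinite branch'' is $\Pi^1_1$ (lightface, when the basis is recursive), this already gives $\mb{BC}(X)\in\bp$ and the corresponding clause of the ``moreover''.

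Next I would encode membership. Call $f:T_\alpha\to 2$ \emph{consistent at $x$} if it respects the unfolding at every node: value $1$ at a leaf naming $N$ iff $x\in N$; negation at a complement node; and supremum of the $0/1$ values of the children at a union node. Then set $(\alpha,x)\in\mb{A}(X)$ iff there \emph{exists} an $f$ consistent at $x$ with $f(\emptyset)=1$, and $(\alpha,x)\in\mb{C}(X)$ iff \emph{every} $f$ consistent at $x$ has $f(\emptyset)=1$. Coding $f$ as a point of $\N^\N$ and noting that ``$f$ is consistent at $x$'' is arithmetic in $(\alpha,x,f)$ --- and, when $X=(\N^\N)^k$, that ``$x\in N^X_n$'' is a clopen condition in $(x,n)$ --- yields $\mb{A}(X)\in\bs$ and $\mb{C}(X)\in\bp$ together with the stated lightface refinements. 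The equivalence in the second bullet is then the observation that over a well-founded tree a consistent labelling both exists and is unique (build it by recursion on the rank of nodes), so for $\alpha\in\mb{BC}(X)$ the existential and the universal formulations pick out the same $x$'s, namely the points of the Borel set coded by $\alpha$.

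For the parametrised third bullet I would induct on the Borel rank of $B\subseteq P\times X$ --- equivalently, recurse along a product Borel code of $B$. If $B=\bigcup_n(U_n\times N^X_{k_n})$ is open with each $U_n\subseteq P$ open, then $B_p=\bigcup\{N^X_{k_n}:p\in U_n\}$, and I let $f(p)$ be the union-code whose $n$-th child codes $N^X_{k_n}$ if $p\in U_n$ and codes $\emptyset$ otherwise; this $f$ is Borel, has range inside $\mb{BC}(X)$, and satisfies $\mb{A}(X)_{f(p)}=B_p$. At a complement step (resp.\ a countable-union step) I would take the maps already built for the constituents and apply one further complement-constructor (resp.\ union-constructor); the resulting $p\mapsto f(p)$ is again Borel, its range stays in $\mb{BC}(X)$ because prepending a root to countably many well-founded trees produces a well-founded tree, and it codes $B_p$. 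Limit levels of the Borel hierarchy reduce to the countable-union case, and for a fixed $B$ the recursion has countable length, so it terminates.

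The mathematics above is routine; the step I expect to require the most care is the bookkeeping in the third bullet --- checking that every clause genuinely produces a \emph{Borel} (not merely definable) map $p\mapsto f(p)\in\mb{BC}(X)$, which is exactly where one uses that the recursion runs along the fixed countable well-founded code of $B$ and that each node contributes only the Borel test ``$p\in U_n$'' --- together with verifying that all the complexity bookkeeping in the ``moreover'' is \emph{uniform} (lightface), which is automatic once the basis of $(\N^\N)^k$ is taken to be its recursive clopen basis. The full details are carried out in \cite[3.H]{moschovakis2009descriptive}.
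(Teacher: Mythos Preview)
The paper gives no proof of this proposition at all: it is stated with the parenthetical reference to \cite[3.H]{moschovakis2009descriptive} and then simply used. Your proposal is a correct outline of the standard construction carried out in that reference, so there is nothing to compare---you have faithfully sketched exactly the argument the paper is citing.
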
 
	
	Now we can prove \cref{f:provably}.
	\begin{proof}[Proof of \cref{f:provably}]
		The first statement is a consequence of \cite[Lemma 6D.1]{moschovakis2009descriptive} and not very hard too verify.  
		
		In order to see the rest, we rely on results of Feng-Magidor-Woodin \cite{feng1992universally}. Recall that a subset $S$ of $\N^\N$ is \emph{universally Baire} if for every topological space $Y$ that has a basis consisting of regular open sets and every continuous $f:Y \to \N^\N$ the set $f^{-1}(S)$ has the Baire property. 
		
		It is not hard to check the following properties of universally Baire sets (see \cite[Theorem 2.2]{feng1992universally} and the discussion on p212).
		
		\begin{claim}
			\label{cl:propuniv}
			Assume that $S \subseteq \N^\N$ is universally Baire, $X$ is a Polish space and $\varphi:X \to \N^\N$ be an injection.
			\begin{enumerate}
				\item $S$ has the Baire property w.r.t. any compatible Polish topology, is measurable w.r.t. any Borel probability measure and, in case $S \subseteq [\N]^\N$, $S$ has the Ramsey property.
				\item If $X \subseteq \N^\N$ and $\varphi$ is continuous then $\varphi^{-1}(S)$ is universally Baire. 
				\item If $\varphi$ is Borel then $\varphi^{-1}(S)$ has the Baire property w.r.t. any compatible Polish topology, is measurable w.r.t. any Borel probability measure.
			\end{enumerate}
		\end{claim}

		Now we consider a set which encompasses all the $\Game \mathbf{\Delta}^1_1$ sets, and show that it is universally Baire. To ease the notation set $\mb{BC}:=\mb{BC}(\N^\N \times \N^\N)$, $\mb{C}:=\mb{C}(\N^\N \times \N^\N)$ and $\mb{A}:=\mb{A}(\N^\N \times \N^\N)$.
		Note that we have $\mb{BC} \in \bp(\oom)$, $\mb{A} \in \bs(\oom \times (\oom\times \oom))$, $\mb{C} \in \bp(\oom \times (\oom\times\oom))$.
		Consider first the set \[WS=\{(x,c):x \in \N^\N, c \in \mb{BC},\] \[\text{ $\PI$ has a winning strategy in $G(\N^{<\N},(\mb{C}_c)_x)$}\}.\]
		
		\begin{claim}
			\label{cl:WS} $WS$ is universally Baire.
		\end{claim}
		\begin{proof}
			
			By the characterization result of Feng-Magidor-Woodin, a set $Z \subseteq \N^\N$ is universally Baire if and only if there exist a cardinal $\lambda$ and trees $T,T^* \subseteq \N^{<\N} \times \lambda^{<\N}$ such that $Z=proj_{\N^\N}([T])$ and $\N^\N \setminus Z = proj_{\N^\N}([T^*])$ and for every forcing notion $\mathbb{P}$ we have that
            $$\mathbb{P} \Vdash \mathbb{N}^\mathbb{N}=proj_{\N^\N}([T]) 
			\sqcup proj_{\N^\N}([T^*]).$$
			
			We show this condition for $WS$. Note that $S$ is a winning strategy for $\PI$ in $G(\N^{<\N},(\mb{C}_c)_x)$ if and only if $S$ is a strategy for $\PI$ such that $\forall y \ (y \not \in [S] \lor y \in (\mb{C}_c)_x)$. Using the last sentence of \cref{f:prel} this yields that $WS$ can be described by a $\Sigma^1_2$ formula. Moreover, as $\mb{A}_c=\mb{C}_c$ whenever $c \in \mb{BC}$, and $\mb{C}_c$ is Borel, by the Borel determinacy theorem $\PI$ has a winning strategy in $G(\N^{<\N},(\mb{C}_c)_x)$ if and only if $\PK$ has no winning strategy. That is, for every $S$ strategy for $\PK$ we have $\exists y \ (y \in [S] \land y \in (\mb{A}_c)_x)$. This yields a description of $\N^\N \setminus WS$ using a $\Sigma^1_2$ formula. By \cite[2D.3]{moschovakis2009descriptive} there are trees $T,T^* \subseteq \N^{<\N} \times \omega_1^{<\N}$ (together with formulas defining them) such that $WS=proj_{\N^\N}([T])$ and $\N^\N \setminus WS = proj_{\N^\N}([T^*])$ holds in every model of ZFC, showing that the desired property holds.			
		\end{proof}
	
		\begin{claim}
			\label{cl:triv}
			Let $S$ be a $\Game \mathbf{\Delta}^1_1$ subset of a Polish space $X$. Then there is a Borel injection $\varphi$ and some $c \in \N^\N$ such that $S=\varphi^{-1}(WS^{c})$, where $WS^c=\{x\in \mathbb{N}^\mathbb{N}:(x,c)\in WS\}$. If $X$ is zero dimensional then there is even a homeomorphism with such a property. 
		\end{claim}
		\begin{proof}
			Let $B$ be a Borel set in $X \times \N^\N$ such that 
			\[S=\{x \in X: \text{Alice has a winning strategy in $G(\N^{<\N},B_x)$}\}.\]
			Since $X$ is Polish, there exists a Borel injection $\varphi:X \to \N^\N$. Moreover, if $X$ is zero-dimensional, then $\varphi$ can be chosen to be a homeomorphism. Now let $(s,r) \in B' \iff (\varphi^{-1}(s),r) \in B$. Then $B'$ is Borel, and there is some $c \in \mathbf{BC}$ with $\mathbf{C}_c=B'$. Then, by definition we have $S=\varphi^{-1}(WS^{c})$. 
		\end{proof}

		Now let first $S$ be a $\Game\mathbf{\Delta}^1_1$ subset of an arbitrary Polish space $X$. By the claim above, there exist a Borel injection $\varphi$ and a $c$ with $\varphi^{-1}(WS^{c})$. 
		Using \cref{cl:propuniv} $S$ has properties \eqref{c:Baireprop} and \eqref{c:univmeas}. 
		
		Finally, if $X=[\N]^\N$ then $\varphi$ can be taken to be a homeomorphism. Then $S$ is universally Baire and hence Ramsey measurable by \cref{cl:propuniv}.

	\end{proof}

	\begin{remark}
		In the original version of this paper we introduced the family of weakly provably $\mathbf{\Delta}^1_2$ sets, which contains $\Game \mathbf{\Delta}^1_1$ sets, in order to be able to handle definability issues. Subsequently Kastner and Lyons pointed out that these results also follow from the theorems of Feng-Magidor-Woodin. 
		
		In an upcoming work Kastner and Lyons will prove the required regularity properties of the class $\Game \mathbf{\Delta}^1_1$ with a more streamlined, purely game theoretic argument, based on the one given in Kechris \cite{kechris1978forcing}.  

        Finally, all the regularity properties of $\Game \mathbf{\Delta}^1_1$ sets follow from projective determinacy.
		
	\end{remark}

	\section{The homomorphism graph}

	\label{s:hom}
	
	In this section we define the main objects of our study, \emph{homomorphism graphs}, and establish a couple of their properties. 
	
	Let $\Gamma$ be a countable group and $S \subseteq \Gamma$ be a generating set. Assume that $\Gamma\curvearrowright X$ is an action of $\Gamma$ on the set $X$.
	As there is no danger of  confusion we always denote the action with the symbol~$\cdot$.
	The \emph{Schreier graph} $\schreier{\Gamma}{S}{X}$ of such an action is a graph on the set $X$ such that $x \neq x'$ are adjacent iff for some $\gamma \in S \cup S^{-1}$ we have that $\gamma \cdot x=x'$. 
	
	Probably the most important example of a Schreier graph is the (right) \emph{Cayley graph, $\Cay(\Gamma,S)$} that comes from the right multiplication action of $\Gamma$ on itself.
	That is, $g,h\in \Gamma$ form an edge in $\Cay(\Gamma,S)$ if there is $\sigma\in S$ such that $g\cdot \sigma=h$.
	Another example is the graph of the left-shift action of $\Gamma$ on the space $2^\Gamma$: recall that the left-shift action is defined by \[\gamma \cdot x(\delta)=x(\gamma^{-1} \cdot \delta)\]
	for $\gamma \in \Gamma$ and $x \in 2^\Gamma$. 
	Observe that the Schreier graph of this actions is a Borel graph, when we endow the space $2^\Gamma$ with the product topology. 
	
	Our examples will come from a generalization of this graph. First note that if we replace $2$ by any other standard Borel space $X$, the space $X^\Gamma$ still admits a Borel product structure with respect to which the Schreier graph of the left-shift action defined as above, is a Borel graph. 
	The main idea is to start with a Borel graph $\mathcal{H}$ and restrict the corresponding Schreier graph on $V(\mathcal{H})^\Gamma$ to an appropriate subset on which the elements $h \in V(\mathcal{H})^\Gamma$ are graph homomorphisms from $\Cay(\Gamma,S)$ to $\mathcal{H}$. This allows us to control certain properties (such as chromatic number or hyperfiniteness) of the resulting graph by the properties of $\mathcal{H}$. More precisely:
	
	\begin{definition}
		Let $\mathcal{H}$ be a Borel graph and $\Gamma$ be a countable group with a generating set $S$. Let $\HOM(\Gamma,S,\fH)$ be the restriction of $\schreier{\Gamma}{S}{V(\mathcal{H})^\Gamma}$ to the set
		\[\{h \in V(\mathcal{H})^\Gamma: \text{$h$ is a graph homomorphism from $\Cay(\Gamma,S)$ to $V(\mathcal{H}$)}\}.\]
	\end{definition}
	
	We will refer to $\mathcal{H}$ as the \emph{target graph}, and we will denote the map $h \mapsto h(1)$ by $\Root$ (note that the vertices of $\Cay(\Gamma,S)$ are labeled by the elements of $\Gamma$). It is clear from the definition that $\HOM(\Gamma,S,\fH)$ is a Borel graph with degrees at most $|S \cup S^{-1}|$ and that $\Root$ is a Borel map. We can immediately make the following observation.
	
	\begin{proposition}
		\label{pr:chromatic}
		The map $\Root$ is a Borel homomorphism from $\HOM(\Gamma,S,\fH)$ to $\fH$.
		
		In particular, $\chi_B(\HOM(\Gamma,S,\fH)) \leq \chi_B(\mathcal{H})$ and the action of $1 \neq \gamma \in S$ on $\HOM(\Gamma,S,\fH)$ has no fixed-points.
	\end{proposition} 
	\begin{proof}
		Let $h\in \HOM(\Gamma,S,\fH)$ and $1\not= \gamma\in S\cup S^{-1}$.
		Note that as $\gamma^{-1}$ and $1$ are adjacent in $\Cay(\Gamma,S)$ it follows that $\Root(\gamma\cdot h)=h(\gamma^{-1})$ and $\Root(h)=h(1)$ are adjacent in $\mathcal{H}$ as $h$ is a homomorphism.
		Consequently, the map $\Root$ is a Borel homomorphism from $\HOM(\Gamma,S,\fH)$ to $\fH$ and $h\not=\gamma\cdot h$ as there are no loops in $\fH$.
	\end{proof}

	\paragraph{The $\Delta$-regular tree $T_\Delta$.}
	In this paper we only consider the case of the group 
	$$\mathbb{Z}^{*\Delta}_2=\langle \alpha_1,\dots,\alpha_\Delta | \alpha^2_1=\dots=\alpha^2_\Delta=1\rangle$$
	together with the generating set $S_\Delta=\{\alpha_1,\dots,\alpha_\Delta\}$. Since $\Cay(\mathbb{Z}^{*\Delta}_2,S_\Delta)$ is isomorphic to the $\Delta$-regular infinite tree, $T_\Delta$,  we use $\HOM(T_\Delta,\mathcal{H})$ to denote the graph $\HOM(\mathbb{Z}^{*\Delta}_2,S_\Delta,\mathcal{H})$. Note also that we consider $\Cay(\mathbb{Z}^{*\Delta}_2,S_\Delta)$ and $T_\Delta$ equipped with a $\Delta$-edge coloring.
	As suggested above, an equivalent description of the vertex set of $\HOM(T_\Delta,\mathcal{H})$ is that it is the set of pairs $(x,h)$ where $h$ is a homomorphism from the tree $T_\Delta$ to $\fH$ and $x$ is a distinguished vertex of $T_\Delta$, a root.
	Then we have that $(x,h)$ and $(y,g)$ form an ($\alpha$-)edge if and only if $h=g$ and $(x, y)$ is an ($\alpha$-)edge in $T_\Delta$.
	This is because (a) there is a one-to-one correspondence between a homomorphism from $\Cay(\mathbb{Z}^{*\Delta}_2,S_\Delta)$ to $\fH$ and the pairs $(x,h)$, and (b) the shift action $\mathbb{Z}^{*\Delta}_2 \curvearrowright \HOM(\mathbb{Z}^{*\Delta}_2,S_\Delta,\mathcal{H})$ corresponds to changing the root for a fixed homomorphism $h$ from $T_\Delta$ to $\fH$.

	Recall that an action $\Gamma \curvearrowright X$ is \emph{free} if for each $1 \neq \gamma \in \Gamma$ and $x \in X$ we have $\gamma \cdot x \neq x$. The \emph{free part}, denoted by $Free(X)$, is the set $\{x:\forall 1 \neq \gamma \in \Gamma \ \gamma \cdot x \neq x\}$. Note that the left-shift action of $\mathbb{Z}^{*\Delta}_2$ on, say, $\N^{\mathbb{Z}^{*\Delta}_2}$ is not free, in particular, the corresponding Schreier-graph has cycles. To remedy this, Marks used a restriction of the graph to the free part, showing that for each $\Delta > 2$ this graph has Borel chromatic number $\Delta+1$. Analogously, we have the following.
	\begin{definition}
		Let $\Homac(T_\Delta,\mathcal{H})=\HOM(T_\Delta,\mathcal{H}) \restriction Free(V(\mathcal{H})^{\mathbb{Z}^{*\Delta}_2}),$
		that is, the restriction of the graph $\HOM(T_\Delta,\mathcal{H})$ to the free part of the $\mathbb{Z}^{*\Delta}_2$ action.
	\end{definition}
	In our first application we will use this remedy to get acyclic graphs. 
	Note that for each edge in $\Homac(T_\Delta,\mathcal{H})$ there is a unique generator $\alpha\in S_\Delta$ that induces it.
	In particular, the graph $\Homac(T_\Delta,\mathcal{H})$ admits a canonical Borel edge $\Delta$-coloring.
	The following is straightforward.
	
	\begin{proposition}
		Let $\fH$ be a locally countable Borel graph.
		If $\Homac(T_\Delta,\fH)$ is nonempty, then it is $\Delta$-regular and acylic. 
	\end{proposition}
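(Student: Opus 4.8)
The plan is to verify both assertions directly from the definition of $\Homac(T_\Delta,\fH)$ as the restriction of the Schreier graph of the shift action $\Gamma_\Delta\curvearrowright V(\fH)^{\Gamma_\Delta}$ to the set of homomorphisms lying in the free part, using only two facts: that $\Gamma_\Delta=\langle\alpha_1,\dots,\alpha_\Delta\mid\alpha_1^2=\dots=\alpha_\Delta^2=1\rangle$ is the free product of $\Delta$ copies of $\mathbb{Z}/2\mathbb{Z}$, so that a nonempty word in the $\alpha_i$ with no two consecutive letters equal represents a nontrivial element; and that the action is free on $\free(V(\fH)^{\Gamma_\Delta})$.

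For $\Delta$-regularity, fix a vertex $h$ --- a homomorphism from $\Cay(\Gamma_\Delta,S_\Delta)$ to $\fH$ lying in $\free(V(\fH)^{\Gamma_\Delta})$. First I would observe that each $\alpha_i\cdot h$, $1\le i\le\Delta$, is again a vertex of $\Homac(T_\Delta,\fH)$: it is a homomorphism because left translation by $\alpha_i$ permutes the edges of $\Cay(\Gamma_\Delta,S_\Delta)$, and it lies in the free part because $\free(V(\fH)^{\Gamma_\Delta})$ is $\Gamma_\Delta$-invariant (a nontrivial element fixing $\alpha_i\cdot h$ would, after conjugation by $\alpha_i$, fix $h$). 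The neighbours of $h$ in the Schreier graph are exactly the elements of $\{\alpha_i\cdot h:1\le i\le\Delta\}$ distinct from $h$; since $\alpha_i\ne 1$ in $\Gamma_\Delta$, freeness gives $\alpha_i\cdot h\ne h$ for all $i$, and if $\alpha_i\cdot h=\alpha_j\cdot h$ with $i\ne j$ then $\alpha_j\alpha_i\cdot h=h$ while $\alpha_j\alpha_i$ is a reduced word of length $2$, hence $\ne 1$ --- again contradicting freeness. So $h$ has exactly $\Delta$ distinct neighbours. (This also re-derives the unique-generator remark preceding the proposition.)

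For acyclicity, note that $\Homac(T_\Delta,\fH)$ is a simple graph, so it suffices to exclude cycles of length $m\ge 3$. Suppose $h_0,h_1,\dots,h_{m-1},h_m=h_0$ is such a cycle and let $\alpha^{(k)}\in S_\Delta$ be the unique generator with $h_{k+1}=\alpha^{(k)}\cdot h_k$; since $\alpha^{(k)}$ is an involution we also have $h_k=\alpha^{(k)}\cdot h_{k+1}$. The crucial point is that consecutive generators differ: if $\alpha^{(k)}=\alpha^{(k+1)}$ then $h_{k+2}=(\alpha^{(k)})^2\cdot h_k=h_k$, which is impossible in a cycle of length $\ge 3$ since $k$ and $k+2$ are then distinct indices. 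Composing around the cycle yields $w\cdot h_0=h_0$ with $w=\alpha^{(m-1)}\alpha^{(m-2)}\cdots\alpha^{(0)}$ a nonempty word having no two consecutive letters equal; hence $w\ne 1$ in the free product $\Gamma_\Delta$, contradicting $h_0\in\free(V(\fH)^{\Gamma_\Delta})$. So no such cycle exists.

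I expect the only mildly delicate step to be the bookkeeping in the acyclicity argument: making sure that the distinctness of the cycle's vertices really forces $w$ to be reduced (and noting that $m=2$ is excluded by simplicity of the graph rather than by this argument). Everything else --- that translation permutes Cayley edges, that the free part is invariant, and the normal-form theorem for free products --- is standard. Local countability of $\fH$ is not needed for this statement; it is relevant only to the Borel-structural claims made elsewhere.
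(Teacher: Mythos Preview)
Your proposal is correct. The paper does not actually give a proof of this proposition---it is stated with the remark ``The following is straightforward''---and your argument is exactly the natural verification one would supply: freeness of the action forces the $\Delta$ translates $\alpha_i\cdot h$ to be pairwise distinct and distinct from $h$, and any putative cycle yields a reduced word of length $\ge 3$ fixing a point of the free part. Your side observations (that local countability is irrelevant here, and that $m=2$ is excluded by simplicity rather than by the word argument) are also accurate.
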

	
	However, utilizing the homomorphism graph together with an appropriate target graph, we will be able to completely avoid the non-free part, in an automatic manner. This way we will be able to guarantee the hyperfiniteness of the homomorphism graph as well. Recall that $T_\Delta=\Cay(\mathbb{Z}^{*\Delta}_2,S_\Delta)$ comes with a $\Delta$-edge coloring by the elements of $S_\Delta$. Let us consider the a subgraph of the homomorphism graph that arises by requiring $h$ to preserve this information.
	\begin{definition}	
		Assume that the graph $\mathcal{H}$ is equipped with a Borel edge $S_\Delta$-labeling. Let $\Homed(T_\Delta,\fH)$ be the restriction of $\HOM(T_\Delta,\fH)$ to the set
		\[\{h \in V(\HOM(T_\Delta,\fH)): \text{$h$ preserves the edge labels}\}.\]
	\end{definition}
	Clearly, $\Homed(T_\Delta,\fH)$ is also a Borel graph. Note that in the following statement the labeling of the edges of the target graph $\fH$ is typically not a coloring. 
	
	\begin{proposition}
		\label{pr:hypfin} Assume that $\mathcal{H}$ is an acyclic graph equipped with a Borel edge $S_{\Delta}$-labeling and $\Homed(T_\Delta,\mathcal{H})$ is nonempty. Then
		\begin{enumerate}
			\item \label{c:acyclic} $\Homed(T_\Delta,\mathcal{H})$ is acyclic,
			\item \label{c:hypfin} If $\mathcal{H}$ is hyperfinite, then so is $\Homed(T_\Delta,\mathcal{H})$,
			\item \label{c:regular} $\Homed(T_\Delta,\mathcal{H})$ is $\Delta$-regular.
		\end{enumerate}
	\end{proposition}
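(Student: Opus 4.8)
The plan is to isolate one structural fact — that a homomorphism into a forest which respects the $S_\Delta$-edge-labelling is injective — and then read off all three assertions from it together with elementary facts about free actions of $\Gamma_\Delta$. So first I would prove the key lemma: if $h\in V(\Homed(T_\Delta,\mathcal{H}))$, i.e.\ $h$ is a homomorphism from $\Cay(\Gamma_\Delta,S_\Delta)=T_\Delta$ to $\mathcal{H}$ preserving the edge $S_\Delta$-labelling, then $h$ is injective on $\Gamma_\Delta$ (in fact an isometric embedding, carrying geodesics of $T_\Delta$ to the unique paths of $\mathcal{H}$). The mechanism is this: along any geodesic $g_0,g_1,g_2,\dots$ in $T_\Delta$ the labels of consecutive edges are distinct, since every vertex of $T_\Delta$ is incident to exactly one edge of each label and two consecutive equal labels would force $g_{m-1}=g_{m+1}$. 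Because $h$ preserves labels, the $h$-image of a geodesic is a walk in $\mathcal{H}$ in which no two consecutive edges coincide (coinciding consecutive edges would have equal labels); and — $\mathcal{H}$ being a forest — any such non-backtracking walk is a simple path, hence has distinct endpoints. Applying this to the geodesic joining two arbitrary distinct vertices gives injectivity. The only non-routine ingredient here is the elementary statement that a non-backtracking walk in a forest never repeats a vertex, which I would verify by a short induction on the length of the walk.

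Next, the lemma immediately yields that $\Gamma_\Delta$ acts freely on $V(\Homed(T_\Delta,\mathcal{H}))$: if $\gamma\cdot h=h$ with $\gamma\neq 1$ then $h(\gamma^{-1})=(\gamma\cdot h)(1)=h(1)$ while $\gamma^{-1}\neq 1$, contradicting injectivity. (Note that $V(\Homed(T_\Delta,\mathcal{H}))$ is $\Gamma_\Delta$-invariant, since the shift of a label-preserving homomorphism is again one.) Thus $\Homed(T_\Delta,\mathcal{H})$ is the Schreier graph with respect to $S_\Delta$ of a free $\Gamma_\Delta$-action on a $\Gamma_\Delta$-invariant Borel set, hence a disjoint union of copies of $\Cay(\Gamma_\Delta,S_\Delta)=T_\Delta$: on each orbit the map $\gamma\mapsto\gamma\cdot h$ is a bijection onto the orbit intertwining the action with left translation, so the induced subgraph on that orbit is isomorphic to $T_\Delta$. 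Since by hypothesis $\Homed(T_\Delta,\mathcal{H})$ is nonempty, this proves it is acyclic (item~\ref{c:acyclic}) and $\Delta$-regular (item~\ref{c:regular}). If one prefers a hands-on argument: the $\Delta$ neighbours $\alpha_1\cdot h,\dots,\alpha_\Delta\cdot h$ of $h$ are pairwise distinct because $\alpha_i\alpha_j\neq 1$ for $i\neq j$ and the action is free, giving regularity; and any cycle would produce a nonempty cyclically reduced word of $\Gamma_\Delta$ evaluating to $1$, which is impossible, giving acyclicity.

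For hyperfiniteness (item~\ref{c:hypfin}), assume $\mathcal{H}$ is hyperfinite and fix an increasing sequence $(E_n)_n$ of finite Borel equivalence relations on $V(\mathcal{H})$ with $\bigcup_n E_n=E_{\mathcal{H}}$, the connectedness relation of $\mathcal{H}$; write $E$ for the connectedness relation of $\Homed(T_\Delta,\mathcal{H})$, which is Borel since $\Homed(T_\Delta,\mathcal{H})$ is a locally finite Borel graph. By \cref{pr:chromatic} the map $\Root\colon h\mapsto h(1)$ is a Borel graph homomorphism $\Homed(T_\Delta,\mathcal{H})\to\mathcal{H}$, and by the key lemma it is injective on each connected component: a component is an orbit $\{\gamma\cdot h:\gamma\in\Gamma_\Delta\}$, on which $\Root$ acts as $\gamma\mapsto h(\gamma^{-1})$, injective because $h$ is. Now set
\[
F_n=\bigl\{(u,v)\;:\;(u,v)\in E\ \text{ and }\ (\Root(u),\Root(v))\in E_n\bigr\}.
\]
Each $F_n$ is a Borel equivalence relation, the sequence is increasing, and $\bigcup_n F_n=E$ because $\Root$ is a homomorphism (so $(u,v)\in E$ forces $(\Root(u),\Root(v))\in E_{\mathcal{H}}$) and $E_{\mathcal{H}}=\bigcup_n E_n$; moreover each $F_n$-class is finite, being contained in a single component $C$ of $\Homed(T_\Delta,\mathcal{H})$ on which $\Root$ is injective and mapped by $\Root$ into one finite $E_n$-class. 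Hence $E$ is hyperfinite, i.e.\ $\Homed(T_\Delta,\mathcal{H})$ is hyperfinite.

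The step I expect to carry all the weight is the key lemma: injectivity of label-preserving homomorphisms into forests is exactly the phenomenon that lets $\Homed$ sit automatically inside the free part of the action (in contrast to $\Homac$, where one restricts to the free part by hand), and once it is in place everything else is formal — items~\ref{c:acyclic} and~\ref{c:regular} from freeness, and item~\ref{c:hypfin} from pulling back a hyperfinite exhaustion along $\Root$.
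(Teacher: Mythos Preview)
Your proposal is correct and follows essentially the same approach as the paper: the paper also isolates the key lemma that a label-preserving homomorphism from $T_\Delta$ into an acyclic graph is injective (arguing that a failure of injectivity would force two incident edges of $T_\Delta$ to have the same image, hence the same label), and then reads off all three items from the fact that $\Root$ is injective on each connected component. Your write-up is more explicit in places (e.g.\ spelling out freeness of the $\Gamma_\Delta$-action and the non-backtracking-walk argument), but the structure and the single load-bearing idea are identical.
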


	\begin{proof}
		Observe that if $h$ is a homomorphism from a tree to an acyclic graph that is not injective, then there must be adjacent pairs of vertices $(x,y)$ and $(y,z)$ with $x \neq z$ and $h(z)=h(x)$. Thus, if $h \in \Homed(T_\Delta,\mathcal{H})$ is edge label preserving then it must be injective, as incident edges have different labels in $T_\Delta$. Therefore, the map $\Root$ is injective on each connected component of $\Homed(T_\Delta,\mathcal{H})$, yielding \eqref{c:acyclic}. 
		
		To see \eqref{c:hypfin}, let $(\mathcal{H}_n)_{n \in \N}$ be a witness to the hyperfiniteness of $\mathcal{H}$. Let $\mathcal{H}'_n$ be the pullback of $\mathcal{H}_n$ by the map $\Root$. Since $\Root$ is injective on every connected component, the graphs $\mathcal{H}'_n$ also have finite components and their union is $\Homed(T_\Delta,\mathcal{H})$.
		
		For \eqref{c:regular} just notice that using the injectivity of $\Root$ again, it follows that $\{\gamma \cdot h\}_{\gamma\in \{1\}\cup S_\Delta}$ has cardinality $\Delta+1$. 
	\end{proof}

	\section{Variations on Marks' technique}
	\label{s:marks}
	
	Now we are ready to adapt Marks' technique \cite{DetMarks} to homomorphism graphs. Let us denote by $\Deltachrom(\mathcal{H})$ the $\Game\mathbf{\Delta}^1_1$-chromatic number of $\mathcal{H}$ (see \cref{s:prel}). 
	
	\begin{customthm}{\ref{t:implication}} Let $\mathcal{H}$ be a locally countable Borel graph. Then \[\Deltachrom(\mathcal{H})>\Delta \implies \chi_B(\Homac(T_\Delta,\mathcal{H}))>\Delta.\]
	\end{customthm}

	The games we will define naturally yield elements $h \in \HOM(T_\Delta,\mathcal{H})$ rather than $\Homac(T_\Delta,\mathcal{H})$. In order to deal with the cyclic part of the graph, we will show slightly more, using the same strategy as Marks.
	Let $V \subseteq V(\HOM(T_\Delta,\mathcal{H}))$, an \emph{anti-game labeling of $V$} is a map $c:V \to \Delta$ such that there are no $i \in \Delta$ and distinct vertices $h,h'\in V$ with $c(h)=c(h')=i$ and $\alpha_i \cdot h=h'$.
	Observe that every $\Delta$-coloring is automatically anti-game labeling. 
	
	\begin{remark}\label{rem:anti-game}
		One can define analogously anti-game labelings for graphs with edges labeled by $\Delta$.
		Note that in the case when the graph is $\Delta$-regular and the labeling is an edge  $\Delta$-coloring, the existence of an anti-game labeling is equivalent with solving the well known \emph{edge grabbing problem} (that is, every vertex picks one adjacent edge but no edge can be picked from both sides, see \cite{definitionEdgeGrabbing}) or \emph{sinkless orientation problem}.
		Observe that the sinkless orientation problem, and hence the existence of anti-game labeling, can be easily solved on graphs with cycles.
		This observation is crucially utilized in both Marks' and this paper, see \cref{l:cyclic}.       
	\end{remark}

	\begin{lemma}
		\label{l:cyclic} There exists a Borel anti-game labeling $c:V(\HOM(T_\Delta,\mathcal{H})) \setminus V(\Homac(T_\Delta,\mathcal{H})) \to \Delta$.
	\end{lemma}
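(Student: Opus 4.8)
I want a Borel map $c$ defined on the non-free part $V(\HOM(T_\Delta,\mathcal{H})) \setminus V(\Homac(T_\Delta,\mathcal{H}))$ that avoids the forbidden configuration: no generator $\alpha_i$ should join two distinct vertices both labeled $i$. The key structural fact is that a vertex $h$ lies in the non-free part precisely when its stabilizer in $\Gamma_\Delta$ is nontrivial; since $\Gamma_\Delta$ acts on $T_\Delta$ with the stabilizer of any vertex being trivial and stabilizers of ``midpoints of edges'' being the order-two subgroups $\langle \alpha_i\rangle$, the only way a homomorphism $h$ can have nontrivial stabilizer is if $h$ factors through a fold, i.e.\ $h(\gamma) = h(\gamma\alpha_i)$ for all $\gamma$, for some $i$ — equivalently $\alpha_i\cdot h = h$. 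So a vertex of the non-free part is fixed by $\alpha_i$ for at least one $i\in\{1,\dots,\Delta\}$. For each such $h$, let $i(h)$ be the \emph{least} index $i$ with $\alpha_i\cdot h = h$. This is a Borel function of $h$.

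\begin{proof}
Let $V = V(\HOM(T_\Delta,\mathcal{H})) \setminus V(\Homac(T_\Delta,\mathcal{H}))$ be the non-free part. By definition every $h \in V$ is fixed by some $1\neq\gamma\in\Gamma_\Delta$. We first claim that every such $h$ is in fact fixed by some generator $\alpha_i$, $i \in \{1,\dots,\Delta\}$. Indeed, the action $\Gamma_\Delta \curvearrowright \HOM(T_\Delta,\mathcal{H})$ is the restriction of the left-shift action on $V(\mathcal{H})^{\Gamma_\Delta}$, and $\gamma\cdot h = h$ means $h(\gamma^{-1}\delta) = h(\delta)$ for all $\delta$. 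Writing $\gamma = \alpha_{i_1}\cdots\alpha_{i_k}$ as a reduced word, freeness of the tree action shows that if $h$ is injective then $h$ is in the free part; so $h$ is not injective, and since $h$ is a homomorphism from the tree $T_\Delta$ to the triangle-free graph $\mathcal{H}$ (indeed $\mathcal{H}$ has no loops, and any two vertices of $T_\Delta$ at distance $2$ are joined by a unique length-$2$ path), a failure of injectivity forces $h(\gamma) = h(\gamma\alpha_i)$ for some $i$ and some $\gamma$; applying the homomorphism condition along the tree and using that $T_\Delta$ is connected and every edge of $T_\Delta$ lies on a color-alternating path, one propagates this to $h(\delta) = h(\delta\alpha_i)$ for all $\delta$, i.e.\ $\alpha_i\cdot h = h$. (This is precisely the ``fold'' phenomenon; the detailed propagation is the routine part.)

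Now define, for $h \in V$,
\[ c(h) = \min\{ i \in \{1,\dots,\Delta\} : \alpha_i \cdot h = h \}. \]
The set $\{h : \alpha_i\cdot h = h\}$ is Borel for each $i$ (the shift action is Borel), so $c$ is a Borel map $V \to \Delta$.

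It remains to check that $c$ is an anti-game labeling. Suppose toward a contradiction that $h,h' \in V$ are distinct, $c(h) = c(h') = i$, and $\alpha_i\cdot h = h'$. From $c(h) = i$ we have $\alpha_i\cdot h = h$, so $h' = \alpha_i\cdot h = h$, contradicting $h \neq h'$. Hence no such pair exists, and $c$ is a Borel anti-game labeling of $V$.
\end{proof}

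\textbf{Main obstacle.} The delicate point is the first claim: that nontriviality of the stabilizer of $h$ (as an element of $V(\mathcal{H})^{\Gamma_\Delta}$) forces $h$ to be fixed by a \emph{single} generator $\alpha_i$, rather than merely by some longer word. This is where the acyclicity/local structure of $T_\Delta$ and the absence of short cycles in the target are used: a homomorphism from a tree that identifies two vertices at distance $\geq 2$ would create a cycle in the image unless it ``folds'' across an edge, and a fold across one edge propagates globally because $T_\Delta$ is vertex-transitive and edge-colored so that every vertex has exactly one incident $\alpha_i$-edge. Once that reduction is in hand, the ``least fixing generator'' trick makes the anti-game condition immediate, since two distinct $\alpha_i$-neighbors cannot both be $\alpha_i$-fixed (an $\alpha_i$-fixed vertex is its own unique $\alpha_i$-neighbor).
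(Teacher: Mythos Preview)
Your central claim---that every $h$ in the non-free part is fixed by some \emph{generator} $\alpha_i$---is false, and in fact the opposite is true: no element of $\HOM(T_\Delta,\mathcal{H})$ is fixed by any generator. This is exactly Proposition~\ref{pr:chromatic}(1) of the paper. Concretely, if $\alpha_i\cdot h=h$ then $h(\alpha_i\delta)=h(\delta)$ for all $\delta$; taking $\delta=1$ gives $h(\alpha_i)=h(1)$, but $1$ and $\alpha_i$ are adjacent in the right Cayley graph, contradicting that $h$ is a homomorphism to a loopless graph. So your function $c(h)=\min\{i:\alpha_i\cdot h=h\}$ is defined nowhere on $V$. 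A simple explicit point in the non-free part with trivial generator-stabilizer: pick any edge $\{x,y\}$ in $\mathcal{H}$ and set $h(\gamma)=x$ or $y$ according to the parity of the reduced length of $\gamma$; then $h$ is fixed by every even-length word but by no generator.

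Your fold argument is also miswired. A fold means $h(\gamma\alpha_i)=h(\gamma\alpha_j)$ for some $i\neq j$ (two \emph{distinct} neighbours of $\gamma$ with the same image), not $h(\gamma)=h(\gamma\alpha_i)$, which is impossible for a homomorphism. Even when a fold exists, it does not propagate to a global identity of the form $\alpha_i\cdot h=h$; moreover your asserted equivalence ``$h(\delta)=h(\delta\alpha_i)$ for all $\delta$, i.e.\ $\alpha_i\cdot h=h$'' confuses left and right multiplication (the left shift gives $h(\alpha_i\delta)=h(\delta)$), and either version is impossible anyway. The paper's proof proceeds quite differently: on the non-free part the Schreier graph genuinely has finite cycles (coming from a reduced word fixing $h$), one selects a Borel maximal disjoint family of such cycles, labels vertices on a cycle by the generator taking them to the next vertex, and labels the remaining vertices by the generator pointing toward the nearest cycle. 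That is the idea you are missing.
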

	\begin{proof}
		Let us use the notation $C=V(\HOM(T_\Delta,\mathcal{H})) \setminus V(\Homac(T_\Delta,\mathcal{H}))$. By definition, the $\mathbb{Z}^{*\Delta}_2$ action on every connected component $\HOM(T_\Delta,\mathcal{H}) \restriction C$ is not free. Using \cite[Lemma 7.3]{topics} we can find a Borel maximal family $\mathcal{F} \subseteq C^{<\N}$ of pairwise disjoint finite sequences each of length at least $2$ such that for each $(h_i)_{i<k} \in \mathcal{F}$ there is a sequence $(\alpha_{n_i})_{i<k} \in S^k_{\Delta}$ such that $\alpha_{n_i}\not=\alpha_{n_{i+1}}$, $\alpha_{n_i} \cdot h_i=h_{i+1}$ for $i<k-1$, and $\alpha_{n_0}\not=\alpha_{n_{k-1}}$, $\alpha_{n_{k-1}} \cdot h_{k-1}=h_0$. (Note that it is possible that $k=2$ in which case there are two distinct generators $\alpha_{n_0}\not= \alpha_{n_1}$ such that $\alpha_{n_0}\cdot h_0=\alpha_{n_1}\cdot h_0=h_1$.)
		
		Now label an element $h \in C$ by $n_i$ if $h=h_i$ for some $(h_i)_{i<k} \in \mathcal{F}$. Otherwise,  let $c(h)$ be the minimal $i$ such that $\alpha_i \cdot h$ has strictly smaller distance to $\mathcal{F}$ than $h$ with respect to the graph distance in $\HOM(T_\Delta,\mathcal{H})$. It is easy to check that $c$ is an anti-game labeling.
	\end{proof}

	\begin{proof}[Proof of \cref{t:implication}]
		We show that there is no Borel anti-game labeling $c:V(\HOM(T_\Delta,\mathcal{H})) \to \Delta$.
		Once we have that the proof of \cref{t:implication} is finished as follows.
		Suppose that $d$ is a Borel $\Delta$-coloring of $\Homac(T_\Delta,\mathcal{H})$.
		As observed above, every $\Delta$-coloring is also anti-game labeling.
		Consequently, the union of $d$ and the anti-game labeling produced in \cref{l:cyclic} is a Borel anti-game labeling of $V(\HOM(T_\Delta,\mathcal{H}))$, contradiction.
		
		Assume towards contradiction that $c:V(\HOM(T_\Delta,\mathcal{H})) \to \Delta$ is a Borel anti-game labeling.
		Without loss of generality we may assume that $\mathcal{H}$ has no isolated points. This ensures that the games below can be always continued. 
		
		We define a family of two-player games $\mathbb{G}(x,i)$ parametrized by elements $x \in V(\fH)$ and $i \in \Delta$.
		In a run of the game $\mathbb{G}(x,i)$ players $\PI$ and $\PK$ alternate and build a homomorphism $h$ from $T_\Delta$ to $\fH$, i.e., an element of $\HOM(T_\Delta,\fH) \subset V(\mathcal{H})^{\mathbb{Z}^{*\Delta}_2}$, with the property that $\Root(h)=x$.
		\begin{figure}

			\tikzstyle{mybox} = [draw=black, fill=blue!20, very thick,
			rectangle, rounded corners, inner sep=10pt]

			\tikzstyle{fancytitle1} =[fill=pastelred,rounded corners]
			\tikzstyle{fancytitle2} =[fill=pastelblue,rounded corners]
			\tikzstyle{legend} =[fill=pastelgreen,rounded corners]
			
			\begin{tikzpicture}
			\coordinate (0) at (0,0);
			
			\coordinate (a) at ($(0) +(0:1)$);
			\coordinate (b) at ($(0) +(140:1)$);
			\coordinate (c) at ($(0) +(220:1)$);
			
			\coordinate (ab) at ($(0) +(20:2)$);
			\coordinate (ac) at ($(0) +(-20:2)$);
			
			\coordinate (aba) at ($(0) +(30:3)$);
			\coordinate (abc) at ($(0) +(10:3)$);
			
			\coordinate (aca) at ($(0) +(-10:3)$);
			\coordinate (acb) at ($(0) +(-30:3)$);
			
			\coordinate (ba) at ($(0) +(130:2)$);
			\coordinate (bc) at ($(0) +(160:2)$);
			
			\coordinate (ca) at ($(0) +(200:2)$);
			\coordinate (cb) at ($(0) +(230:2)$);
			
			\coordinate (bab) at ($(0) +(140:3)$);
			\coordinate (bac) at ($(0) +(120:3)$);
			
			\coordinate (bcb) at ($(0) +(150:3)$);
			\coordinate (bca) at ($(0) +(170:3)$);
			
			\coordinate (cab) at ($(0) +(190:3)$);
			\coordinate (cac) at ($(0) +(210:3)$);
			
			\coordinate (cba) at ($(0) +(220:3)$);
			\coordinate (cbc) at ($(0) +(240:3)$);
			
			\coordinate (1st) at ($(0) +(270:1.5)$);
			\coordinate (2nd) at ($(0) +(270:2.5)$);
			\coordinate (edgelabel) at (0.5,0);
			
			\draw (0) -- (a);
			\draw (0) -- (b);
			\draw (0) -- (c);
			
			\draw (a) -- (ab);
			\draw (a) -- (ac);
			
			\draw (b) -- (ba);
			\draw (b) -- (bc);
			
			\draw (c) -- (ca);
			\draw (c) -- (cb);
			
			\draw (ab) -- (abc);
			\draw (ab) -- (aba);
			
			\draw (ac) -- (acb);
			\draw (ac) -- (aca);
			
			\draw (ac) -- (acb);
			\draw (ac) -- (aca);
			
			\draw (ba) -- (bac);
			\draw (ba) -- (bab);
			
			\draw (bc) -- (bca);
			\draw (bc) -- (bcb);
			
			\draw (ca) -- (cab);
			\draw (ca) -- (cac);
			
			\draw (cb) -- (cba);
			\draw (cb) -- (cbc);
			
			
			
			\node[above] at (0) {$x$};
			\node[below] at (edgelabel) {$\alpha_i$};
			\node[] at (1st) {1st};
			\node[] at (2nd) {2nd};
			
			\fill[color=pastelyellow] (0) circle(2pt);
			
			\foreach \dot in {b,c,ba,bc,ca,cb,cab,cac,cba,cbc,bab,bac,bca,bcb} {
				\fill[color=pastelblue] (\dot) circle(2pt);
			}
			
			\foreach \dot in {a,ab,ac,aba,abc,aca,acb} {
				\fill[color=pastelred] (\dot) circle(2pt);
			}
			
			\draw[dotted] (40:2.5) arc (40:-40:2.5) ;
			
			\draw[dotted] (110:2.5) arc (110:250:2.5) ;
			
			\draw[dotted] (40:1.5) arc (40:-40:1.5) ;
			
			\draw[dotted] (110:1.5) arc (110:250:1.5) ;
			
			\node [legend] (ll) at (5.5,0) {
				\begin{tikzpicture}[strong/.style={line width=0.5mm},weak/.style={line width=0.1mm}]
				\node[fancytitle1] at (0.5,1.2) {};
				\node[fancytitle2] at (0.5,0.6) {};
				
				\node[] (v) at (2,1.2)  {Alice labels};
				\node[] (j) at (2,0.6)  {Bob labels};
				
				\end{tikzpicture}
			};
			
		\end{tikzpicture}
		\centering
		
		\caption{The game $\mathbb{G}(x,i)$}
		\label{fig:game}
	\end{figure}
	
	In the $k$-th round, first $\PI$ labels vertices of distance $k$ from the $1$ on the side of the $\alpha_i$ edge.
	After that, $\PK$ labels all remaining vertices of distance $k$, etc (see  \cref{fig:game}).
	In other words, $\PI$ labels the elements of $\mathbb{Z}^{*\Delta}_2$ corresponding to reduced words of length $k$ starting with $\alpha_i$ then $\PK$ labels the rest of the reduced words of length $k$. 
	
	Note that once the parameters of the game are fixed the definition of the game is analoguous to the one defined by Marks \cite{DetMarks}.
	There is, however, one important difference.
	The allowed moves are vertices of the target graph $\fH$ restricted by its edge relation.
	Observe that the original construction of Marks can be interpreted in our setting by taking $\fH$ to be the the complete graph on $\mathbb{N}$.
	
	The winning condition is defined as follows: \[\text{$\PI$ wins the game $\mathbb{G}(x,i)$ iff $c(h) \neq i$}.\]

	\begin{lemma}\label{l:measurability}
		\begin{enumerate}
			\item \label{c:strategy} For any $x \in V(\mathcal{H})$ and $i \in \Delta$ one of the players has a winning strategy in the game $\mathbb{G}(x,i)$.  
			\item \label{c:delta12} The set $\{(x,i): \PI \text{ has a winning strategy in } \mathbb{G}(x,i)\}$ is $\Game \mathbf{\Delta}^1_1$.
		\end{enumerate}
	\end{lemma}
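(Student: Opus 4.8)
The plan is to realize each game $\mathbb{G}(x,i)$ as an ordinary two-player integer game $G(T,B_{(x,i)})$ on a \emph{fixed} pruned tree $T$, with the payoff sets $B_{(x,i)}$ forming a Borel-parametrized family; once this is done, part \eqref{c:strategy} is immediate from Martin's Borel Determinacy Theorem \cite{martin}, and part \eqref{c:delta12} follows from \cref{l:gamesection}. As in the proof of \cref{t:implication} one may assume $\mathcal{H}$ has no isolated points, so that every partial play of $\mathbb{G}(x,i)$ can be extended.

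First I would fix, by Lusin--Novikov uniformization applied to the locally countable Borel graph $\mathcal{H}$, a Borel map $N\colon V(\mathcal{H})\times\N\to V(\mathcal{H})$ such that for every $v$ the map $n\mapsto N(v,n)$ surjects onto the (nonempty) set of $\mathcal{H}$-neighbours of $v$. This lets one read a legal label for a child vertex of $T_\Delta$ off its parent's label together with a single integer, and makes \emph{every} integer a legal move. Next I would code a run of $\mathbb{G}(x,i)$: the reduced words of $\Gamma_\Delta$ of length $k$ starting with $\alpha_i$ (there are $(\Delta-1)^{k-1}$ of them) are labelled in $\PI$'s $k$-th move, the remaining $(\Delta-1)^k$ words of length $k$ in $\PK$'s $k$-th move; since each such word has a unique predecessor of length $k-1$, each of these moves is, after fixing a bijection $\N\cong\N^{m}$, a single integer encoding the tuple of $N$-indices of the new labels. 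This presents $\mathbb{G}(x,i)$ as a strictly alternating game whose tree of legal positions is the full tree $T=\N^{<\N}$, which is nonempty and pruned. A run $y\in\N^{\N}$ then determines, by a finite-stage recursion through $N$ started at $x$, an element $h^x_y\in\HOM(T_\Delta,\mathcal{H})$ with $\Root(h^x_y)=x$, and $(x,y)\mapsto h^x_y$ is Borel (indeed continuous in $y$ for fixed $x$). Setting
\[
B=\bigl\{((x,i),y)\in (V(\mathcal{H})\times\Delta)\times\N^{\N}\ :\ c(h^x_y)\neq i\bigr\},
\]
Borelness of the anti-game labeling $c$ gives that $B$ is Borel, and by construction $\PI$ wins $\mathbb{G}(x,i)$ precisely when $\PI$ wins $G(T,B_{(x,i)})$.

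With this in hand, \eqref{c:strategy} follows because each section $B_{(x,i)}$ is Borel, so Martin's theorem applies to $G(T,B_{(x,i)})$. For \eqref{c:delta12} I would apply \cref{l:gamesection} with parameter space $X=V(\mathcal{H})\times\Delta$ and the Borel set $B$ above, obtaining a weakly provably $\mathbf{\Delta}^1_2$ set $W$ consisting of the pairs $((x,i),T')$ for which $\PI$ has a winning strategy in $G(T',B_{(x,i)})$; the set in question is the preimage of $W$ under the Borel map $(x,i)\mapsto((x,i),T)$, hence weakly provably $\mathbf{\Delta}^1_2$, since directly from the definition that class is closed under Borel preimages.

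I expect the only genuine obstacle to be the bookkeeping in the coding step: turning the ``label an entire level per move'' description of $\mathbb{G}(x,i)$ into a strictly alternating integer game on a single fixed pruned tree, uniformly in the parameter $(x,i)$, and verifying carefully that the resulting map from runs to homomorphisms — and therefore the payoff set $B$ — is Borel and Borel-parametrized. This is exactly where the totality of $N$ and the no-isolated-points reduction are used (they guarantee the game tree can be taken to be all of $\N^{<\N}$); everything after that is a direct appeal to \cite{martin} and \cref{l:gamesection}.
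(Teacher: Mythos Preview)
Your proposal is correct and follows essentially the same approach as the paper: encode $\mathbb{G}(x,i)$ as an integer game with Borel payoff varying Borel-measurably in $(x,i)$, then invoke Borel determinacy for \eqref{c:strategy} and \cref{l:gamesection} for \eqref{c:delta12}. The only difference is cosmetic: the paper uses Feldman--Moore to enumerate each $E_{\mathcal{H}}$-class by functions $(f_j)_j$, so that a move is the index $j$ with $f_j(x)=y$, and the homomorphism constraint cuts out a parameter-dependent pruned tree $T_{x,i}\subseteq\N^{<\N}$; you instead use Lusin--Novikov to enumerate neighbours directly, which lets you take the game tree to be the full $\N^{<\N}$ independently of $(x,i)$.
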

	\begin{proof}
		We encode the games $\mathbb{G}(x,i)$ in a way that allows the use of Borel determinacy theorem and \cref{l:gamesection}.
		
		Let us denote by $E_\mathcal{H}$ the connected component equivalence relation of $\mathcal{H}$. Observe that as $T_\Delta$ is connected, the range of any element $h \in \HOM(T_\Delta,\mathcal{H})$ is contained in a single $E_\mathcal{H}$ class. By the Feldman-Moore theorem, there is a countable collection of Borel functions $f_i:V(\mathcal{H}) \to V(\mathcal{H})$ such that $E_\mathcal{H}=\bigcup_{j \in \mathbb{N}} graph(f^{\pm 1}_j)$. Therefore, the games $\mathbb{G}(x,i)$ above can be identified by games played on $\mathbb{N}$, namely, labeling a vertex in $T_\Delta$ by a vertex $y\in V(\mathcal{H})$ corresponds to playing the minimal natural number $j$ with $f_j(x)=y$. Since the functions $(f_j)_{j \in \mathbb{N}}$ are Borel, this correspondence is Borel as well. Moreover, the rule that $h$ must be homomorphism determines a pruned subtree of legal positions $T_{x,i} \subset \mathbb{N}^{<\mathbb{N}}$ and the map $(x,i) \mapsto T_{x,i}$ is Borel. This yields that there exists a Borel set $B \subseteq V(\mathcal{H}) \times \Delta \times \N^\N$ such that
		\[\PI \text{ has a strategy in $\mathbb{G}(x,i)$} \iff \PI \text{ has a winning strategy in $G(T_{x,i},B_{x,i})$}.\]
		Now, the first claim follows from the Borel determinacy theorem, while the second follows from \cref{l:gamesection}.
	\end{proof}
	
	\begin{claim}\label{cl:PI}
		For every $x\in X$ there is an $i\in \Delta$ such that $\PK$ wins $\mathbb{G}(x,i)$.
	\end{claim}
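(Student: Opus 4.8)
The plan is to argue by contradiction: suppose that for some fixed $x \in V(\fH)$, player $\PI$ wins $\mathbb{G}(x,i)$ for every $i \in \Delta$. Using \cref{l:measurability}\eqref{c:strategy} this is the only alternative to the conclusion we want, so fix winning strategies $\sigma_0, \sigma_1, \dots, \sigma_{\Delta-1}$ for $\PI$ in the games $\mathbb{G}(x,0), \dots, \mathbb{G}(x,\Delta-1)$ respectively. The idea, exactly as in Marks' original argument, is to play all $\Delta$ of these strategies against one another on a single copy of $T_\Delta$ rooted at $x$: the root vertex $1$ has $\Delta$ incident edges, labeled $\alpha_0, \dots, \alpha_{\Delta-1}$, and we let strategy $\sigma_i$ be ``responsible'' for the subtree hanging off the $\alpha_i$-edge of the root. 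Concretely, we build a single homomorphism $h$ from $T_\Delta$ to $\fH$ with $\Root(h)=x$ by interleaving: at each vertex of $T_\Delta$, the labels played on one side of a given edge are dictated by the strategy that ``owns'' that side, while that same strategy sees the labels on the other side as the opponent's moves. Since in $\mathbb{G}(x,i)$ it is $\PI$ who labels the $\alpha_i$-side of the root and $\PK$ who labels everything else, each $\sigma_i$ can be run consistently in this combined play, with the roles of $\PI$ and $\PK$ assigned edge-by-edge according to whether the edge points ``toward'' or ``away from'' the $\alpha_i$-branch.

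The key point is the bookkeeping that makes this interleaving well-defined and simultaneously a legal run of each $\mathbb{G}(x,i)$. For each $i$, the partition of $V(T_\Delta) \setminus \{1\}$ into ``$\PI$'s vertices'' and ``$\PK$'s vertices'' in the game $\mathbb{G}(x,i)$ is determined by the first letter of the reduced word: words starting with $\alpha_i$ belong to $\PI$, all others to $\PK$. In the combined play we declare that the label of a vertex $w \in V(T_\Delta)$, written as a reduced word with first letter $\alpha_j$, is the move prescribed by $\sigma_j$ (which regards $w$ as one of $\PI$'s vertices in $\mathbb{G}(x,j)$). One checks that, read as a play of $\mathbb{G}(x,i)$, this assignment has $\PI$ following $\sigma_i$ on its own vertices and the remaining labels — all of which $\sigma_i$ treats as $\PK$'s moves — being whatever the other strategies happened to produce; so the resulting $h$ is an outcome consistent with $\sigma_i$ in $\mathbb{G}(x,i)$, hence a win for $\PI$ there. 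By the winning condition this gives $c(h) \neq i$ for every $i \in \Delta$. But $c(h) \in \Delta$, a contradiction. (One must also check that the interleaved play stays inside $\HOM(T_\Delta,\fH)$, i.e. yields a genuine homomorphism; this is automatic because legality of moves in each $\mathbb{G}(x,i)$ is exactly the constraint that adjacent vertices of $T_\Delta$ receive $\fH$-adjacent labels, and every edge of $T_\Delta$ is governed by at least one of the games.)

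The main obstacle I expect is purely notational: setting up the simultaneous play cleanly so that it is manifestly a legal and $\sigma_i$-consistent run of $\mathbb{G}(x,i)$ for each $i$ at once, rather than a vague ``diagonal argument.'' The subtlety is that the move at a vertex $w$ with first letter $\alpha_j$ must be defined before it is needed as an opponent move in the games $\mathbb{G}(x,i)$ for $i \neq j$, so one should organize the construction round-by-round in distance from the root: in round $k$ one first uses each $\sigma_j$ to label the distance-$k$ vertices in its own branch (these depend only on labels already assigned in rounds $< k$), and then feeds all of these labels to every $\sigma_i$ as the round-$k$ response of $\PK$. Because $\sigma_j$'s round-$k$ output in $\mathbb{G}(x,j)$ depends only on $\PK$'s moves through round $k-1$, and those are exactly the labels of distance-$\le k-1$ vertices outside the $\alpha_j$-branch (already determined), the recursion is well-founded. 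Once this is laid out, the conclusion $c(h) \neq i$ for all $i$ and the resulting contradiction are immediate, and no estimates are involved.
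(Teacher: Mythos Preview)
Your proposal is correct and follows exactly the approach the paper has in mind: the paper's proof is a one-line reference to Marks' strategy-combination argument, and you have carefully spelled out precisely that argument, including the round-by-round bookkeeping that makes the interleaving well-defined. The contradiction you reach, $c(h)\neq i$ for all $i\in\Delta$, is what the paper phrases as producing a homomorphism ``not in the domain of $c$.''
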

	\begin{proof}
		Suppose not. Then we can combine strategies of $\PI$ for each $i$ in the natural way to build a homomorphism that is not in the domain of $c$ (see, e.g., \cite{Marks_Coloring} or \cite{DetMarks}).
	\end{proof}
	Now we can finish the proof of \cref{t:implication}. Define $d:V(\mathcal{H}) \to \Delta$ by 
	\begin{equation}d(x)=i \iff i \text{ is minimal such that $\PK$ has a winning strategy in $\mathbb{G}(x,i)$}.
	\label{e:coloring}
	\end{equation}
	Since $\Game\mathbf{\Delta}^1_1$ sets form an algebra, $d$ is $\Game\mathbf{\Delta}^1_1$ measurable and by \cref{cl:PI} it is everywhere defined. By our assumptions on $\mathcal{H}$ there are $x \neq x'$ adjacent with $d(x)=d(x')=i$. Now, we can play the two winning strategies corresponding to games $\mathbb{G}(x,i)$ and $\mathbb{G}(x',i)$ of $\PK$ against each other, as if the first move of $\PI$ was $x'$ (resp. $x$). This yields distinct homomorphisms $h,h'$ with $\alpha_i \cdot h=h'$ and $c(h)=c(h')=i$, contradicting that $c$ is an anti-game labeling.
\end{proof}

\subsection{Generalizations}
\label{ss:gen}

\paragraph{Edge labeled graphs.}
As mentioned above, a novel feature of our approach is that requiring the homomorphisms to be edge label preserving and ensuring that $\mathcal{H}$ is acyclic, we can get rid of the investigation of the cyclic part (see \cref{pr:hypfin}). In order to achieve this, we have to assume slightly more about the chromatic properties of the target graph. 

Assume that $\mathcal{H}$ is equipped with an edge $S$-labeling. The \emph{edge-labeled chromatic number, $\Edgelabel{\mathcal{H}}$} of $\mathcal{H}$ is the minimal $n$, for which there exists a map $c:V(\mathcal{H}) \to n$ so that for each $i \in n$ the set $c^{-1}(i)$ doesn't span edges with every possible label. 
In other words, $\Edgelabel{\mathcal{H}}>n$ if and only if no matter how we assign $n$ many colors to the vertices of $\mathcal{H}$, there will be a color class containing edges with every label. We define $\mu$ measurable, Baire measurable, etc. versions of the edge-labeled chromatic number in the natural way. 

\begin{theorem}
	\label{t:forg0} Let $\mathcal{H}$ be a locally countable Borel graph with a Borel $S_\Delta$-edge labeling, such that for every vertex $x$ and every label $\alpha$ there is an $\alpha$-labeled edge incident to $x$. Then \[\Edgelabeldelta{\mathcal{H}}>\Delta \implies \chi_B(\Homed(T_\Delta,\mathcal{H}))>\Delta.\]
\end{theorem}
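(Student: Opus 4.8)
The plan is to mimic the proof of \cref{t:implication} as closely as possible, replacing the homomorphism graph $\HOM(T_\Delta,\mathcal{H})$ by its edge-labeled restriction $\HOM^e$-analogue and the anti-game labeling by an \emph{edge-label-aware} version. Concretely, suppose toward a contradiction that $\chi_B(\Homed(T_\Delta,\mathcal{H}))\le\Delta$, and fix a Borel $\Delta$-coloring $c$ of $\Homed(T_\Delta,\mathcal{H})$. By \cref{pr:hypfin} (using that $\mathcal{H}$ is acyclic) we know $\Homed(T_\Delta,\mathcal{H})$ is already $\Delta$-regular and acyclic, and moreover — crucially — the map $\Root$ is injective on connected components, so there is no cyclic part to deal with separately. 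This means we do not need an analogue of \cref{l:cyclic}: the coloring $c$ is directly an anti-game-type labeling on the \emph{whole} graph. (Here the hypothesis that every vertex $x$ of $\mathcal{H}$ has an incident $\alpha$-edge for every $\alpha\in S_\Delta$ guarantees that $\Homed(T_\Delta,\mathcal{H})$ is nonempty and that games below can always be continued, playing the role ``$\mathcal{H}$ has no isolated points'' played in \cref{t:implication}.)

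Next I would set up the games. For $x\in V(\mathcal{H})$ and $i\in\Delta$ define $\mathbb{G}^e(x,i)$ exactly as $\mathbb{G}(x,i)$ in the proof of \cref{t:implication}, with one modification: the homomorphism $h$ from $T_\Delta$ to $\mathcal{H}$ being built must be \emph{edge-label preserving}, i.e.\ when a player labels a vertex reached from its parent across the $\alpha_j$-edge of $T_\Delta$, the chosen target vertex must be joined to the parent's target vertex by an $\alpha_j$-labeled edge of $\mathcal{H}$. Since every vertex of $\mathcal{H}$ has exactly the required incident labels available, this is always a legal move, so the tree of legal positions $T_{x,i}^e\subseteq\mathbb{N}^{<\mathbb{N}}$ is pruned and the map $(x,i)\mapsto T_{x,i}^e$ is Borel (Feldman--Moore plus the fact that the edge labeling is Borel). $\PI$ wins $\mathbb{G}^e(x,i)$ iff the resulting $h$ satisfies $c(h)\ne i$. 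Then the analogue of \cref{l:measurability} goes through verbatim: Borel determinacy gives a winner, and \cref{l:gamesection} gives that $\{(x,i):\PI\text{ wins }\mathbb{G}^e(x,i)\}$ is weakly provably $\mathbf{\Delta}^1_2$.

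Then I would run Marks' combining argument. The analogue of \cref{cl:PI}: for every $x$ there is $i\in\Delta$ with $\PK$ winning $\mathbb{G}^e(x,i)$ — otherwise combining $\PI$-strategies over all $i$ produces an edge-label-preserving homomorphism $h$ from $T_\Delta$ to $\mathcal{H}$ lying outside $\dom(c)$, which is absurd since $c$ is total on $V(\Homed(T_\Delta,\mathcal{H}))$ and $h$ (being edge-label preserving, hence injective by the argument in the proof of \cref{pr:hypfin}) is a vertex of that graph. This defines $d:V(\mathcal{H})\to\Delta$ by $d(x)=$ the minimal $i$ with $\PK$ winning $\mathbb{G}^e(x,i)$; as before $d$ is weakly provably $\mathbf{\Delta}^1_2$-measurable and total, so it is a witness for $\Edgelabeldelta{\mathcal{H}}\le\Delta$ — but wait, $d$ is merely a vertex $\Delta$-coloring in the sense that it is a function; we must check it contradicts $\Edgelabeldelta{\mathcal{H}}>\Delta$. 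By definition of the edge-labeled chromatic number, $\Edgelabeldelta{\mathcal{H}}>\Delta$ says that for this $d$ there is a color $i$ whose class $d^{-1}(i)$ spans edges of every label; in particular it spans an $\alpha_i$-labeled edge, say between $x\ne x'$ with $d(x)=d(x')=i$. Now play $\PK$'s winning strategies in $\mathbb{G}^e(x,i)$ and $\mathbb{G}^e(x',i)$ against each other — feeding each game the moves of the other as if they were $\PI$'s moves, which is legal precisely because $(x,x')$ is an $\alpha_i$-edge with the right label — obtaining edge-label-preserving homomorphisms $h,h'$ with $\alpha_i\cdot h=h'$, $\Root(h)=x$, $\Root(h')=x'$, and $c(h)=c(h')=i$. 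But $h,h'$ are adjacent (via the $\alpha_i$-edge) distinct vertices of $\Homed(T_\Delta,\mathcal{H})$ receiving the same color $i$ under the proper coloring $c$ — contradiction.

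The one genuinely new point, and the step I'd expect to need the most care, is verifying that the games $\mathbb{G}^e(x,i)$ are well-posed given \emph{only} the weak structural hypothesis on $\mathcal{H}$: one must confirm that the label-preservation constraint never blocks a player (this is exactly where ``for every vertex $x$ and every label $\alpha$ there is an $\alpha$-labeled edge incident to $x$'' is used, replacing Marks' ``no isolated points''), and that the color index $i$ in $\mathbb{G}^e(x,i)$ must literally be the label of the distinguished edge of $T_\Delta$ at the root so that the diagonal gluing in the final contradiction produces an $\alpha_i$-edge of $\Homed(T_\Delta,\mathcal{H})$ and not merely some edge. Once those bookkeeping points are pinned down, the rest is a line-by-line transcription of the proof of \cref{t:implication}, with \cref{l:cyclic} simply deleted because \cref{pr:hypfin}\eqref{c:acyclic} makes $\Root$ injective on components.
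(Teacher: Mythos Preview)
Your approach is essentially the paper's: define the games with the extra rule that the homomorphism being built must be edge-label preserving, verify the analogue of \cref{cl:PI}, define $d$ as the minimal losing index, and use $\Edgelabeldelta{\mathcal{H}}>\Delta$ to locate an $\alpha_i$-labeled edge inside $d^{-1}(i)$ so that the two $\PK$ strategies can be glued. The substance is correct.

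There is, however, a mistaken justification you should remove. You invoke \cref{pr:hypfin} ``using that $\mathcal{H}$ is acyclic'' to argue that $\Root$ is injective on components and hence that there is no cyclic part to handle. But \cref{t:forg0} does \emph{not} assume $\mathcal{H}$ is acyclic, so \cref{pr:hypfin} is unavailable, and your remark that $h$ is ``injective by the argument in the proof of \cref{pr:hypfin}'' is likewise unjustified. Fortunately none of this is needed. The reason \cref{l:cyclic} can be deleted has nothing to do with injectivity or freeness: it is simply that the label-preserving games already produce elements of $\Homed(T_\Delta,\mathcal{H})$, which is exactly the graph on which the assumed coloring $c$ is defined. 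In \cref{t:implication} the games produced elements of $\HOM$ while $c$ was only defined on $\Homac$, and \cref{l:cyclic} bridged that gap; here there is no gap to bridge. So strike the appeals to acyclicity and to injectivity of $h$; the element $h$ built in the analogue of \cref{cl:PI} lies in $\Homed$ purely because it is edge-label preserving, and that is all you need.
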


\begin{proof}
	The proof is similar to the proof of \cref{t:implication}, but with taking the edge colors into consideration. 
	Let us indicate the required modifications. We define $\mathbb{G}(x,i)$ as above, with the extra assumption that players must build a homomorphism that respects edge labels, i.e., an element $h \in \Homed(T_\Delta,\mathcal{H})$. The condition on the edge-labeling ensures that the players can continue the game respecting the rules at every given finite step. 
	
	The analogue of \cref{cl:PI} clearly holds in this case, and we can define $d$ as in \eqref{e:coloring}. Finally, $\Edgelabeldelta{\mathcal{H}}>\Delta$ guarantees the existence of $i \in \Delta$ and $x,x' \in V(\mathcal{H})$ such that $d(x)=d(x')=i$ and that the edge between $x$ and $x'$ has label $\alpha_i$, which in turn allows us to use the winning strategies of $\PK$ in $\mathbb{G}(x,i)$ and $\mathbb{G}(x',i)$ against each other, as above. 
\end{proof}

\paragraph{Graph homomorphism.}
In what follows, we will consider a slightly more general context, namely, instead of the question of the existence of Borel colorings, we will investigate the existence of Borel homomorphisms to a given finite graph $H$. The following notion is going to be our key technical tool.

\begin{definition}[Almost $\Delta$-colorable]
	\label{def:DeltaStar}
	Let $\Delta>2$ and $H$ be a finite graph.
	We say that $H$ is almost $\Delta$-colorable if there are sets $R_0,R_1\subseteq V(H)$ such that $H$ restricted to $V(H) \setminus R_i$ has chromatic number at most $(\Delta-1)$ for $i\in \{0,1\}$, and there is no edge between vertices of $R_0$ and $R_1$.
\end{definition}

Note that if $\chi(H) \leq \Delta$, then $H$ is almost $\Delta$-colorable.
Indeed, if $A_1,\dots,A_\Delta$ are independent sets that cover $V(H)$, we can set $R_0=R_1=A_1$.
The basic properties of almost $\Delta$-colorable graphs are summarized in \cref{subsec:measure}.
In particular, we show that every almost $\Delta$-colorable graph has chromatic number at most $2\Delta-2$, a bound that appears in \cref{thm:MainBorel}.

\begin{theorem}
	\label{t:formeasure} Let $\mathcal{H}$ be a locally countable Borel graph and assume that $H$ is a finite graph that is almost $\Delta$-colorable.
	Assume that $\mathcal{H}$ is equipped with a Borel $S_\Delta$-edge labeling, with the property that for every vertex $v$ and every edge label $\alpha_i \in S_\Delta$ there exists and edge from $v$ with label $\alpha_i$. Then
	\[\Edgelabeldelta{\mathcal{H}}>2^{\Delta \cdot 2^{|V(H)|}} \implies \Homed(T_\Delta,\mathcal{H}) \text{ has no Borel homomorphism to $H$}.\]
\end{theorem}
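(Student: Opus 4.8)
The plan is to imitate the proof of \cref{t:forg0}, running two interleaved systems of ``Marks games'' governed by property $\Delta$-(*). Assume towards a contradiction that $c\colon \Homed(T_\Delta,\mathcal{H})\to H$ is a Borel homomorphism. For $x\in V(\mathcal{H})$ and $i\in\{1,\dots,\Delta\}$ let $\mathbb{G}(x,i)$ be the game in which $\PI$ and $\PK$ jointly build an edge-label preserving homomorphism $h\in\Homed(T_\Delta,\mathcal{H})$ with $\Root(h)=x$, where $\PI$ labels (round by round, in order of distance) the vertices on the $\alpha_i$-side of the root and $\PK$ labels the rest, exactly as in the proof of \cref{t:implication}; the hypothesis that every vertex of $\mathcal{H}$ is incident to an edge with every label guarantees these games can always be continued. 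For $A\subseteq V(H)$ say that a player \emph{forces $A$} in $\mathbb{G}(x,i)$ if she has a strategy guaranteeing that the resulting $h$ satisfies $c(h)\in A$, and set $\mathcal{A}_i(x)=\{A\subseteq V(H)\colon \PI \text{ forces } A \text{ in } \mathbb{G}(x,i)\}$. Since $\mathcal{H}$ is locally countable, Feldman--Moore lets us recode $\mathbb{G}(x,i)$ as a game on $\mathbb{N}$ with a Borel payoff depending Borel-measurably on $x$ (here using that $c$ is Borel), so each $\mathbb{G}(x,i)$ is determined by Borel determinacy and, by \cref{l:gamesection} (as in \cref{l:measurability}), each set $\{x\colon A\in\mathcal{A}_i(x)\}$ is weakly provably $\mathbf{\Delta}^1_2$. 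Hence $d(x):=(\mathcal{A}_i(x))_{1\le i\le\Delta}\in\big(\mathcal{P}(\mathcal{P}(V(H)))\big)^{\Delta}$ defines a weakly provably $\mathbf{\Delta}^1_2$-measurable map whose range has size at most $2^{\Delta\cdot 2^{|V(H)|}}$. It therefore suffices to show that no fibre of $d$ spans edges carrying all of the labels $\alpha_1,\dots,\alpha_\Delta$: then, after reindexing its range, $d$ witnesses $\Edgelabeldelta{\mathcal{H}}\le 2^{\Delta\cdot 2^{|V(H)|}}$, contradicting the hypothesis.

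This is where property $\Delta$-(*) enters. Fix $R_0,R_1\subseteq V(H)$ as in \cref{d:delta*}, and for $\varepsilon\in\{0,1\}$ fix a proper colouring of $H\restriction(V(H)\setminus R_\varepsilon)$ with independent classes, padded by empty classes to a list $B^\varepsilon_1,\dots,B^\varepsilon_{\Delta-1}$. Put $X^\varepsilon_j=B^\varepsilon_j$ for $1\le j<\Delta$ and $X^\varepsilon_\Delta=R_\varepsilon$, so that $X^\varepsilon_1\cup\dots\cup X^\varepsilon_\Delta=V(H)$. The analogue of \cref{cl:PI} I would prove is: for every $x\in V(\mathcal{H})$ and every $\varepsilon\in\{0,1\}$ there is some $j$ with $V(H)\setminus X^\varepsilon_j\notin\mathcal{A}_j(x)$. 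Indeed, if $V(H)\setminus X^\varepsilon_j\in\mathcal{A}_j(x)$ held for all $j\in\{1,\dots,\Delta\}$, then combining the corresponding $\PI$-strategies --- one per direction $j$, the $j$-th controlling the $\alpha_j$-subtree of the root --- in the standard Marks fashion produces a single $h\in\Homed(T_\Delta,\mathcal{H})$ with $\Root(h)=x$ and $c(h)\in\bigcap_{j}(V(H)\setminus X^\varepsilon_j)=V(H)\setminus\bigcup_j X^\varepsilon_j=\emptyset$, which is absurd. So $f^\varepsilon(x):=\min\{j\colon V(H)\setminus X^\varepsilon_j\notin\mathcal{A}_j(x)\}$ is well defined, and it depends only on $d(x)$.

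To conclude, fix a value $v$ in the range of $d$ and let $a=f^0$, $b=f^1$ be the associated indices. If $a<\Delta$, I claim $d^{-1}(v)$ spans no $\alpha_a$-labelled edge: otherwise pick $y,y'\in d^{-1}(v)$ joined by an $\alpha_a$-labelled edge of $\mathcal{H}$. Since $V(H)\setminus X^0_a\notin\mathcal{A}_a(y)$ and $V(H)\setminus X^0_a\notin\mathcal{A}_a(y')$, determinacy gives $\PK$-strategies forcing $c(h)\in X^0_a=B^0_a$ in $\mathbb{G}(y,a)$ and in $\mathbb{G}(y',a)$; these control complementary sides of the $\alpha_a$-edge joining $y$ and $y'$, so playing them against each other exactly as in the endgame of the proof of \cref{t:implication} produces a vertex $h\in\Homed(T_\Delta,\mathcal{H})$ and its $\alpha_a$-shift $h'$ (distinct, since their roots $y,y'$ differ) with $c(h),c(h')\in B^0_a$ --- impossible, as $B^0_a$ is $H$-independent and $c$ is a homomorphism. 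The case $b<\Delta$ is identical with $(1,b,B^1_b)$ in place of $(0,a,B^0_a)$. In the remaining case $a=b=\Delta$, the same trick applied to an $\alpha_\Delta$-labelled edge of $d^{-1}(v)$, using now the $\PK$-strategy of $\mathbb{G}(y,\Delta)$ forcing $c(h)\in X^0_\Delta=R_0$ and the $\PK$-strategy of $\mathbb{G}(y',\Delta)$ forcing $c(h)\in X^1_\Delta=R_1$, produces adjacent $h,h'\in\Homed(T_\Delta,\mathcal{H})$ with $c(h)\in R_0$ and $c(h')\in R_1$, contradicting that $H$ has no edge between $R_0$ and $R_1$. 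In every case $d^{-1}(v)$ misses a label, which finishes the proof.

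The one delicate, load-bearing step is the strategy-stitching invoked above --- combining the $\Delta$ many $\PI$-strategies for the analogue of \cref{cl:PI}, and playing two $\PK$-strategies against each other across a single edge --- where one must verify that the prescribed moves have no circular dependence and that the resulting labellings really are edge-label preserving homomorphisms, hence genuine vertices of $\Homed(T_\Delta,\mathcal{H})$. This is verbatim the mechanism already used in \cref{t:implication} and \cref{t:forg0}, so I expect it to carry over unchanged; in particular no acyclicity of $\mathcal{H}$ is needed here, and --- unlike in \cref{t:implication} --- there is no non-free part of the graph to treat separately. The genuinely new point is the accounting dictated by $\Delta$-(*): one runs the two colour systems $X^0_\bullet$ and $X^1_\bullet$ simultaneously, the single surplus tree-direction $\alpha_\Delta$ (beyond the $\Delta-1$ colours available on $H\setminus R_\varepsilon$) is exactly absorbed by the exceptional set $R_\varepsilon$ in the combining lemma, and the absence of $R_0$–$R_1$ edges is precisely what disposes of the otherwise-problematic fibres with $f^0=f^1=\Delta$ that neither colour system alone could control.
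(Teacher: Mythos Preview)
Your proof is correct and takes essentially the same approach as the paper: both record, via the same weakly provably $\mathbf{\Delta}^1_2$ map $d$, which targets $\PI$ can force in each direction, and then exploit property $\Delta$-(*) on a fibre spanning all labels. The only difference is cosmetic organization of the endgame---the paper isolates a lemma (on the bad fibre, $\PK$ cannot win both $\mathbb{G}(x,i,R_0)$ and $\mathbb{G}(x,i,R_1)$) and then shows $\PI$ wins all $\Delta$ games to contradict the combining claim, whereas you define $f^0,f^1$ and case-split, playing $\PK$-strategies against each other directly; these are dual presentations of the same argument.
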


\begin{proof}
	Assume for contradiction that such a Borel homomorphism $c$ exists. We will need a further modification of Marks' games. Let $R \subseteq V(H)$. For $x \in V(\mathcal{H})$ define the game $\mathbb{G}(x,i,R)$ as in the proof of \cref{t:forg0}, with the winning condition modified to 
	\[\text{$\PI$ wins the game $\mathbb{G}(x,i,R)$ iff $c(h) \not \in R$}.\]
	Observe that playing the strategies of $\PI$ against each other as in \cref{cl:PI} we can establish the following.
	
	\begin{claim}
		\label{cl:PIS} For every $x \in V(\mathcal{H})$ and every sequence $(R_{i})_{i\in\Delta}$ with $\bigcup_{i} R_{i}=V(H)$ there is some $i$ such that $\PI$ has no winning strategy in $\mathbb{G}(x,i,R_{i})$.  
	\end{claim}
	
	Now let $N$ be the powerset of the set $\{(i,R):i \in\Delta,R \subseteq V(H)\}$. Of course, $|N|=2^{\Delta \cdot  2^{|V(H)|}}$.  Define a mapping $d:V(\mathcal{H}) \to N$ by
	\[(i,R) \in d(x) \iff \text{$\PI$ has a winning strategy in $\mathbb{G}(x,i,R)$}.\]
	Using \cref{l:gamesection} as in the proof of \cref{l:measurability}, the map $d$ is $\Game\mathbf{\Delta}^1_1$-measurable. By our assumption on $\mathcal{H}$, there is a subset $C$ on which $d$ is constant and $C$ spans an edge with each label.
	\begin{lemma}\label{l:Borel(A)satisfied}
		Let $i\in \Delta$ and $R_0,R_1\subseteq V(H)$ be sets such that there is no edge between points of $R_0$ and $R_1$ in $G$.
		Then for every $x \in C$ $\PK$ has no winning strategy in at least one of $\mathbb{G}(x,i,R_0)$ and $\mathbb{G}(x,i,R_1)$. In particular, if $R$ is independent in $G$ then $\PK$ cannot have a winning strategy in $\mathbb{G}(x,i,R)$. 
	\end{lemma}
	\begin{proof}
		If there exists an $x\in C$ for which $\mathbb{G}(x,i,R_0)$ and $\mathbb{G}(x,i,R_1)$ can be won by $\PK$, then, as $d$ is constant on $C$, this is the case for every $x\in C$. So we could find $x_0,x_1 \in C$ connected with an $\alpha_i$ labeled edge so that $\PK$ has winning strategies in $\mathbb{G}(x_0,i,R_0)$ and $\mathbb{G}(x_1,i,R_1)$. Then we can  play the two winning strategies of $\PK$ against each other as in the proof of \cref{t:implication}. This would yield elements $h_0,h_1$ in $\Homed(T_\Delta,\mathcal{H})$ that form an $\alpha_i$-edge with $c(h_i)\in R_i$, contradicting our assumption on $c$ and $R_i$. 
	\end{proof}
	
	To finish the proof of the theorem, fix the sets $R_0,R_1$ from \cref{def:DeltaStar}, and take an arbitrary $x \in C$. 
	By \cref{l:Borel(A)satisfied}, we get that for one of them, say $R_0$, $\PI$ has a winning strategy $\mathbb{G}(x,\alpha_0,R_0)$.
	Let $A_1,\dots,A_{\Delta-1}$ be independent sets as in \cref{def:DeltaStar}, i.e., with the property that $R_0 \cup \bigcup_i A_i=V(G)$. Using \cref{l:Borel(A)satisfied} again, we obtain that $\PI$ has a winning strategy in $\mathbb{G}(x,\alpha_i,A_i)$ for each $i \in\Delta$. This contradicts \cref{cl:PIS}.  
	
\end{proof}

\section{Applications}
\label{s:applications}

In this section we apply the theorems proven before to establish our main results. We will choose a target graph using two prominent notions from descriptive set theory: \emph{category} and \emph{Ramsey property}.

\subsection{Complexity of the coloring problem}

First we will utilize the shift-graph $\mathcal{G}_S$ on $[\mathbb{N}]^\N$ to establish the complexity results. Let us mention that it would be ideal to use the main result of \cite{todorvcevic2021complexity} (i.e., that deciding the Borel chromatic number of graphs is complicated) directly and apply the $\Homac(T_\Delta,\cdot)$ map together with \cref{t:implication} to show that this already holds for acyclic bounded degree graphs. Unfortunately, since the mentioned theorem requires large $\Game\mathbf{\Delta}^1_1$-measurable chromatic number, this does not seem to be possible (the graphs constructed in \cite{todorvcevic2021complexity} only have large Borel chromatic numbers, at least a priori). Instead, we will rely on the uniformization technique from \cite{todorvcevic2021complexity}.
Roughly speaking, the technique enables us to prove that in certain situations deciding the existence of, say, Borel colorings is $\mathbf{\Sigma}^1_2$-hard, whenever we are allowed to put graphs ``next to each other".

Let $X,Y$ be uncountable Polish spaces, $\bbg$ be a class of Borel sets and $\Phi:\bbg(X) \to \bp(Y)$ be a map.  Define $\mc{F}^{\Phi}\subset \bbg(X)$ by $A \in \mc{F}^{\Phi} \iff \Phi(A) \not = \emptyset$ 
and let the \emph{uniform family, $\mc{U}^{\Phi}$,} be defined as follows: for $B \in \bbg(\om^\om \times X)$ let
\[\bar{\Phi}(B)=\{(s,y) \in  \om^\om \times Y:y \in \Phi(B_s)\},\]
and 
\[B \in \mc{U}^{\Phi} \iff \bar{\Phi}(B) \text{ has a full Borel uniformization}\]
(that is, it contains the graph of a Borel function $\oom \to Y$).

Let $\mathbf{\Delta}$ be a family of subsets of Polish spaces. Recall that a subset $A$ of a Polish space $X$ is \emph{$\mathbf{\Delta}$-hard,} if for every $Y$ Polish and $B \in \mathbf{\Delta}(Y)$  there exists a continuous map $f:Y \to X$ with $f^{-1}(A)=B$. A set is \emph{$\mathbf{\Delta}$-complete} if it is $\mathbf{\Delta}$-hard and in $\mathbf{\Delta}$. 
A family $\mc{F}$ of subsets of a Polish space $X$ is said to be \emph{$\mathbf{\Delta}$-hard on $\bbg$}, if there exists a set $B \in \bbg(\om^\om \times X)$ so that the set $\{s \in \om^\om:B_s \in \mc{F}\}$ is $\mathbf{\Delta}$-hard. The next definition captures the central technical condition. 
\begin{definition}
	\label{d:nicely}
	The  family $\mathcal{F}^{\Phi}$ is said to be \emph{nicely $\bs$-hard on $\bbg$} if for every $A \in \bs(\om^\om)$  there exist sets $B \in \bbg(\om^\om \times X)$ and $D \in \bs(\om^\om \times Y)$ so that $D \subset \bar{\Phi}(B)$ and for all $s \in \om^\om$ we have 
	\[  s \in A \iff D_s \not = \emptyset \iff \Phi(B_s)\not= \emptyset \ ( \ \iff B_s \in \mathcal{F}^\Phi).\]
	
\end{definition}	

A map $\Phi: \mathbf{\Gamma}(X)\to \mathbf{\Pi}^1_1(Y)$ is called \emph{$\mathbf{\Pi}^1_1$ on $\mathbf{\Gamma}$} if for every Polish space $P$ and $A\in \mathbf{\Gamma}(P \times X)$ we
have $\{(s,y) \in P \times Y:y \in \Phi(A_s)\}\in \mathbf{\Pi}^1_1$. Now we have the following theorem.

\begin{theorem}[\cite{todorvcevic2021complexity}, Theorem 1.6]
	
	\label{t:maincomplex}
	Let $X,Y$ be uncountable Polish spaces, $\bbg$ be a class of subsets of Polish spaces which is closed under continuous preimages, finite unions and intersections and $\mathbf{\Pi}^0_1 \cup \mathbf{\Sigma}^0_1 \subset \bbg$.   Suppose that $\Phi:\bbg(X) \to \mathbf{\Pi}^1_1(Y)$ is $\bp$ on $\bbg$ and that $\mathcal{F}^\Phi$ is nicely $\bs$-hard on $\bbg$. 
	Then the family $\mathcal{U}^\Phi$ is $\mathbf{\Sigma}^1_2$-hard on $\bbg$.
\end{theorem}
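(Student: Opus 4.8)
The plan is to establish the lower bound by reducing a suitable $\mathbf{\Sigma}^1_2$-complete set to $\mc{U}^\Phi$; concretely, I would exhibit a single $\mathbf{B}\in\bbg(\oom\times(\oom\times X))$ whose $z$-slices $\mathbf{B}_z\in\bbg(\oom\times X)$ form a continuous reduction, so that $z\mapsto\mathbf{B}_z$ is continuous and $\{z:\mathbf{B}_z\in\mc{U}^\Phi\}$ is $\mathbf{\Sigma}^1_2$-complete. First I would record where the two quantifiers of $\mathbf{\Sigma}^1_2=\exists^{\oom}\mathbf{\Pi}^1_1$ enter. Because $\Phi$ is $\bp$ on $\bbg$, the bundle $\bar\Phi(\mathbf{B}_z)\subseteq\oom\times Y$ is $\bp$ uniformly in $z$, and "$\mathbf{B}_z\in\mc{U}^\Phi$" unfolds as "there is a real $d$ coding a total Borel function $\oom\to Y$ whose graph is contained in the $\bp$ set $\bar\Phi(\mathbf{B}_z)$", i.e. an $\exists^{\oom}$ over a $\bp$ matrix. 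Thus the outer existential is carried by the Borel code of the uniformizing function — it is built into the definition of $\mc{U}^\Phi$ — while the inner $\mathbf{\Pi}^1_1$ must be produced from the "nicely $\bs$-hard" hypothesis combined with the $\bp$ complexity of $\bar\Phi$; in passing this also gives the upper bound, so the substance of the theorem is the hardness.

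For the master problem I would take a $\mathbf{\Sigma}^1_2$-complete set of uniformization type. Fix a $\bp$ set $\mathbf{W}\subseteq\oom\times\oom$ universal for $\bp$ subsets of $\oom$ (so $\mathrm{proj}_1(\mathbf{W})$ is $\mathbf{\Sigma}^1_2$-complete), and a fixed $\bp$ set $F\subseteq\oom\times\oom$ with $\mathrm{proj}_1(F)=\oom$ that admits no full Borel uniformization (this is classical). For $z\in\oom$ put $G_z:=(\oom\times\mathbf{W}_z)\cup F$, a $\bp$ subset of $\oom\times\oom$ with full first-coordinate projection, depending continuously on $z$. If $\mathbf{W}_z\neq\emptyset$, a constant map uniformizes $G_z$; if $\mathbf{W}_z=\emptyset$, then $G_z=F$ has no full Borel uniformization. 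Hence $S_0:=\{z:G_z\text{ admits a full Borel uniformization}\}=\mathrm{proj}_1(\mathbf{W})$ is $\mathbf{\Sigma}^1_2$-complete, and a code for $G_z$ can be computed continuously from $z$.

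The heart of the argument — and where I expect the real difficulty — is Step 2: constructing $\mathbf{B}_z\in\bbg(\oom\times X)$, continuously in $z$, so that $\bar\Phi(\mathbf{B}_z)$ admits a full Borel uniformization if and only if $G_z$ does (thinking of $Y$ concretely as $\oom$). The plan is to realize $\bar\Phi(\mathbf{B}_z)$ as a "side by side" assembly over $\oom$ of instances furnished by the "nicely $\bs$-hard" hypothesis: the admissible analytic sets $A$ are used to prescribe the supports $\{s:\Phi(B_s)\neq\emptyset\}$, the assembly is carried out with continuous preimages and finite unions and intersections (which keep us inside $\bbg$, since $\mathbf{\Pi}^0_1\cup\mathbf{\Sigma}^0_1\subseteq\bbg$ and $\bbg$ is closed under these), and the analytic "cores" $D\subseteq\bar\Phi(B)$ with $\mathrm{proj}_1(D)$ equal to the support serve a double purpose — certifying that all auxiliary supports remain analytic, so that the complexity bookkeeping stays within $\mathbf{\Sigma}^1_2$, and providing the means to transport a full Borel uniformization of $\bar\Phi(\mathbf{B}_z)$ down to one of $G_z$ and back. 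The obstacle is precisely this faithful transfer: "nicely $\bs$-hard" only lets me prescribe, via analytic sets, the supports of the bundles $\bar\Phi(B)$ and not their fibers, whereas $S_0$ is genuinely about Borel-selecting fiber elements of a $\bp$ bundle with full projection; making the assembly so that a Borel selection across $\oom$ inside $\bar\Phi(\mathbf{B}_z)$ exists exactly when $G_z$ is Borel-uniformizable is what forces one to use the full strength of "nicely" $\bs$-hard — the sets $D$, as opposed to plain $\bs$-hardness — and is the technical core of \cite[Theorem~1.6]{todorvcevic2021complexity}. Granting Step 2 the theorem is immediate: $z\in S_0\iff\mathbf{B}_z\in\mc{U}^\Phi$ with $z\mapsto\mathbf{B}_z$ continuous and $\mathbf{B}\in\bbg(\oom\times(\oom\times X))$, so $\{z:\mathbf{B}_z\in\mc{U}^\Phi\}$ is $\mathbf{\Sigma}^1_2$-hard, which is exactly the statement that $\mc{U}^\Phi$ is $\mathbf{\Sigma}^1_2$-hard on $\bbg$.
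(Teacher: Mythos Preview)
The paper does not prove this theorem at all: it is quoted verbatim from \cite{todorvcevic2021complexity} and used as a black box in the proof of \cref{pr:forH}. There is therefore no ``paper's own proof'' to compare your attempt against.

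As for your proposal on its own terms: it is not a proof but an outline that explicitly punts on the one step that carries all the content. You set up a reasonable $\mathbf{\Sigma}^1_2$-complete master problem $S_0$ (Borel uniformizability of a parametrized $\bp$ set), and you correctly identify that the outer existential in $\mathbf{\Sigma}^1_2$ is absorbed by the code of the uniformizing function while the inner $\bp$ comes from $\Phi$ being $\bp$ on $\bbg$. But your Step~2 --- building $\mathbf{B}_z$ so that $\bar\Phi(\mathbf{B}_z)$ is Borel-uniformizable iff $G_z$ is --- is exactly the theorem, and you say only that it ``is the technical core of \cite[Theorem~1.6]{todorvcevic2021complexity}'' and then write ``Granting Step~2 the theorem is immediate.'' That is circular: you have restated the conclusion as a hypothesis. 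The genuine work in \cite{todorvcevic2021complexity} is a concrete construction that encodes an arbitrary $\bp$ tree of fiberwise $\Phi$-instances, arranged so that a full Borel uniformization of $\bar\Phi$ forces a Borel choice of an infinite branch (and conversely), and this is where the ``nicely'' part of the hypothesis --- the analytic witness sets $D$ sitting inside $\bar\Phi(B)$ --- is actually exploited. Your proposal gestures at this role of $D$ but does not supply the construction.
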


Let us identify infinite subsets of $\N$ with their increasing enumeration. If $x,y \in [\mathbb{N}]^{\mathbb{N}}$ let us use the notation $y \leq^\infty x$ in the case the set $\{n:y(n) \leq x(n)\}$ is infinite and $y \leq^* x$ if it is co-finite. Set $\mathcal{D}=\{(x,y):y \leq^\infty x\}.$ It follows form the fact that $\mathcal{G}_S$ restricted to sets of the form $\mathcal{D}_x$ has a Borel $3$-coloring that the graphs $\Homac(T_\Delta,\mathcal{G}_S\restriction \mathcal{D}_x)$ admit a Borel $3$-coloring, uniformly in $x$:

\begin{lemma} 
	\label{l:nondom} There exists a Borel function $f_{dom}: \omm \to \oom$ so that for each $x\in \omm$ we have $f_{dom}(x)=\langle c_0,\dots,c_{\Delta-1}\rangle$ with $c_i \in \mb{BC}(\omm)$, $\mb{A}(\omm)_{c_i}$ are $\Homac(T_\Delta,\mathcal{G}_S)$-independent subsets of $V(\Homac(T_\Delta,\mathcal{G}_S))$ for every $i<\Delta$ and \[V(\Homac(T_\Delta,\mathcal{G}_S\restriction \mathcal{D}_x)) = \bigcup^{\Delta-1}_{i=0} \mb{A}(\omm)_{c_i}.\]   
\end{lemma}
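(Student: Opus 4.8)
The plan is to transport, uniformly in $x$, a Borel $3$-coloring of the target graph $\mathcal{G}_S\restriction\mathcal{D}_x$ to a Borel $\Delta$-coloring of $\Homac(T_\Delta,\mathcal{G}_S\restriction\mathcal{D}_x)$ along the map $\Root$, and then to repackage the resulting color classes as Borel codes in an $x$-uniform way using \cref{f:prel}. As recalled above, $\mathcal{G}_S\restriction\mathcal{D}_x$ admits a Borel $3$-coloring, and inspecting that construction shows it may be carried out uniformly in $x$: there is a Borel set $\mathrm{Col}\subseteq\omm\times\omm\times 3$ whose section $\mathrm{Col}_x$ over each $x$ is the graph of a map $\gamma_x\colon\mathcal{D}_x\to 3$ that is a proper coloring of $\mathcal{G}_S\restriction\mathcal{D}_x$; since $\Delta>2$ it is in particular a proper $\Delta$-coloring.

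First I would push $\gamma_x$ through $\Root$. By \cref{pr:chromatic} (and its proof) applied with target graph $\mathcal{G}_S\restriction\mathcal{D}_x$, the map $\Root\colon h\mapsto h(1)$ is a Borel homomorphism from $\HOM(T_\Delta,\mathcal{G}_S\restriction\mathcal{D}_x)$ to $\mathcal{G}_S\restriction\mathcal{D}_x$, hence also from its restriction $\Homac(T_\Delta,\mathcal{G}_S\restriction\mathcal{D}_x)$ to $\mathcal{G}_S\restriction\mathcal{D}_x$; moreover $\Root$ maps $V(\Homac(T_\Delta,\mathcal{G}_S\restriction\mathcal{D}_x))$ into $\mathcal{D}_x=\dom(\gamma_x)$, since every $h$ in this graph is a homomorphism into $\mathcal{G}_S\restriction\mathcal{D}_x$ and so has range inside $\mathcal{D}_x$. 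Consequently $h\mapsto\gamma_x(\Root(h))$ is a well-defined Borel proper $3$-coloring of $\Homac(T_\Delta,\mathcal{G}_S\restriction\mathcal{D}_x)$. Accordingly, for $x\in\omm$ and $i<\Delta$ I set
$$A^x_i=\{\,h\in V(\Homac(T_\Delta,\mathcal{G}_S\restriction\mathcal{D}_x)):i<3\ \text{and}\ \gamma_x(\Root(h))=i\,\},$$
so that $A^x_i=\emptyset$ whenever $3\le i<\Delta$. Three things then have to be verified. (i) $\{(x,h):h\in A^x_i\}$ is a Borel subset of $\omm\times(\omm)^{\Gamma_\Delta}$: this is a routine computation, since ``$h$ is a homomorphism from $T_\Delta$ to $\mathcal{G}_S$'', ``$h$ lies in the free part of $(\omm)^{\Gamma_\Delta}$'' and ``$\operatorname{ran}(h)\subseteq\mathcal{D}_x$'' are Borel conditions on $(x,h)$, the map $\Root$ is continuous, and $\mathrm{Col}$ is Borel. (ii) For fixed $x$ the sets $A^x_i$, $i<\Delta$, are pairwise disjoint and their union is $V(\Homac(T_\Delta,\mathcal{G}_S\restriction\mathcal{D}_x))$, because $\gamma_x$ is a $3$-coloring defined on all of $\mathcal{D}_x$, hence on $\Root[V(\Homac(T_\Delta,\mathcal{G}_S\restriction\mathcal{D}_x))]$. (iii) Each $A^x_i$ is $\Homac(T_\Delta,\mathcal{G}_S)$-independent: a homomorphism $T_\Delta\to\mathcal{G}_S$ with range inside $\mathcal{D}_x$ is literally the same object as a homomorphism $T_\Delta\to\mathcal{G}_S\restriction\mathcal{D}_x$, the $\HOM$-edge relation only sees the restriction of the target to the range, and ``range inside $\mathcal{D}_x$'' is invariant under the $\Gamma_\Delta$-action; hence $\Homac(T_\Delta,\mathcal{G}_S\restriction\mathcal{D}_x)$ is the subgraph of $\Homac(T_\Delta,\mathcal{G}_S)$ induced on $V(\Homac(T_\Delta,\mathcal{G}_S\restriction\mathcal{D}_x))$ (in fact a union of connected components), so a color class of the former, such as $A^x_i$, is an independent set of the latter.

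Finally I would extract the codes. Fix a Borel isomorphism of $(\omm)^{\Gamma_\Delta}$ with $\omm$ and use it to regard $V(\Homac(T_\Delta,\mathcal{G}_S))$ and each $A^x_i$ as Borel subsets of $\omm$; by (i), $\{(x,z):z\in A^x_i\}\in\bbo(\omm\times\omm)$. Applying the third item of \cref{f:prel} with $P=X=\omm$ yields, for each $i<\Delta$, a Borel map $f_i\colon\omm\to\oom$ with $\operatorname{ran}(f_i)\subseteq\mb{BC}(\omm)$ and $\mb{A}(\omm)_{f_i(x)}=A^x_i$ for every $x$; by (ii) and (iii) these codes have exactly the properties asked for in the statement. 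Composing with a fixed Borel bijection between $(\oom)^{\Delta}$ and $\oom$, the map $f_{dom}(x)=\langle f_0(x),\dots,f_{\Delta-1}(x)\rangle$ is as required. I expect the only genuinely delicate point to be the very first step — confirming that the known Borel $3$-coloring of $\mathcal{G}_S\restriction\mathcal{D}_x$ is uniform in $x$, i.e.\ witnessed by a single Borel set $\mathrm{Col}$ — together with the routine but slightly tedious Borelness checks in (i); everything after that is immediate from \cref{pr:chromatic}, the ``induced subgraph'' observation in (iii), and \cref{f:prel}.
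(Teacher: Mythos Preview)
Your proposal is correct and follows essentially the same approach as the paper: pull back the uniform Borel $3$-coloring $c'\colon\mathcal{D}\to 3$ of $\mathcal{G}_S\restriction\mathcal{D}_x$ (which the paper cites from \cite[Lemma~4.5]{todorvcevic2021complexity}) along $\Root$ to obtain a Borel map $c(x,h)=c'(x,\Root(h))$, and then apply \cref{f:prel} to extract codes. Your write-up is more explicit than the paper's about why the resulting color classes are independent in the ambient graph $\Homac(T_\Delta,\mathcal{G}_S)$ rather than just in the restriction, but the argument is the same.
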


\begin{proof}
	Note that it suffices to construct a Borel map $c:[\mathbb{N}]^\mathbb{N} \times \Homac(T_\Delta,\mathcal{G}_S) \to \Delta$ that is a coloring of the graph $\Homac(T_\Delta,\mathcal{G}_S \restriction \mathcal{D}_x)$ for each $x$: indeed, we can use \cref{f:prel} for $(B_i)_x=\{(x,h):c(x,h)=i\}$ to obtain Borel maps $f_i:\omm \to \oom$ so that for every $x \in \omm$ we have $\mb{A}(\omm)_{f_i(x)}=B_i$ and let $f_{dom}(x)=\langle f_0(x),\dots,f_{\Delta-1}(x)\rangle$.
	
	It has been established in \cite[Lemma 4.5]{todorvcevic2021complexity} (see also \cite{di2015basis}) that there exists a Borel map $c':\mathcal{D} \to 3$ such that for each $x$ it is a $3$-coloring of the graph $\mathcal{G}_S \restriction \mathcal{D}_x$. As the map $\Root:\Homac(T_\Delta,\mathcal{G}_S) \to \mathcal{G}_S$ is a Borel homomorphism by \cref{pr:chromatic}, it follows that the map $c(x,h):=c'(x, \Root(h))$ is the desired $\Delta$-coloring (in fact, $3$-coloring).   
\end{proof}
Let $\mathcal{H}$ be the graph on $\oom \times V(\Homac(T_\Delta,\mathcal{G}_S))$ defined by making $(x,h), (x',h')$ adjacent if $x=x'$ and $h$ is adjacent to $h'$ in $\Homac(T_\Delta,\mathcal{G}_S)$. Fixing a Polish topology on $V(\Homac(T_\Delta,\mathcal{G}_S))$ that is compatible with the Borel structure, we might assume that $V(\mathcal{H})$ is a Polish space. 

Putting together results proved in the previous sections, we get the following corollary.

\begin{corollary}
	\label{c:shiftchrom}
	The Borel chromatic number of $\Homac(T_\Delta,\fG_S)$ is $\Delta+1$.
\end{corollary}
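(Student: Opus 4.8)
The plan is to sandwich $\chi_B(\Homac(T_\Delta,\mathcal{G}_S))$ between $\Delta+1$ and $\Delta+1$, using for the lower bound \cref{t:implication} fed with the Galvin--Prikry theorem, and for the upper bound the Kechris--Solecki--Todor\v{c}evi\'c bound for bounded-degree Borel graphs. Everything needed has already been set up in the excerpt, so this is an assembly argument rather than a new construction.

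\emph{Lower bound.} The shift graph $\mathcal{G}_S$ lives on $V(\mathcal{G}_S)=[\mathbb{N}]^\mathbb{N}$ and is a locally countable Borel graph. By \cref{t:galvinprikry} it admits no Ramsey measurable finite coloring; in particular its Ramsey measurable chromatic number is $>\Delta$. Since $V(\mathcal{G}_S)=[\mathbb{N}]^\mathbb{N}$, the ``in particular'' clause of \cref{t:implication} applies with $\mathcal{H}=\mathcal{G}_S$ and gives $\Deltachrom(\mathcal{G}_S)>\Delta$, hence $\chi_B(\Homac(T_\Delta,\mathcal{G}_S))>\Delta$, i.e.\ $\chi_B(\Homac(T_\Delta,\mathcal{G}_S))\ge \Delta+1$. (In particular this forces $\Homac(T_\Delta,\mathcal{G}_S)$ to be nonempty, so the quantity is genuinely defined.)

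\emph{Upper bound.} By definition $\Homac(T_\Delta,\mathcal{G}_S)$ is a restriction of $\HOM(T_\Delta,\mathcal{G}_S)=\HOM(\Gamma_\Delta,S_\Delta,\mathcal{G}_S)$, hence a Borel graph whose vertex degrees are bounded by $|S_\Delta\cup S_\Delta^{-1}|=\Delta$ (each $\alpha_i$ is an involution, so $S_\Delta^{-1}=S_\Delta$). By the theorem of Kechris--Solecki--Todor\v{c}evi\'c \cite{KST}, every Borel graph of degree at most $\Delta$ has Borel chromatic number at most $\Delta+1$, so $\chi_B(\Homac(T_\Delta,\mathcal{G}_S))\le \Delta+1$.

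Combining the two bounds yields $\chi_B(\Homac(T_\Delta,\mathcal{G}_S))=\Delta+1$. There is no real obstacle: the mathematical content sits entirely in \cref{t:implication} and in the classical fact (\cref{t:galvinprikry}) that the shift graph on $[\mathbb{N}]^\mathbb{N}$ has no Ramsey, hence no Borel, finite coloring. The only points to verify with care are that $\mathcal{G}_S$ is locally countable and that its vertex space is $[\mathbb{N}]^\mathbb{N}$, so that precisely the Ramsey-measurable instance of the ``in particular'' clause of \cref{t:implication} is applicable.
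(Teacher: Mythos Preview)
Your proof is correct and follows essentially the same route as the paper: the paper's one-line proof just cites \cref{f:provably}, \cref{t:galvinprikry}, and \cref{t:implication} for the lower bound (which amounts exactly to your invocation of the Ramsey-measurable ``in particular'' clause of \cref{t:implication}), leaving the upper bound implicit from the Kechris--Solecki--Todor\v{c}evi\'c degree bound recalled in the introduction. Your write-up simply makes both halves explicit.
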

\begin{proof}
	This follows from \cref{f:provably}, \cref{t:galvinprikry}, and \cref{t:implication}. 
\end{proof}

Now we are ready to prove the following.
\begin{proposition}
	\label{pr:forH}
	There exists a Borel set $C \subseteq \oom \times \oom \times V(\Homac(T_\Delta,\mathcal{G}_S))$ so that the set $\{s: \chi_B(\mathcal{H} \restriction C_s)\leq \Delta\}$ is $\mb{\Sigma}^1_2$-hard.
\end{proposition}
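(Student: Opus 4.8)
The plan is to apply \cref{t:maincomplex} to a uniformization operator $\Phi$ built from Borel $\Delta$-colorings of $\Homac(T_\Delta,\mathcal{G}_S)$, and then to recognize the resulting uniform family $\mathcal{U}^\Phi$ as exactly the property in the statement. First I would set $X=V(\Homac(T_\Delta,\mathcal{G}_S))$ with its fixed compatible Polish topology; this is an uncountable standard Borel space (a countable $\Delta$-regular acyclic graph is a forest with all components infinite, hence Borel $2$-colorable, contradicting \cref{c:shiftchrom}). Let $Y=\oom$, a point of which codes, through a recursive pairing, a tuple $\langle c_0,\dots,c_{\Delta-1}\rangle$ of Borel codes for subsets of $X$ in the sense of \cref{f:prel} and \cref{l:nondom}, and let $\bbg=\mathbf{\Delta}^1_1$ be the Borel sets, which meets the closure requirements of \cref{t:maincomplex}. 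For Borel $A\subseteq X$ let $\Phi(A)$ be the set of tuples $\langle c_0,\dots,c_{\Delta-1}\rangle$ such that each $c_i\in\mb{BC}(X)$, the coded sets $\mb{A}(X)_{c_i}$ cover $A$, and each $\mb{A}(X)_{c_i}\cap A$ is $\Homac(T_\Delta,\mathcal{G}_S)$-independent. Unwinding \cref{f:prel} (using $\mb{A}=\mb{C}$ on codes) shows $\Phi(A)$ is $\mathbf{\Pi}^1_1$, uniformly in a Borel parameter for $A$, so $\Phi$ is $\mathbf{\Pi}^1_1$ on $\bbg$; and $\mathcal{F}^\Phi=\{A\in\bbg(X):\chi_B(\Homac(T_\Delta,\mathcal{G}_S)\restriction A)\le\Delta\}$, because a Borel $\Delta$-coloring has Borel colour classes which by \cref{f:prel} admit codes forming a tuple in $\Phi(A)$, and conversely a tuple in $\Phi(A)$ yields the coloring $h\mapsto\min\{i:h\in\mb{A}(X)_{c_i}\}$.

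Granting these, \cref{t:maincomplex} hands us a Borel set $C\subseteq\oom\times(\oom\times X)=\oom\times V(\mathcal{H})$ with $\{s:C_s\in\mathcal{U}^\Phi\}$ being $\mathbf{\Sigma}^1_2$-hard, and I claim this $C$ works. The key point is that $\mathcal{H}$ has no edges between distinct fibres $\{x\}\times X$, so for Borel $E\subseteq\oom\times X$ the graph $\mathcal{H}\restriction E$ is the disjoint union over $x\in\oom$ of the graphs $\Homac(T_\Delta,\mathcal{G}_S)\restriction E_x$. Hence a Borel $\Delta$-coloring of $\mathcal{H}\restriction E$ restricts fibrewise to Borel $\Delta$-colorings of these, whose colour classes have, by \cref{f:prel}, codes depending Borel-ly on $x$ — that is precisely a full Borel uniformization of $\bar\Phi(E)$ — and conversely a full Borel uniformization $x\mapsto\langle c_0(x),\dots,c_{\Delta-1}(x)\rangle$ of $\bar\Phi(E)$ reassembles, again using $\mb{A}=\mb{C}$ on codes to keep the outcome Borel, into a Borel $\Delta$-coloring of $\mathcal{H}\restriction E$. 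Therefore $C_s\in\mathcal{U}^\Phi\iff\chi_B(\mathcal{H}\restriction C_s)\le\Delta$, which is the claim.

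The hard part is the last hypothesis of \cref{t:maincomplex}, namely that $\mathcal{F}^\Phi$ is nicely $\bs$-hard on $\bbg$; this is where \cref{l:nondom} and \cref{c:shiftchrom} do the work, along the lines of the domination argument of \cite{todorvcevic2021complexity}. Given $A\in\bs(\oom)$, I would write $s\in A$ as ``the tree $T_s$ has an infinite branch'' and run the construction of \cite{todorvcevic2021complexity} with the shift graph replaced throughout by $\Homac(T_\Delta,\mathcal{G}_S)$: attach to each $s$ a Borel set $B_s\subseteq X$ arranged so that for every branch $x$ of $T_s$ one has $B_s\subseteq V(\Homac(T_\Delta,\mathcal{G}_S\restriction\mathcal{D}_x))$, while if $T_s$ has no branch then $B_s$ retains a full Borel copy of $\Homac(T_\Delta,\mathcal{G}_S)$. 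In the first case $f_{dom}(x)$ from \cref{l:nondom} is a tuple of codes witnessing $B_s\in\mathcal{F}^\Phi$, and the set $D$ of all such tuples over all branches is $\bs$, is contained in $\bar\Phi(B)$, and has $D_s\neq\emptyset$ exactly when $s\in A$; in the second case \cref{c:shiftchrom} forces $\chi_B(\Homac(T_\Delta,\mathcal{G}_S)\restriction B_s)=\Delta+1$, so $B_s\notin\mathcal{F}^\Phi$. The obstacle is thus purely this construction — making the presence of a branch collapse $B_s$ into a $\mathcal{D}_x$-piece and its absence preserve a full copy of the homomorphism graph, uniformly and Borel-measurably in $s$ — which, given that \cref{l:nondom} is the exact analogue for $\Homac(T_\Delta,\mathcal{G}_S)$ of the domination lemma used in \cite{todorvcevic2021complexity}, proceeds as there, everything else being routine manipulation of Borel codes via \cref{f:prel}.
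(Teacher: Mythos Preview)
Your proposal is correct and follows essentially the same approach as the paper: the same $X$, $Y$, $\bbg$, $\Phi$, the same identification of $\mathcal{U}^\Phi$ with the coloring property for $\mathcal{H}$, and the same route to nice $\bs$-hardness via the domination set $B'_s=\{y:(\forall x\leq^* y)(x\notin F_s)\}$ from \cite{todorvcevic2021complexity} pulled back as $B_s=V(\Homac(T_\Delta,\mathcal{G}_S\restriction B'_s))$, with $f_{dom}$ supplying the analytic witness set $D$ and \cref{c:shiftchrom} handling the case $s\notin A$. The paper is simply more explicit about the formulas for $B'$, $B$, and $D$, and cites \cite[Lemma 4.6]{todorvcevic2021complexity} for the routine verifications (the $\mathbf{\Pi}^1_1$-on-$\mathbf{\Delta}^1_1$ check and the $\mathcal{U}^\Phi$ equivalence) that you sketch directly.
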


\begin{proof}
	We check the applicability of \cref{t:maincomplex}, with $X=V(\Homac(T_\Delta,\mathcal{G}_S))$, $Y=\oom$, $\bbg=\mathbf{\Delta}^1_1$ and  \[\Phi(A)=\{c:(\forall x,y \in A)\Big(c=\langle c_0,\dots,c_{\Delta-1}\rangle \text{, $c_i \in \mb{BC}(X)$}, x \in \bigcup_i  \mb{C}(X)_{c_i} \]\[ \text{and }(x,y) \in \Homac(T_\Delta,\mathcal{G}_S) \Rightarrow (\forall i)\big(\lnot(x,y \in \mb{A}(X)_{c_i})\big)\Big) \},\]
	in other words, $\Phi(A)$ contains the Borel codes of the Borel $\Delta$-colorings of $\Homac(T_\Delta,\mathcal{G}_S) \restriction A$. Let $A \subseteq \oom$ be analytic and take a closed set $F \subset \oom \times \omm$ so that $\proj_0(F)=A$. Let \[B'=\{(s,y):(\forall x \leq^* y)(x \not \in F_s)\}.\]
	
	\begin{lemma}
		\label{l:needed}  
		\begin{enumerate}
			\item \label{c:b'Borel} $B' \in \mathbf{\Pi}^0_2$.
			\item $\Phi$ is $\mathbf{\Pi}^1_1$ on $\mathbf{\Delta}^1_1$.
			
			\item \label{c:ufi} For any Borel set $C$ we have $C \in \mathcal{U}^\Phi$ if and only if $\chi_B(\mathcal{H} \restriction C)\leq\Delta$.  
		\end{enumerate}
	\end{lemma} 
	\begin{proof}
		The first statement has been proved in the stated form in \cite[Lemma 4.6]{todorvcevic2021complexity}. The second statements also follow from the straightforward modification of the argument presented in \cite[Lemma 4.6]{todorvcevic2021complexity}: 
		in fact, $\Phi$ is $\mathbf{
			\Pi}^1_1$ on $\mathbf{\Delta}^1_1$ if $\Homac(T_\Delta,\mathcal{G}_S)$ is replaced with any Borel graph, and the last statement works for every Borel graph on a product space, where edges only go between points in the same vertical section, in particular, for $\mathcal{H}$.
	\end{proof}
	Now define 
	\[B=\{(s,h):h \in \Homac(T_\Delta,\mathcal{G}_S \restriction B'_s)\},\]
	and \[D=\{(s,c):s \in A \text{ and }(\exists x \in F_s) (f_{dom} (x)=c)\},\] where $f_{dom}$ is the function from \cref{l:nondom}. 
	
	We will show that $B$ and $D$ witness that $\mathcal{F}^\Phi$ is nicely $\mathbf{\Sigma}^1_1$-hard. The set $B$ is Borel by \eqref{c:b'Borel} of the lemma above, while by its definition $D$ is analytic. 
	
	Suppose that $s \in A$. Then for each $x' \in F_s$ we have 
	$$B'_s= \{y:(\forall x \leq^* y)(x \not \in F_s)\} \subset \{y:y \leq^\infty x'\}=\fD_{x'}.$$
	Thus, by \cref{l:nondom} $B_s \in \mathcal{F}^\Phi$ and $D_s \not = \emptyset$. Moreover, if $c \in D_s$ then for some $x \in F_s$ we have $f_{dom}(x)=c$ with $c=\langle c_0,\dots,c_{\Delta-1}\rangle$, again by \cref{l:nondom} we have $B_s \subseteq \bigcup^{\Delta-1}_{i=0} \mb{A}(\omm)_{c_i}$  and the sets $\mb{A}(\omm)_{c_i}$ are $\Homac(T_\Delta,\mathcal{G}_S)$-independent, thus, $D_{s} \subseteq \Phi(B_s)$. Conversely, if $s \not \in A$ then $F_s=D_s=\emptyset$ and $B'_s=\omm$. Then $B_s=\Homac(T_\Delta,\mathcal{G}_S)$, which set does not admit a Borel $\Delta$-coloring by \cref{c:shiftchrom}. Consequently, $\Phi(B_s)=\emptyset$. 
	
	So, \cref{t:maincomplex} is applicable and it yields a Borel set $C \subseteq \oom \times \oom \times \omm$ so that $\{s:C_s \in \mathcal{U}^\Phi\}$ is $\mb{\Sigma}^1_2$-hard. This implies the desired conclusion by \eqref{c:ufi} of the Lemma above. 
\end{proof}

We can prove \cref{t:mainc}. Let us restate the theorem, describing precisely what we mean by ``form a $\mathbf{\Sigma}^1_2$-complete set".  

\begin{customthm}{\ref{t:mainc}}
	Let $X$ be an uncountable Polish space and $\Delta>2$.
	The set \[S=\{c\in \mathbf{BC}(X^2): \text{$\mathbf{C}(X^2)_c$ is a $\Delta$-regular acyclic Borel graph with Borel chromatic number $\leq \Delta$} \}\]
	is $\mathbf{\Sigma}^1_2$-complete. 
	
	In particular, Brooks' theorem has no analogue for Borel graphs in the following sense: there is no countable family $\{\mathcal{H}_i\}_{i \in I}$ of Borel graphs such that for any  Borel graph $\mathcal{G}$ with $\Delta(\mathcal{G})\leq \Delta$ we have $\chi_B(\mathcal{G})>\Delta$ if and only if for some $i \in I$ the graph $\mathcal{G}$ contains a Borel homomorphic copy of $\mathcal{H}_i$.
\end{customthm}
\begin{proof}[Proof of \cref{t:mainc}]
	
	First, note that using the fact that the codes of Borel functions between Polish spaces form a $\mathbf{\Pi}^1_1$ set, it is straightforward to show that $S$ is a $\mathbf{\Sigma}^1_2$ set (see e.g., \cite[Proof of Theorem 1.3]{todorvcevic2021complexity}). Similarly, one can check that if there was a collection $\{\mathcal{H}_i:i \in I\}$ as above, then this would yield that the set $S$ is $\mathbf{\Pi}^1_2$. Thus, in order to show both parts of the theorem it suffices to prove that $S$ is $\mathbf{\Sigma}^1_2$-hard.
	
	Second, by \cite{sabok}, it follows that if we replace continuous functions with Borel ones in the definition of $\mathbf{\Sigma}^1_2$-hard sets we get the same class. As uncountable Polish spaces are Borel isomorphic, it is enough to show that $S$ is $\mathbf{\Sigma}^1_2$-hard for some $X$.
	
	Take the graph $\mathcal{H}$ and the set $C$ from \cref{pr:forH}.  Note that the graph $\Homac(T_\Delta,\mathcal{G}_S)$ is acyclic and has degrees $\leq \Delta$ by its construction. Therefore, the same holds for $\mathcal{H} \restriction C_s$ for each $s$. Using that the sets $D_i=\{(s,x,h):\text{the degree of $(x,h)$ in $\mathcal{H} \restriction C_s$ is $i$}\}$ are Borel, it is straightforward to modify $\mathcal{H}$ so that we obtain a Borel graph $\mathcal{H}_\Delta$ on a Polish space of the form $X=\N^\N \times Y$ such that for each $s$ the graph $\mathcal{H}_\Delta \restriction \{s\} \times Y$ is $\Delta$-regular, acyclic and that the set $\{s:\chi_B(\mathcal{H}_\Delta \restriction \{s\} \times Y))
	\leq \Delta\}=\{s:\chi_B(\mathcal{H}\restriction C_s)\leq \Delta\}$ (indeed, to vertices in $D_i$ we can attach $\Delta-i$-many disjoint infinite rooted trees that are $\Delta$-regular except for the root, which has degree $\Delta-1$, in a Borel way). The third part of \cref{f:prel} gives a Borel reduction from the former set to $S$. Since the latter set is $\mathbf{\Sigma}^1_2$-hard, this yields the desired result by using \cite{sabok} as above.
\end{proof}

\subsection{Hyperfiniteness}

In this section we use Baire category arguments to obtain a new proof of \cref{t:hyperfiniteness}.
\begin{customthm}{\ref{t:hyperfiniteness}}
	There exists a hyperfinite $\Delta$-regular acyclic Borel graph with Borel chromatic number $\Delta+1$.  
\end{customthm}

We will utilize a version of the graph $\mathbb{G}_0$ constructed in \cite{KST}. For $s\in 2^{<\mathbb N}$ define 
$$\mathbb G_s=\{(s^\frown (0)^\frown c,s^\frown (1)^\frown c):c\in 2^\mathbb{N}\}$$
on $2^\mathbb N$.
Fix some collection $(s_n)_{n\in \mathbb N}\subseteq 2^{<\mathbb N}$ such that $|s_n|=n$, i.e., $s_n\in 2^n$, for every $n\in \mathbb N$, together with a function $e:\mathbb{N}\to \Delta$ such that $(s_n)_{e(n)=i}\subseteq 2^{<\mathbb N}$ is dense in $2^{<\mathbb{N}}$ for every $i\in \Delta$. Set $\mathbb{G}_0=\bigcup_{n \in \N}\mathbb{G}_{s_n}$. Label an edge $\alpha_i$ if it is in the graph $\bigcup_{e(n)=i} \mathbb{G}_{s_n}$. Finally, write $\fH$ for the restriction of $\mathbb{G}_0$ to those vertices $x$ such that every vertex in the connected component of $x$ is adjacent to at least one edge of each label. Standard arguments yield the following claim.
\begin{claim}
	\label{cl:Bairecat}
	\begin{enumerate}
		\item $\mathcal{H}$ is acyclic and locally countable.
		\item $\mathcal{H}$ is defined on a comeager subset of $2^\N$.
		\item The Baire measurable edge-labeled chromatic number of $\mathcal{H}$ is infinite (in fact, uncountable).
		\item $\mathcal{H}$ is hyperfinite.
	\end{enumerate}
\end{claim}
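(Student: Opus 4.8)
Looking at \cref{cl:Bairecat}, I need to prove four properties of the graph $\mathcal{H}$ obtained by restricting the labeled version of $\mathbb{G}_0$ to vertices all of whose component-neighbors meet every label.

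\textbf{Plan.} The four assertions are exactly the ``standard $\mathbb{G}_0$ facts'' adapted to the labeled setting, so I would prove them in the order (1), (2), (4), (3), since the topological genericity statement (2) feeds the Baire-category argument in (3).

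\emph{Acyclicity and local countability (1).} Each $\mathbb{G}_{s_n}$ is a Borel partial matching: a vertex $x = s_n^\frown(j)^\frown c$ has at most one $\mathbb{G}_{s_n}$-neighbor, namely $s_n^\frown(1-j)^\frown c$. Hence a vertex $x$ can be adjacent in $\mathbb{G}_0$ only to one of the countably many $s_n^\frown(1-x(n))^\frown (x{\restriction}(n{+}1,\infty))$, giving local countability. For acyclicity one argues as in \cite[Section 6]{KST}: if $x,y$ are $\mathbb{G}_{s_m}$-adjacent and also $\mathbb{G}_{s_n}$-adjacent with $m\neq n$, then $x,y$ agree off coordinate $m$ and off coordinate $n$, forcing $x=y$, contradiction; and a closed walk in $\bigcup_n \mathbb{G}_{s_n}$, when one looks at the largest index $n$ of an edge used, must use that edge an even number of times and one checks this collapses the walk — so $\mathbb{G}_0$ (hence its subgraph $\mathcal{H}$) is acyclic. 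Since $\mathcal{H}$ is an induced subgraph these properties are inherited.

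\emph{Comeagerness of $\dom(\mathcal{H})$ (2).} Fix $i\in\Delta$. Because $(s_n)_{e(n)=i}$ is dense in $2^{<\N}$, the set $U_i = \bigcup_{e(n)=i}\bigl(N_{s_n^\frown(0)}\cup N_{s_n^\frown(1)}\bigr)$ (where $N_t$ is the basic clopen set of extensions of $t$) is open dense in $2^\N$; every $x\in U_i$ is incident to an $\alpha_i$-edge. Then $D=\bigcap_{i\in\Delta}U_i$ is comeager, and every $x\in D$ meets every label. But $\dom(\mathcal{H})$ is the set of $x$ whose \emph{whole component} lies in $D$; since $\mathbb{G}_0$ is acyclic and locally countable, components are countable, and the component map is Borel, so $\dom(\mathcal{H}) = \{x : [x]_{E_{\mathbb{G}_0}}\subseteq D\}$. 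One shows this is still comeager: the complement is $\{x : \exists n\ \exists$ a $\mathbb{G}_0$-path of length $\le n$ from $x$ to some $y\notin D\}$, a countable union over $n$ and over the finitely many edge-label-words $w$ of length $\le n$ of the sets $f_w^{-1}(2^\N\setminus D)$, where $f_w$ is the (continuous, measure- and category-preserving) partial homeomorphism given by flipping the coordinates dictated by $w$; each such set is meager, so the union is meager. This is the step I expect to require the most care — one must verify that the ``path-following'' maps $f_w$ are genuinely partial homeomorphisms of $2^\N$ (they flip finitely many coordinates, hence are, where defined, homeomorphisms onto open sets and preserve category) so that preimages of meager sets are meager.

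\emph{Hyperfiniteness (4) and infinite Baire chromatic number (3).} For (4): let $\mathcal{H}_n = \mathcal{H}\cap\bigcup_{m\le n}\mathbb{G}_{s_m}$. Each $\mathcal{H}_n$ is Borel with finite components — a component of $\bigcup_{m\le n}\mathbb{G}_{s_m}$ has size at most $2^{n+1}$ since only the first $n{+}1$ coordinates are ever altered by these edges — and $\bigcup_n\mathcal{H}_n = \mathcal{H}$, witnessing hyperfiniteness. For (3): suppose toward a contradiction that $c:\dom(\mathcal{H})\to k$ is a Baire-measurable vertex $k$-coloring such that no color class spans an edge of every label. By \cref{f:provably}-type Baire-category reasoning (or directly), there is a nonempty basic clopen set $N_t$ on which $c$ is constant, say $\equiv j$, on a comeager subset $C\subseteq N_t\cap\dom(\mathcal{H})$. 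Pick any label $i\in\Delta$; by density choose $s_n\supseteq t$ with $e(n)=i$. Then $\mathbb{G}_{s_n}$ is an $\alpha_i$-labeled partial matching whose graph is a homeomorphism between the two clopen halves $N_{s_n^\frown(0)}, N_{s_n^\frown(1)}$ of $N_{s_n}\subseteq N_t$; since $C$ is comeager in $N_t$, its preimages under both coordinate-projections of $\mathbb{G}_{s_n}$ are comeager in $N_{s_n^\frown(0)}$, so they intersect, producing an $\alpha_i$-edge $(x,y)$ with $x,y\in C$, i.e. $c(x)=c(y)=j$. Ranging over all $i\in\Delta$, color class $j$ spans an edge of every label — contradiction. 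Hence the Baire-measurable edge-labeled chromatic number is $>\Delta$ for every $\Delta$, i.e. infinite.

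With \cref{cl:Bairecat} in hand, \cref{t:hyperfiniteness} follows by applying \cref{t:forg0} to $\mathcal{H}$: parts (1)–(3) give that $\mathcal{H}$ is acyclic, locally countable, carries the $S_\Delta$-edge labeling with every label present at every vertex (after passing to $\mathcal{H}$, which enforces exactly this), and has $\Edgelabeldelta{\mathcal{H}}\ge$ its Baire-measurable edge-labeled chromatic number $=\infty>\Delta$; so $\chi_B(\Homed(T_\Delta,\mathcal{H}))>\Delta$, while \cref{pr:hypfin} gives that $\Homed(T_\Delta,\mathcal{H})$ is $\Delta$-regular, acyclic and hyperfinite, and \cref{pr:chromatic}/the $\Delta{+}1$ upper bound of Kechris–Solecki–Todorčević pins the Borel chromatic number at exactly $\Delta+1$.
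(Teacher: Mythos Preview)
Your proposal is correct and follows essentially the same approach as the paper, just with considerably more detail: the paper dispatches (3) by citing \cite[Proposition~6.2]{KST} and (4) by citing the inclusion $E_{\mathcal{H}}\subset\mathbb{E}_0$, whereas you spell out both arguments explicitly (your direct witness $\mathcal{H}_n=\mathcal{H}\cap\bigcup_{m\le n}\mathbb{G}_{s_m}$ for hyperfiniteness is a concrete unpacking of the same fact). The comeagerness argument in (2) is also the same---your open dense sets $U_i$ coincide with the paper's $\{x:\exists n\,(e(n)=i\wedge s_n\sqsubset x)\}$, and both of you then saturate under the countably many bit-flip homeomorphisms.
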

\begin{proof}
	The fact the $\mathcal{H}$ is locally countable is clear from its definition, while acyclicity follows from the assumption that $|s_n|=n$. To see the second part, note that the set $\{x \in 2^\mathbb{N}: \forall i \in \Delta \  \exists n \ (e(n)=i \land s_n \sqsubset x) \}$ is open and dense in $2^\N$. Now, $\mathcal{H}$ is the restriction of $\mathbb{G}_0$ to a set that is an intersection of the image of this set under countably many homeomorphisms of the form $s_n^\frown (i)^\frown c \mapsto s_n^\frown (1-i)^\frown c$, hence its vertex set is comeager. 
	
	The proof of the third part is identical to the proof of \cite[Proposition 6.2]{KST}.
	We include the argument for completeness.  
	Assume that $c:2^\N \to \aleph_0$ is a Baire measurable coloring and $j \in \Delta$ is arbitrary.
	Then, for some $i$, the set $c^{-1}(i)$ is non-meager.
	Since $c^{-1}(i)$ is Baire-measurable, there is some neighborhood $N_t$ such that $N_t \setminus c^{-1}(i)$ is comeager. In turn, there is some $n$ with $e(n)=j$ and $N_{s_n} \setminus c^{-1}(i)$ comeager.
	Since the map $s_n\concatt(i) \concatt r \mapsto s_n \concatt (1-i) \concatt r$ is category preserving from $N_{s_n}$ to $N_{s_n}$, there will be some $x,y \in c^{-1}(i) \cap N_{s_n}$, with $(x,y) \in \mathbb{G}_{s_n}$, in other words, the edge $(x,y)$ is labeled $j$.  
	
	Finally, the hyperfiniteness of $\mathcal{H}$ follows from the fact that $E_\mathcal{H} \subset \mathbb{E}_0$ (see, e.g., \cite[Proposition 1.3]{jackson2002countable}). \end{proof}
\begin{proof}[Proof of \cref{t:hyperfiniteness}]
	By \cref{cl:Bairecat} and \cref{pr:hypfin} the graph $\Homed(T_\Delta,\mathcal{H})$ is hyperfinite, $\Delta$-regular and acyclic. By \cref{t:forg0} and \cref{f:provably} its Borel chromatic number is $\Delta+1$.
\end{proof}

Note that the above theorem also implies Theorem~1.6 in \cite{conleyhyp}, namely, that there is no Borel version of the Lovász Local Lemma (LLL) even on hyperfinite graphs and if the probability of a bad event is polynomial in the degree of the dependency graph (for related results and precise definitions see \cite{OlegLLL} and \cite{Bernshteyn2021LLL}).  
In order to see this, observe that the sinkless orientation problem from \cite{brandt_etal2016LLL} can be thought of as an instance of the LLL as follows: Each edge corresponds to a random binary variable representing its orientation. At each node the bad event has probability $2^{-\Delta}$: all incident edges are oriented towards it.
It remains to observe that a Borel solution to the sinkless orientation problem implies easily a Borel solution to the edge grabbing problem which in turn, by \cref{rem:anti-game}, implies the existence of a Borel anti-game labeling.
However, it follows from the proof of \cref{t:forg0} that $\Homed(T_\Delta,\mathcal{H})$ does not admit a Borel anti-game labeling.

\subsection{Graph homomorphisms}
\label{subsec:measure}

In this section we prove \cref{thm:MainBorel} that we restate here for the convenience of the reader.

\MainBorel*

We remark that the first proof of the theorem (without the conclusion about hyperfiniteness) relied on a construction from the random graph theory, see \cite{brandt_chang_grebik_grunau_rozhon_vidnyaszky2021LCLs_on_trees_descriptive} for motivation and connection to the $\local$ model.
We sketch the construction here for completeness.
Fix $k\in \mathbb{N}$, large enough depending on $\Delta$, and consider $k\Delta$ pairings on a set $n$ sampled independently uniformly at random.
In other words, we have a $k\Delta$-regular graph and there is a canonical edge $\Delta$-labeling such that each vertex is adjacent to exactly $k$ edges of each color.
Now taking a \emph{local-global} limit of such graphs as $n\to \infty$ produces with probability $1$ an acyclic graphing with large edge-labeled chromatic number as needed, see \cite{bollobas,hatamilovaszszegedy}.

Before we prove \cref{thm:MainBorel}, we discuss graphs that are almost $\Delta$-colorable.

\paragraph{Almost $\Delta$-colorable graphs.}
As we have seen, in \cref{ss:gen}, being almost $\Delta$-colorable is (formally) a weaker condition than having chromatic number at most $\Delta$, but still allows us to use a version of Marks' technique. 
Similarly to the way the complete graph on $\Delta$-many vertices, $K_\Delta$, is maximal among graphs of chromatic number $\leq \Delta$, we show that there exists a maximal graph (under homomorphisms) that is almost $\Delta$-colorable. It turns out that the chromatic number of the maximal graph is $2\Delta-2$.

\begin{figure}
	\centering
	\includegraphics[width=.6\textwidth]{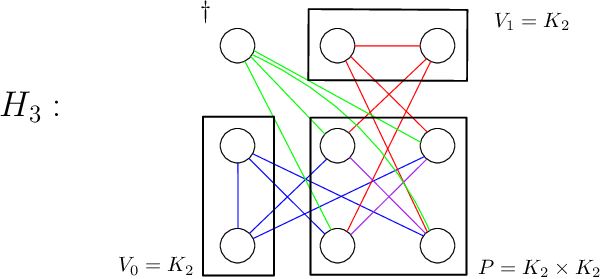}
	\caption{The maximal graph that is almost $\Delta$-colorable for $\Delta=3$.}
	\label{fig:h3}
\end{figure}

Let us describe the maximal examples of graphs that are almost $\Delta$-colorable for $\Delta>2$.
Recall that the (categorical) product $G \times H$ of graphs $G,H$ is the graph on $V(G) \times V(H)$, such that $((g,h),(g',h')) \in E(G \times H)$ if and only if $(g,g') \in E(G)$ and $(h,h') \in E(H)$.

Write $P$ for the product $K_{\Delta-1}\times K_{\Delta-1}$. Let $V_0$ and $V_1$ be vertex disjoint copies of $K_{\Delta-1}$.
We think of vertices in $V_i$ and $P$ as having labels from $[\Delta-1]$ and $[\Delta-1] \times [\Delta-1]$, respectively.
The graph $H_\Delta$ is the disjoint union of $V_0$, $V_1$, $P$ and an extra vertex $\dagger$ that is connected by an edge to every vertex in $P$, and additionally, if $v$ is a vertex in $V_0$ with label $i\in [\Delta-1]$, then we connect it by an edge with $(i',j)\in P$ for every $i'\not=i$ and $j\in [\Delta-1]$, and if $v$ is a vertex in $V_1$ with label $j\in \Delta-1$, then we connect it by an edge with $(i,j')\in P$ for every $j'\not=j$ and $i \in [\Delta-1]$. The graph $H_{3}$ is depicted in \cref{fig:h3}.

\begin{proposition}\label{pr:MinimalExample}
	\begin{enumerate}
		\item \label{prop:hdelta}$H_\Delta$ is almost $\Delta$-colorable.
		\item \label{prop:hdeltachrom} $\chi(H_\Delta)=2\Delta-2$.
		\item \label{prop:hdeltamax} A graph $G$ is almost $\Delta$ colorable if and only if it admits a homomorphism to $H_\Delta$.
	\end{enumerate}
	
\end{proposition}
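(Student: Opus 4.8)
\textbf{Proof proposal for \cref{pr:MinimalExample}.}
The plan is to handle the three parts in the order (\ref{prop:hdelta}), (\ref{prop:hdeltachrom}), (\ref{prop:hdeltamax}), since the first is used in the others and the "only if" direction of (\ref{prop:hdeltamax}) is where the real work lies. For (\ref{prop:hdelta}), I would exhibit $R_0, R_1$ explicitly: set $R_0 = V_0 \cup \{\dagger\}$ and $R_1 = V_1 \cup \{\dagger\}$ — or, since we need no edge between $R_0$ and $R_1$, rather $R_0 = V_0$ and $R_1 = V_1$, together with $\dagger$ placed into one of them; one checks directly from the definition of $H_\Delta$ that there is no edge joining a vertex of $V_0$ to a vertex of $V_1$ (all edges of $H_\Delta$ go within $V_0$, within $V_1$, within $P$, between $\dagger$ and $P$, or between $V_i$ and $P$). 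It then remains to verify that $H_\Delta \setminus R_0$ has chromatic number $\le \Delta-1$: this graph consists of $V_1 \cong K_{\Delta-1}$, the product $P = K_{\Delta-1}\times K_{\Delta-1}$ (whose chromatic number is $\Delta-1$, colored by the first coordinate), the vertex $\dagger$, and the edges between $V_1$ and $P$. I would give the explicit $(\Delta-1)$-coloring: color $(i,j)\in P$ by $i$, color $v\in V_1$ with label $j$ by any element of $[\Delta-1]\setminus\{$ the colors forbidden by its $P$-neighbors $\}$ — here one checks that $v$ with label $j$ is adjacent only to $(i,j')$ with $j'\ne j$, so the colors appearing on its neighborhood form a proper subset of $[\Delta-1]$ only if we are slightly careful; and color $\dagger$ by whatever is left after looking at $P$'s colors (since $\dagger$ sees all of $P$, all $\Delta-1$ colors appear, so this needs the observation that $\dagger$ has no other neighbors and we may recolor — this is the one subtle spot and may require $\dagger\in R_0\cap R_1$ or a small adjustment to the definition of which color class absorbs $\dagger$). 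Symmetrically for $R_1$.

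For (\ref{prop:hdeltachrom}), the upper bound $\chi(H_\Delta)\le 2\Delta-2$ is immediate from (\ref{prop:hdelta}) and the general fact (already proved in the text preceding \cref{t:formeasure}) that $\Delta$-(*) implies $\chi \le 2\Delta-2$. For the lower bound $\chi(H_\Delta)\ge 2\Delta-2$, I would argue by contradiction: suppose $c$ is a proper coloring with at most $2\Delta-3$ colors. Restricted to $P = K_{\Delta-1}\times K_{\Delta-1}$ together with $\dagger$, note that $\dagger$ is adjacent to all of $P$ and each "row" $\{(i,j): j\in[\Delta-1]\}$ and each "column" is a clique $K_{\Delta-1}$ — so $c$ on $P\cup\{\dagger\}$ is essentially a proper coloring of $K_\Delta$ on each row-plus-$\dagger$. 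The key combinatorial claim is that the edges from $V_0, V_1$ to $P$ force the two "color palettes" used on $V_0$ and on $V_1$ to be disjoint and each of size $\Delta-1$: a vertex of $V_0$ with label $i$ sees every $(i',j)$ with $i'\ne i$, i.e. all rows except row $i$, so its color must avoid the $\Delta-1$ colors that are "pinned" in a suitable sense. Making this precise — identifying the forced structure of $c$ on $P$ (I expect $c$ restricted to $P$ must, after relabeling, be the "first coordinate" coloring or close to it, because of the clique and $\dagger$ constraints) and then reading off that $V_0$ and $V_1$ need $\Delta-1$ fresh colors each, disjointly — is the crux of the lower bound and I expect this to be the main obstacle, requiring a careful case analysis of how a $(2\Delta-3)$-coloring could behave on $P\cup\{\dagger\}$.

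For (\ref{prop:hdeltamax}), the "if" direction is easy: property $\Delta$-(*) is evidently preserved under pullback along a homomorphism (if $G\to H_\Delta$ and $R_0,R_1$ witness $\Delta$-(*) for $H_\Delta$, their preimages witness it for $G$; chromatic numbers only drop under homomorphism and non-adjacency is preserved), so a homomorphism to $H_\Delta$ gives $G$ the property. The "only if" direction is the substantive one. Given $R_0, R_1 \subseteq V(G)$ witnessing $\Delta$-(*), fix proper $(\Delta-1)$-colorings $c_0: V(G)\setminus R_0 \to [\Delta-1]$ and $c_1: V(G)\setminus R_1 \to [\Delta-1]$. I would define a homomorphism $\varphi: G \to H_\Delta$ as follows: for $v \notin R_0\cup R_1$, send $v \mapsto (c_0(v), c_1(v)) \in P$; for $v \in R_0 \setminus R_1$, send $v \mapsto$ the vertex of $V_1$ with label $c_1(v)$; for $v \in R_1\setminus R_0$, send $v\mapsto$ the vertex of $V_0$ with label $c_0(v)$; and for $v \in R_0\cap R_1$, send $v\mapsto \dagger$. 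One then checks edge-preservation case by case using the adjacency structure built into $H_\Delta$: an edge inside $V(G)\setminus(R_0\cup R_1)$ maps into $P$ and is an edge of $P$ because $c_0$ and $c_1$ each change along it; an edge from $V(G)\setminus R_0$ to $R_0$ lands between $P\cup V_0$ and $V_1\cup\{\dagger\}$ and the definitions of the $V_i$–$P$ edges and the $\dagger$–$P$ edges were reverse-engineered precisely so these work (this explains the otherwise mysterious "$i'\ne i$" in the construction of $H_\Delta$); no edge joins $R_0$ to $R_1$, so the potentially-problematic $V_0$–$V_1$ non-adjacency is never tested; edges within $R_0$ (resp. $R_1$) — note such an edge cannot join $R_0\cap R_1$ to itself since $\dagger$ would then need a loop, but $R_0\cap R_1$ is independent as a subset of both $R_0$ and... actually one must check $R_0\cap R_1$ is independent, which follows because there is no edge between $R_0$ and $R_1$ at all, in particular none within $R_0\cap R_1$. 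I would conclude by assembling these verifications; the main thing to double-check is that every edge type of $G$ is covered and lands on a genuine edge of $H_\Delta$, which is a finite but slightly delicate bookkeeping task. \qedhere
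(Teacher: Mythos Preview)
Your overall architecture matches the paper's, and for part~(\ref{prop:hdeltamax}) your map $\varphi$ is exactly the paper's $\Theta$; the case-by-case verification you outline is precisely what is done there. Two places need correction.

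\textbf{Part~(\ref{prop:hdelta}).} The right choice is $R_0=V_0\cup\{\dagger\}$, $R_1=V_1\cup\{\dagger\}$ (your first instinct). There is no edge between these since $\dagger$ only touches $P$. However, your proposed $(\Delta-1)$-coloring of $H_\Delta\setminus R_0=V_1\cup P$ does not work: if you color $(i,j)\in P$ by its \emph{first} coordinate $i$, then a vertex $v\in V_1$ with label $j$ is adjacent to $(i,j')$ for every $i\in[\Delta-1]$ and every $j'\ne j$, so it sees \emph{all} $\Delta-1$ colors and cannot be colored. The fix is to color $(i,j)$ by the \emph{second} coordinate $j$ and color $v\in V_1$ by its own label $j$: then $v$'s $P$-neighbors are colored in $\{j':j'\ne j\}$, and edges inside $V_1\cong K_{\Delta-1}$ are properly colored as well. (Symmetrically, for $H_\Delta\setminus R_1$ one colors by the first coordinate.)

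\textbf{Part~(\ref{prop:hdeltachrom}), lower bound.} Your expectation that a hypothetical $(2\Delta-3)$-coloring $c$ must make $c\restriction P$ ``close to the first-coordinate coloring'' is not correct, and the argument the paper runs is different. The key observations are: (i)~since $\dagger$ is adjacent to all of $P$, $c$ uses at most $2\Delta-4$ colors on $P$; (ii)~one cannot have both a rainbow row and a rainbow column in $P$ (their union would already use $2\Delta-3$ colors, since row $i$ and column $j$ are pairwise color-distinct when both are rainbow), so WLOG every row $i$ contains a repeated color $\alpha_i$ at positions $(i,j_i),(i,j'_i)$ with $j_i\ne j'_i$; (iii)~the $\alpha_i$ are pairwise distinct, because for $i\ne i'$ at least one of $j_{i'},j'_{i'}$ differs from $j_i$, making the corresponding vertices adjacent; (iv)~any $v\in V_1$ with label $j$ is adjacent to at least one of $(i,j_i),(i,j'_i)$ for each $i$ (since $j_i\ne j'_i$ forces one of them to differ from $j$), so $c(v)\notin\{\alpha_i:i\in[\Delta-1]\}$. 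Hence $V_1\cong K_{\Delta-1}$ needs $\Delta-1$ further colors disjoint from $\{\alpha_i\}$, giving $\ge 2\Delta-2$ colors in total, a contradiction. Your sketch does not contain steps (ii)--(iv), which is where the argument actually lives.
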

\begin{proof}
	\eqref{prop:hdelta} Set $R_0=V(V_0)\cup \{\dagger\}$ and $R_1=V(V_1)\cup \{\dagger\}$.
	By the definition there are no edges between $R_0$ and $R_1$.
	Consider now, e.g., $V(H_\Delta)\setminus R_0$.
	Let $A_j$ consist of all elements in $P$ that have second coordinate equal to $j$ together with the vertex in $V_1$ that has the label $j$.
	By the definition, the set $A_i$ is independent and $\bigcup_{i\in [\Delta-1]}A_i$ covers $H_\Delta\setminus R_0$, and similarly for $R_1$.
	
	\eqref{prop:hdeltachrom} First we show that $\chi(H_\Delta)\le 2\Delta-2$.
	Observe that there is no edge between $R_0 \setminus R_1$ and $R_0 \cap R_1$, as there is no edge between $R_0$ and $R_1$.
	It follows that the chromatic number of the induced subgraph of $H_\Delta$ to $R_0$ is $\Delta-1$.
	The desired $2\Delta-2$ coloring of $H_\Delta$ is then defined as the disjoint union of the $\Delta-1$-colorings of $R_0$ and $V(H) \setminus R_0$.
	
	Next we show that $\chi(H_\Delta)\geq 2\Delta-2$.
	Towards a contradiction, assume that $c$ is a coloring of $H_\Delta$ with $<2\Delta-2$-many colors. It follows that  $|c(V(P))|\leq 2\Delta-4$, and also $\Delta-1 \leq |c(V(P))|$.
	
	First we claim that there are no indices $i,j\in [\Delta-1]$ (even with $i=j$) such that $c(i,r)\not=c(i,s)$ and $c(r,j)\not=c(s,j)$ for every $s\not=r$: indeed, otherwise, by the definition of $P$ we had $c(i,r)\not=c(s,j)$ for every $r,s$ unless $(i,r)=(s,j)$, which would the upper bound on the size of $c(V(P))$.
	
	Therefore, without loss of generality, we may assume that for every $i\in [\Delta-1]$ there is a color $\alpha_i$ and two indices $j_i \neq j'_i$ such that $c(i,j_i)=c(i,j'_i)=\alpha_i$. It follows form the definition of $P$ and $j_i \neq j'_i$ that
	$\alpha_i\not=\alpha_{i'}$ whenever $i\not= i'$. 
	
	Moreover, note that any vertex in $V_1$ is connected to at least one of the vertices $(i,j_i)$ and $(i,j'_i)$, hence none of the colors $\{\alpha_i\}_{i \in [\Delta-1]}$ can appear on $V_1$. Consequently, since $V_1$ is isomorphic to $K_{\Delta-1}$ we need to use at least $\Delta-1$ additional colors, a contradiction.

	\eqref{prop:hdeltamax} First note that if $G$ admits a homomorphism into $H_\Delta$, then the pullback of the sets $R_0$, $R_1$ and $A_i$ for $i\in [\Delta-1]$ witnesses that $G$ is almost $\Delta$-colorable.
	
	Conversely, let $G$ be almost $\Delta$-colorable.
	Fix the corresponding sets $R_0,R_1$ together with $(\Delta-1)$-colorings $c_0,c_1$ of their complements.
	We construct a homomorphism $\Theta$ from $G$ to $H_\Delta$. Let
	\[\Theta(v)=
	\begin{cases} \dagger &\mbox{if } v \in R_0 \cap R_1, \\
	c_0(v) & \mbox{if } v \in R_1 \setminus R_0, \\
	c_1(v) & \mbox{if } v \in R_0 \setminus R_1,
	\\
	(c_0(v),c_1(v)) & \mbox{if } v \not \in R_0 \cup R_1.
	\end{cases} \]
	
	Observe that $R=R_0\cap R_1$ is an independent set such that there is no edge between $R$ and $R_0 \cup R_1$.
	Using this observation, one easily checks case-by-case that $\Theta$ is indeed a homomorphism.
\end{proof}

\begin{remark}
	It can be shown that for $\Delta=3$ both the Chv\' atal and Gr\" otsch graphs are almost $\Delta$-colorable.
\end{remark}

\begin{proof}[Proof of \cref{thm:MainBorel}]
	Note that it is enough to show the existence of such $H$ and $\mathcal{G}$ with $\chi(H)=2\Delta-2$. Indeed, since erasing a vertex decreases the chromatic number by at most $1$, we can produce subgraphs of $H$ with chromatic number exactly $\ell$ for each $\ell \leq 2\Delta-2$.
	
	By \cref{pr:MinimalExample}, the graph $H_\Delta$ is almost $\Delta$-colorable and has chromatic number $2\Delta-2$.
	Then it is easy to see that taking the target graph $\fH$ as in \cref{cl:Bairecat} gives the conclusion by \cref{pr:hypfin} and \cref{t:formeasure}.
\end{proof}

\begin{remark}
	Interestingly, recent results connected to counterexamples to Hedetniemi's conjecture yield \cref{thm:MainBorel} asymptotically, as $\Delta \to \infty$. 
	\label{r:hedet}Recall that Hedetniemi's conjecture is the statement that if $G,H$ are finite graphs then $\chi(G \times H)=\min\{\chi(G),\chi(H)\}$. This conjecture has been recently disproven by Shitov \cite{shitov}, and strong counterexamples have been constructed later (see, \cite{tardif2019note,zhu2021note}). We claim that these imply for $\varepsilon>0$ the existence of finite graphs $H$ with $\chi(H) \geq (2-\varepsilon)\Delta$ to which $\Delta$-regular Borel forests cannot have, in general, a Borel homomorphism, for every large enough $\Delta$. Indeed, if a $\Delta$-regular Borel forest admitted a Borel homomorphism to each finite graph of chromatic number at least $(2-\varepsilon)\Delta$, it would have such a homomorphism to their product as well. Thus, we would obtain that the chromatic number of the product of any graphs of chromatic number $(2-\varepsilon)\Delta$ is at least $\Delta+1$. This contradicts Zhu's result \cite{zhu2021note}, which states that the chromatic number of the product of graphs with chromatic number $n$ can drop to $\approx \frac{n}{2}$.
\end{remark}

\section{Remarks and further directions}

\label{s:problems}

Since the construction of homomorphism graphs is rather flexible, we expect that this method will find further applications.
A direction that we do not take in this paper is to investigate homomorphism graphs corresponding to countable groups other than $\mathbb{Z}^{*\Delta}_2$. Another possible direction could be to understand the connection of our method with hyperfiniteness. 


While \cref{t:mainc} is optimal in the Borel context, one might hope that there is a positive answer in the case of graphs arising as compact, free subshifts of $2^{\mathbb{Z}^{*\Delta}_2}$. 

\begin{question}
	Is there a characterization of Borel graphs with Borel chromatic number $\leq \Delta$ that are compact, free subshifts of the left-shift action of $\mathbb{Z}^{*\Delta}_2$ on $2^{\mathbb{Z}^{*\Delta}_2}$? 
\end{question}
A way to answer this question on the negative would be to extend the machinery developed in \cite{seward2016borel} or \cite{Bernshteyn2021local=cont}, so that the produced equivariant maps preserve the Borel chromatic number, their range is compact, and then apply \cref{t:mainc}; however, this seems to require a significant amount of new ideas.

Let us point out that \cref{t:implication} has a particularly nice form, if we assume Projective Determinacy or replace the Axiom of Choice with the Axiom of Determinacy (see, e.g., \cite{trichotomy} for related results). 

\begin{theorem}
	Let $\Delta>2$.
	\begin{itemize}
		\item ($\PD$) Let $\mathcal{H}$ be a locally countable Borel graph. Then 
		\[\chi_{pr}(\mathcal{H})>\Delta \iff \chi_{pr}(\Homac(T_\Delta,\mathcal{H}))>\Delta,\]
		where $\chi_{pr}$ stands for the projective chromatic number of $\mathcal{H}$.
		\item ($\AD+\DC$) Let $\mathcal{H}$ be a locally countable graph on a Polish space. Then 
		\[\chi(\mathcal{H})>\Delta \iff \chi(\Homac(T_\Delta,\mathcal{H}))>\Delta.\]
	\end{itemize}
\end{theorem}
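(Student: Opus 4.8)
The plan is to deduce both items of the theorem from \cref{t:implication} together with its easy converse. Recall first that the converse implication $\chi(\Homac(T_\Delta,\mathcal{H}))>\Delta \implies \chi(\mathcal{H})>\Delta$ is essentially \cref{pr:chromatic}: the map $\Root$ is a homomorphism from $\Homac(T_\Delta,\mathcal{H})$ onto (a subgraph of) $\mathcal{H}$, so any coloring of $\mathcal{H}$ pulls back to a coloring of $\Homac(T_\Delta,\mathcal{H})$, and this pullback preserves whatever regularity class we are working in (projective, or no restriction at all). Hence in both items the hard direction is the forward one, and it is exactly the content of (the proof of) \cref{t:implication}; what changes is only the ambient class of sets for which determinacy is available.

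For the first item, I would revisit the proof of \cref{t:implication} and observe that the only places where ``Borel'' and ``weakly provably $\mathbf{\Delta}^1_2$'' were used are: (i) Borel determinacy, to guarantee that one of the players wins each game $\mathbb{G}(x,i)$; (ii) \cref{l:gamesection}/\cref{l:measurability}, to conclude that the set of $(x,i)$ for which $\PI$ wins lies in a class with the right regularity properties, so that the coloring $d$ of \eqref{e:coloring} is again in that class; and (iii) the algebra property of that class. Under $\PD$, if $\mathcal{H}$ has a projective edge relation and $c$ is a projective anti-game labeling, then the games $\mathbb{G}(x,i)$ have projective payoff sets, so by Projective Determinacy one of the players wins each one, and by the standard closure properties of the projective hierarchy the resulting coloring $d$ is projective. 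Running the rest of the argument verbatim (Feldman--Moore, the combination-of-strategies claims \cref{cl:PI} and the two-strategies-against-each-other trick) yields: $\chi_{pr}(\mathcal{H})>\Delta \implies \chi_{pr}(\Homac(T_\Delta,\mathcal{H}))>\Delta$. Together with the converse this gives the stated equivalence. One point to state carefully is that we also need to handle the cyclic part as in \cref{l:cyclic}; but that lemma produces a \emph{Borel} anti-game labeling with no hypothesis, so it goes through unchanged.

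For the second item, under $\AD+\DC$ every set of reals is determined and has the Baire property, is measurable, etc., so there is no regularity class to track at all: given \emph{any} $\Delta$-coloring $c$ of $\Homac(T_\Delta,\mathcal{H})$ (equivalently, any anti-game labeling of $V(\HOM(T_\Delta,\mathcal{H}))$, again combining with the Borel labeling of \cref{l:cyclic}), the games $\mathbb{G}(x,i)$ are determined by $\AD$, the coloring $d$ of $\mathcal{H}$ defined in \eqref{e:coloring} exists (using $\DC$ to select winning strategies and to run the Feldman--Moore coding, which needs only countable choice), and the final contradiction --- producing adjacent $x,x'$ with $d(x)=d(x')=i$ and homomorphisms $h,h'$ with $\alpha_i\cdot h = h'$ and $c(h)=c(h')=i$ --- is obtained exactly as before. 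Thus $\chi(\mathcal{H})>\Delta \implies \chi(\Homac(T_\Delta,\mathcal{H}))>\Delta$ with no definability restriction, and the converse is trivial.

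The main obstacle, such as it is, is bookkeeping rather than a new idea: one must check that the proof of \cref{t:implication} is genuinely ``axiom-agnostic'' in the sense that every use of Borelness/regularity can be located and replaced by the relevant determinacy/closure input, and in particular that the role of $\DC$ (and only countable choice) in the $\AD$ setting is exactly what is needed for Feldman--Moore and for the simultaneous choice of strategies over the vertices of $\mathcal{H}$ --- a standard but necessary remark since $\AD$ is incompatible with full $\mathsf{AC}$. I would therefore present this theorem's proof as a short annotation of the proof of \cref{t:implication}, pointing to the three items (i)--(iii) above and to the trivial converse via \cref{pr:chromatic}, rather than reproducing the argument in full.
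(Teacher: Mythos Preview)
The paper does not actually prove this theorem: it is stated without proof in the concluding section, prefaced only by the sentence ``Let us point out that \cref{t:implication} has a particularly nice form, if we assume Projective Determinacy or replace the Axiom of Choice with the Axiom of Determinacy.'' Your proposal is exactly the argument the authors have in mind: the easy direction is \cref{pr:chromatic}, and the hard direction is obtained by rerunning the proof of \cref{t:implication} with the relevant determinacy hypothesis in place of Borel determinacy and the corresponding closure properties in place of the weakly provably $\mathbf{\Delta}^1_2$ machinery. Your identification of the three ingredients (determinacy of the games, definability of the ``who wins'' set, and the algebra/closure property) is accurate, and your handling of the cyclic part via \cref{l:cyclic} is correct since that lemma is purely Borel.

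One minor comment on the $\AD+\DC$ item: you invoke Feldman--Moore to encode the games over $\mathbb{N}$, but note that in this item $\mathcal{H}$ is not assumed Borel, so Feldman--Moore in its usual form is not available. This is harmless, however, because no uniformity is needed here: for each fixed $x$ the connected component is countable, a single enumeration (available under $\DC$, in fact under countable choice) turns $\mathbb{G}(x,i)$ into a game on $\mathbb{N}$, and $\AD$ then gives determinacy; the map $d$ is defined pointwise with no definability requirement, and the final ``two strategies against each other'' step only requires choosing two strategies, which needs no choice principle at all. You essentially say this, but it would be cleaner to drop the reference to Feldman--Moore in the $\AD$ case rather than to justify it.
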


\subsection*{Acknowledgements} 
We would like to thank Anton Bernshteyn, Mohsen Ghaffari, Steve Jackson, Alex Kastner, Alexander Kechris, Yurii Khomskii, Clark Lyons, Andrew Marks, Oleg Pikhurko, Brandon Seward, Jukka Suomela, and Yufan Zheng for insightful discussions.

\bibliographystyle{alpha}
\bibliography{ref}

\end{document}